\definecolor{mycolor}{HTML}{F7F8E0}
\definecolor{myorange}{RGB}{245,156,74}
\definecolor{cadetgrey}{rgb}{0.57, 0.64, 0.69}
\definecolor{calpolypomonagreen}{rgb}{0.12, 0.3, 0.17}
\newcommand\cyr{%
\renewcommand\rmdefault{wncyr}%
\renewcommand\sfdefault{wncyss}%
\renewcommand\encodingdefault{OT2}%
\normalfont
\selectfont}
\DeclareTextFontCommand{\textcyr}{\cyr}
\newcommand{\Mod}[1]{\ (\text{mod}\ #1)}
\numberwithin{equation}{section}
\newtheorem{thm}{Theorem}[section]
\newtheorem{cor}[thm]{Corollary}
\newtheorem{lem}[thm]{Lemma}
\newtheorem{prop}[thm]{Proposition}
\newtheorem{assu}[thm]{Assumption}
\theoremstyle{definition}
\newtheorem{defn}[thm]{Definition}
\newtheorem{rem}[thm]{Remark}
\newcommand{\ks}{\boldsymbol{\kappa}}
\newcommand{\lambdab}{\boldsymbol{\lambda}}
\newcommand{\kn}{\widetilde{\boldsymbol{\delta}}}
\newcommand{\widedelta}{\widetilde{\delta}}
\begin{document}
\title{A higher Gross--Zagier formula and the structure of Selmer groups}
%\title{A ``derived" Perrin-Riou's conjecture: Kato's zeta elements and Heegner points}
\author{Chan-Ho Kim}
\address{Center for Mathematical Challenges, Korea Institute for Advanced Study, 85 Hoegiro, Dongdaemun-gu, Seoul 02455, Republic of Korea}
\email{chanho.math@gmail.com}
\date{\today}
\subjclass[2010]{11F67, 11G40, 11R23}
\keywords{refined Iwasawa theory, Kato's Kolyvagin systems, Kurihara numbers, modular symbols, Heegner point Kolyvagin systems, bipartite Euler systems}
\begin{abstract}
We describe a Kolyvagin system-theoretic refinement of Gross--Zagier formula by comparing Heegner point Kolyvagin systems with Kurihara numbers when the root number of a rational elliptic curve $E$ over an imaginary quadratic field $K$ is $-1$.
When the root number of $E$ over $K$ is 1, we first establish the structure theorem of the $p^\infty$-Selmer group of $E$ over $K$.
The description is given by the values of certain families of quaternionic automorphic forms, which is a part of bipartite Euler systems.
By comparing bipartite Euler systems with Kurihara numbers, we also obtain an analogous refinement of Waldspurger formula.
No low analytic rank assumption is imposed in both refinements.

We also prove the equivalence between the non-triviality of various ``Kolyvagin systems" and the corresponding main conjecture localized at the augmentation ideal.
As consequences, we obtain new applications of (weaker versions of) the Heegner point main conjecture and the anticyclotomic main conjecture to the structure of $p^\infty$-Selmer groups of elliptic curves of arbitrary rank. In particular, the Heegner point main conjecture localized at the augmentation ideal implies the strong rank one $p$-converse to the theorem of Gross--Zagier and Kolyvagin.

%Let $E$ be a rational elliptic curve, $p \geq 5$ a good reduction prime for $E$, and $K$ an imaginary quadratic field. 
%We completely determine the structure of the $p^\infty$-Selmer group of $E$ over $K$ in terms of Kurihara numbers under mild assumptions.
%When the root number of $E$ over $K$ is $-1$, we obtain an explicit comparison between the Heegner point Kolyvagin system for $E$ over $K$ and the Kurihara numbers for $E$ and the quadratic twist $E^K$ of $E$ by $K$.
%When the root number of $E$ over $K$ is 1, we first establish the structure theorem of the Selmer group of $E$ over $K$ in terms of the values of families of quaternionic automorphic forms at CM points and prove another explicit comparison between these values and Kurihara numbers for $E$ and $E^K$.
%These comparisons can be regarded as structural refinements of Gross--Zagier formula (or Perrin-Riou's conjecture) and Waldspurger formula for elliptic curves, respectively.
\end{abstract}
\maketitle

\setcounter{tocdepth}{1}
\tableofcontents
\section{Introduction}
\subsection{A ``higher" Gross--Zagier formula}
The cerebrated Gross--Zagier formula \cite{gross-zagier-original} is a remarkable formula relating the first central derivatives of Rankin--Selberg $L$-functions with the heights of Heegner points, and it has played an important role to study the Birch and Swinnerton-Dyer conjecture.
The Waldspurger formula \cite{waldspurger} also presents a beautiful relation between the Rankin--Selberg $L$-values and certain toric period integrals, which covers the complement to the Gross--Zagier formula in terms of the sign of the functional equation.
These formulas have been enormously extended to
Shimura curves over totally real fields \cite{yuan-zhang-zhang} and various higher dimensional Shimura varieties in the context of the arithmetic Gan--Gross--Prasad conjecture \cite{wei-zhang-arith-fund-lem, wei-zhang-weil-repns-afl} when the rank is at most one (in an appropriate sense). 

When the rank is larger than one, the situation becomes extremely puzzled.
The only known work is essentially  Yun--Zhang's ground-breaking work on the Gross--Zagier formula for the function field case \cite{yun-zhang-higher-gross-zagier, yun-zhang-higher-gross-zagier-2} and its subsequent work.
It seems that no similar result for the number field case is addressed yet.

In this article, we present certain \emph{structural refinements} of  Gross--Zagier formula and Waldspurger formula for elliptic curves over the rationals from the viewpoint of Kolyvagin systems and the structure of Selmer groups ($\S$\ref{subsec:structural-gross-zagier} and $\S$\ref{subsec:structural-waldspurger}).
In particular, no low rank assumption is imposed.
A simple consequence of these structural refinements is stated in Theorems \ref{thm:main-higher-gross-zagier-easy} and \ref{thm:main-higher-waldspurger-easy} as ``higher" Gross--Zagier and Waldspurger formulas.

When the Heegner hypothesis is satisfied, the structural Gross--Zagier formula compares certain modular symbols of an elliptic curve and its quadratic twist with the corresponding Heegner point Kolyvagin system.

When the Heegner hypothesis breaks down, the structural Waldspurger formula compares the same modular symbols with the values of families of quaternionic automorphic forms at CM points.

These comparisons are purely algebraic and strongly inspired by refined Iwasawa theory \cite{kurihara-munster, kurihara-iwasawa-2012}, especially, the structure theorems of Selmer groups via the Kolyvagin system method \cite{kolyvagin-selmer, mazur-rubin-book}. This is completely different from the relative trace formula approach to the arithmetic Gan--Gross--Prasad conjecture.
It is remarkable that our approach fully captures the structure of Selmer groups without the low analytic rank assumption and Kolyvagin \emph{derivatives} of certain Mazur--Tate elements are involved instead of \emph{derivatives} of Rankin--Selberg $L$-functions.
\subsection{From Euler systems to Kolyvagin systems} \label{subsec:intro-from-euler-to-kolyvagin}
In the arithmetic of elliptic curves, the importance of the theory of Euler systems cannot be overestimated.
In particular, two completely different Euler systems are extensively studied to understand Selmer groups of elliptic curves: Kato's Euler systems \cite{kato-euler-systems} and Heegner points \cite{kolyvagin-euler-systems}.
The following table summarizes their differences.
\begin{center}
\begin{tabular}{ |m{0.25\textwidth}|m{0.3\textwidth}|m{0.3\textwidth}| } 
 \hline
  & Kato's Euler systems & Heegner points \\ 
 \hline
field variation & abelian extensions of the rationals (``cyclotomic")& ring class extensions of imaginary quadratic fields (``anticyclotomic") \\ 
 \hline
 local condition at $p$ & the $p$-relaxed condition & the classical condition \\ 
 \hline
bounded Selmer groups & the $p$-strict Selmer groups over the rationals & the Selmer groups over  imaginary quadratic fields \\ 
 \hline
 additional hypothesis &  & the Heegner hypothesis \\
 \hline
\end{tabular}
\end{center}
Although the natures of these two Euler systems are very different, B. Perrin-Riou  \cite{perrin-riou-rational-pts}  formulated a conjecture making the connection between the bottom class of Kato's Euler system and the bottom class of the Heegner point Euler system when the analytic rank is one, and it is recently settled \cite{bertolini-darmon-venerucci, burungale-skinner-tian-wan, kazim-pollack-sasaki}.
Except these bottom classes, we do \emph{not} expect the existence of a more general comparison between Kato's Euler systems and Heegner point Euler systems since their field variations are \emph{disjoint} except the base imaginary quadratic field.
However, we still hope that Kato's \emph{Kolyvagin systems} and Heegner point \emph{Kolyvagin systems} can be compared, and we provide an explicit relation between their divisibilities by using the modular symbol interpretation of Kato's Kolyvagin systems, namely Kurihara numbers.
This is possible because Kurihara numbers determine the structure of Selmer groups over the rationals \cite{kurihara-munster, kurihara-iwasawa-2012, kim-structure-selmer} and the Heegner point Kolyvagin system also determines the structure of Selmer groups over imaginary quadratic fields under certain assumptions \cite{kolyvagin-selmer}. 

On the way to prove the structural Gross--Zagier formula, we also reveal a \emph{deeper} relation between Kato's zeta elements and Heegner points
and it can be viewed as a structural refinement of Perrin-Riou's conjecture (Remark \ref{rem:structural-perrin-riou}).
The following picture summarizes the context.
{\scriptsize
\[
\xymatrix{
\txt{Kato's Euler systems\\for $E$ and $E^K$} \ar@{~>}[d]_-{\txt{cyclotomic \\ Kolyvagin derivatives}} & \txt{Heegner point Euler systems\\for $E$ over $K$} \ar@{~>}[d]^-{\txt{anticyclotomic \\ Kolyvagin derivatives}} \\
\txt{Kato's Kolyvagin systems\\for $E$ and $E^K$} \ar[d]_-{\txt{dual exponential}} \ar@{<~>}[r]^-{\txt{structural \\ Perrin-Riou}} & \txt{Heegner point \\ Kolyvagin systems} \ar[d]^-{\txt{determine \\ the structure}} \ar@{<~>}[dl]_-{\txt{structural \\ Gross--Zagier}} \\
\txt{Kurihara numbers\\for $E$ and $E^K$} \ar[r]_-{\txt{determine \\ the structure}} & 
{ \mathrm{Sel}(K, E[p^\infty])  \simeq  \mathrm{Sel}(\mathbb{Q}, E[p^\infty]) \oplus  \mathrm{Sel}(\mathbb{Q}, E^K[p^\infty]) }
}
\]
}
where $p \geq 5$ is a prime, $E$  is a rational elliptic curve, $K$ is an imaginary quadratic field, and $E^K$ is the quadratic twist of $E$ by $K$ satisfying the (generalized) Heegner hypothesis.
\subsection{Bipartite Euler systems and the structure of Selmer groups} \label{subsec:intro-bipartite}
When the Heegner hypothesis is not satisfied, the Heegner point Kolyvagin system method is certainly not available.
In the extremely inspiring paper \cite{bertolini-darmon-imc-2005}, Bertolini--Darmon developed a completely different Euler system argument involving the systematic use of rank lowering congruences and proved one-sided divisibility of the anticyclotomic Iwasawa main conjecture for elliptic curves.
The theory of bipartite Euler systems \cite{howard-bipartite} captures the essence of this Euler system argument axiomatically.
We push the bipartite Euler system method further to obtain the description of the structure of Selmer groups over imaginary quadratic fields in terms of the values of level-raising families of quaternionic automorphic forms (Theorem \ref{thm:structure-bipartite}). This is the counterpart of Kolyvagin's work on the structure of Selmer groups mentioned above.
As a consequence,  we obtain the structural Waldspurger formula as in the following picture again
{\scriptsize
\[
\xymatrix{
\txt{Kato's Kolyvagin systems\\for $E$ and $E^K$} \ar[d]_-{\txt{dual exponential}}  & \txt{bipartite Euler systems \\ (values of quaternionic forms)} \ar[d]^-{\txt{determine \\ the structure \\ (when it is free)}} \ar@{<~>}[dl]_-{\txt{structural \\ Waldspurger}} \\
\txt{Kurihara numbers\\for $E$ and $E^K$} \ar[r]_-{\txt{determine \\ the structure}} & 
{ \mathrm{Sel}(K, E[p^\infty])  \simeq  \mathrm{Sel}(\mathbb{Q}, E[p^\infty]) \oplus  \mathrm{Sel}(\mathbb{Q}, E^K[p^\infty]) }
}
\]
}
with the same notation above but with the opposite root number.
In this sense, we confirm that bipartite Euler systems behave like Kolyvagin systems rather than Euler systems, as pointed out in \cite[Introduction]{bertolini-darmon-imc-2005} and \cite[$\S$1]{howard-bipartite}.
\subsection{The non-triviality questions on various ``Kolyvagin systems"}
Although various structure theorems are discussed above, the non-triviality of the corresponding Kolyvagin system is \emph{required} for each structure theorem.
When Kolyvagin proved his structure theorem \cite{kolyvagin-selmer}, he assumed the non-triviality of Heegner point Kolyvagin systems, and formulated it as a conjecture.
Later, Wei Zhang proved this conjecture for a large class of elliptic curves \cite{wei-zhang-mazur-tate} by using the Iwasawa main conjecture resolved by Skinner--Urban \cite{skinner-urban}.

The non-triviality of Kato's Kolyvagin systems and the collection of Kurihara numbers are also required for Mazur--Rubin's structure theorem \cite{mazur-rubin-book} and Kurihara's structure theorem \cite{kurihara-munster, kurihara-iwasawa-2012, kim-structure-selmer}, respectively.
In \cite{kim-structure-selmer}, we found a method to establish the non-triviality of Kato's Kolyvagin systems and the collection of Kurihara numbers by making connection with the Iwasawa main conjecture.

By extending this idea, we show that the non-triviality of Heegner point Kolyvagin systems and bipartite Euler systems follows from a \emph{small piece of} the corresponding main conjectures, respectively (Theorem \ref{thm:non-triviality-total}).
As a result, we have a plenty of examples satisfying all the required non-triviality statements.
In particular, we obtain a more direct and simplified proof of the Heegner point main conjecture \cite{burungale-castella-kim}, and we refine the standard argument to deduce the strong rank one $p$-converse to the theorem of Gross--Zagier and Kolyvagin from the Heegner point main conjecture (Corollary \ref{cor:non-triviality-heegner}).

\subsection*{Acknowledgement}
The author thanks to K\^{a}z{\i}m B\"{u}y\"{u}kboduk and Naomi Sweeting for helpful discussions and Francesc Castella for pointing out a gap in an earlier version. 
The encouraging discussion with Giada Grossi was very helpful.
We would also like to heartily thank for anonymous referee for a very careful reading of the paper and pointing out important mistakes in earlier versions.

This research was partially supported 
by a KIAS Individual Grant (SP054103) via the Center for Mathematical Challenges at Korea Institute for Advanced Study and
by the National Research Foundation of Korea(NRF) grant funded by the Korea government(MSIT) (No. 2018R1C1B6007009).

\section{Statement of the main results}
\subsection{Working hypotheses} \label{subsec:working-hypotheses}
Let $E$ be a non-CM elliptic curve over $\mathbb{Q}$ of conductor $N$ and $p \geq 5$ a prime such that
\begin{itemize}
\item[(a)] the mod $p$ Galois representation $\overline{\rho} : \mathrm{Gal}(\overline{\mathbb{Q}}/\mathbb{Q}) \to \mathrm{Aut}_{\mathbb{F}_p}(E[p])$ is surjective, and
\item[(b)] the Manin constant is prime to $p$.
\end{itemize}
It is expected that (b) is always true and it actually holds if $E$ has semi-stable reduction at $p$ \cite[Corollary 4.1]{mazur-rational-isogenies}.

Let $K$ be an imaginary quadratic field of discriminant $D_K$ such that $(D_K, Np) = 1$, $D_K$ is odd and $\neq -3$.
Write $$N = N^+ \cdot N^-$$
where a prime divisor of $N^+$ splits in $K/\mathbb{Q}$ and 
a prime divisor of $N^-$ is inert in $K/\mathbb{Q}$.
For a square-free integer $M$, denote by $\nu(M)$ the number of prime divisors of $M$.
We always assume that 
\begin{itemize}
\item[(c)] $N^-$ is square-free,
\item[(d)] if $\nu(N^-)$ is odd, then $E$ has good reduction at $p$, and
\item[(e)] if $\nu(N^-)$ is odd and a prime $q$ satisfies $q \vert N^-$ and $q \equiv \pm 1 \pmod{p}$, then $\overline{\rho}$ is ramified at $q$. (\emph{Condition CR})
%\item if $\nu(N^-)$ is odd and a prime $q$ satisfies $q^2 \vert N$ and $q \equiv - 1 \pmod{p}$, then the restriction of $\overline{\rho}$ to the inertia group at $\ell$ is irreducible.
\end{itemize}
Hypotheses (a) and (b) also hold for the quadratic twist $E^K $of $E$ by $K$.

Let $T$ be the $p$-adic Tate module of $E$ and $\mathcal{F}_{\mathrm{cl}}$ the classical Selmer structure for $T$.
\subsection{Kurihara numbers} \label{subsec:kurihara-numbers-invariants}
Let $k \geq 1$ be an integer,
$$\mathcal{P}^{\mathrm{cyc}}_k = \left\lbrace q, \textrm{  a prime} : q \nmid Np, q \equiv 1 \Mod{p^k}, a_q(E) \equiv q +1 \Mod{p^k}  \right\rbrace,$$
and write $I^{\mathrm{cyc}}_q = (q - 1, a_q(E) - q -1 ) \mathbb{Z}_p$ for $q \in \mathcal{P}^{\mathrm{cyc}}_k$.
Let $\mathcal{N}^{\mathrm{cyc}}_k$ be the set of square-free products of the primes in $\mathcal{P}^{\mathrm{cyc}}_k$.
For $n \in \mathcal{N}^{\mathrm{cyc}}_k$, write $I^{\mathrm{cyc}}_n = \sum_{q \vert n} I^{\mathrm{cyc}}_q$.

The collection of Kurihara numbers for $E$ is denoted by
$$\kn(E) = \kn = \left\lbrace \widedelta_n \in \mathbb{Z}_p/ I^{\mathrm{cyc}}_n \mathbb{Z}_p \right\rbrace_{n \in \mathcal{N}^{\mathrm{cyc}}_1}$$
and each $\delta_n$ is explicitly built out from modular symbols. See $\S$\ref{subsec:kurihara-numbers} for the precise definition.
The numerical invariants associated to $\kn$ are defined as follows
\begin{align*}
\mathrm{ord} (\kn) & = \mathrm{min} \left\lbrace \nu(n) : n \in \mathcal{N}^{\mathrm{cyc}}_1, \widetilde{\delta}_n \neq 0 \right\rbrace , \\
\mathrm{ord}_p ( \widetilde{\delta}_n ) & = 
\mathrm{max} \left\lbrace j : \widetilde{\delta}_n \in p^j \mathbb{Z}_p/I^{\mathrm{cyc}}_n\mathbb{Z}_p  \right\rbrace , \\
\partial^{(i)}(\widetilde{\boldsymbol{\delta}}) & = 
\mathrm{min} \left\lbrace \mathrm{ord}_p ( \widetilde{\delta}_n ) :  n \in \mathcal{N}^{\mathrm{cyc}}_1 \textrm{ with } \nu(n) = i  \right\rbrace , \\
\partial^{(\infty)}( \kn  ) & = \mathrm{min} \left\lbrace \partial^{(i)}( \kn  )  : i \geq 0 \right\rbrace .
\end{align*}
\subsection{Heegner point Kolyvagin systems}
Suppose that $\nu(N^-)$ is even.
Let $k \geq 1$ be an integer,
$$\mathcal{P}^{\mathrm{ac}}_k = \left\lbrace q, \textrm{  a prime} : q \nmid Np, q \textrm{ inert in } K/\mathbb{Q},  a_q(E) \equiv q +1 \equiv 0 \Mod{p^k}  \right\rbrace,$$
and write $I^{\mathrm{ac}}_q = (q +1, a_q(E)  ) \mathbb{Z}_p$ for $q \in \mathcal{P}^{\mathrm{ac}}_k$.
Let $\mathcal{N}^{\mathrm{ac}}_k$ be the set of square-free products of the primes in $\mathcal{P}^{\mathrm{ac}}_k$.
For $n \in \mathcal{N}^{\mathrm{ac}}_k$, write $I^{\mathrm{ac}}_n = \sum_{q \vert n} I^{\mathrm{ac}}_q$.

The Heegner point Kolyvagin system for $(T, \mathcal{F}_{\mathrm{cl}}, \mathcal{N}^{\mathrm{ac}}_1)$ is the family of cohomology classes
$$\ks^{\mathrm{Heeg}} = \left\lbrace \kappa^{\mathrm{Heeg}}_n \in \mathrm{Sel}_{\mathcal{F}_{\mathrm{cl}}(n)}(K, T/ I^{\mathrm{ac}}_n T) \right\rbrace_{n \in \mathcal{N}^{\mathrm{ac}}_1}$$
satisfying certain relations \cite[Definition 1.2.3]{howard-kolyvagin}
where $\mathcal{F}_{\mathrm{cl}}(n)$ means the classical Selmer structure except transverse local conditions at primes dividing $n$. See $\S$\ref{subsec:heegner-point-kolyvagin-systems} for details.
The numerical invariants associated to $\ks^{\mathrm{Heeg}}$ are defined by
\begin{align*}
\mathrm{ord} (\ks^{\mathrm{Heeg}}) & = \mathrm{min} \left\lbrace \nu(n) : n \in \mathcal{N}^{\mathrm{ac}}_1, \kappa^{\mathrm{Heeg}}_n \neq 0 \right\rbrace , \\
\mathrm{ord}_p( \kappa^{\mathrm{Heeg}}_n ) & = \mathrm{max} \left\lbrace j : \kappa^{\mathrm{Heeg}}_n \in p^j \mathrm{Sel}_{\mathcal{F}_{\mathrm{cl}}(n)}(K, T / I^{\mathrm{ac}}_n T)  \right\rbrace , \\
\partial^{(i)}( \ks^{\mathrm{Heeg}} ) & = \mathrm{min} \left\lbrace \mathrm{ord}_p( \kappa^{\mathrm{Heeg}}_n ) :  n \in \mathcal{N}^{\mathrm{ac}}_1 \textrm{ with } \nu(n) = i  \right\rbrace , \\
\partial^{(\infty)}( \ks^{\mathrm{Heeg}}  ) & = \mathrm{min} \left\lbrace \partial^{(i)}( \ks^{\mathrm{Heeg}}  )  : i \geq 0 \right\rbrace .
\end{align*}
%e_i(\ks^{\mathrm{Heeg}}) & = \partial^{(i)} (\ks^{\mathrm{Heeg}}) - \partial^{(i+1)} (\ks^{\mathrm{Heeg}}) , i \geq \mathrm{ord} (\ks^{\mathrm{Heeg}}) .
\subsection{Bipartite Euler systems}
Suppose that $\nu(N^-)$ is odd.
Let $k \geq 1$ be an integer,
\begin{align*}
\mathcal{P}^{\mathrm{adm}}_k = \left\lbrace q, \textrm{  a prime} : \right. & q \nmid Np, q \textrm{ inert in } K/\mathbb{Q},  q\not\equiv \pm 1 \Mod{p},  \\
& \left. a_q(E) \equiv \epsilon_q \cdot (q +1) \Mod{p^k} \textrm{ with } \epsilon_q = 1 \textrm{ or } -1  \right\rbrace ,
\end{align*}
and $I^{\mathrm{adm}}_\ell = ( a_q(E) - \epsilon_q \cdot (q +1)  ) \mathbb{Z}_p$.
Let $\mathcal{N}^{\mathrm{adm}}_k$ be the set of square-free products of the primes in $\mathcal{P}^{\mathrm{adm}}_k$.
For $n \in \mathcal{N}^{\mathrm{adm}}_k$, write $I^{\mathrm{adm}}_n = \sum_{\ell \vert n} I^{\mathrm{adm}}_\ell$.

Let $\mathcal{N}^{\mathrm{def}}_k \subseteq \mathcal{N}^{\mathrm{adm}}_k$ 
be the subset defined by putting the additional condition: every $n \in \mathcal{N}^{\mathrm{def}}_k$ satisfies that $\nu(nN^-)$ is odd.
Then $\mathcal{N}^{\mathrm{ind}}_k$ is defined by $\mathcal{N}^{\mathrm{adm}}_k \setminus \mathcal{N}^{\mathrm{def}}_k$ so that
every $n \in \mathcal{N}^{\mathrm{ind}}_k$ satisfies that $\nu(nN^-)$ is even.
A bipartite Euler system for $(T, \mathcal{F}_{\mathrm{cl}}, \mathcal{N}^{\mathrm{adm}}_1)$  consists of two families
\[
\xymatrix{
\lambdab^{\mathrm{bip}} =  \left\lbrace \lambda^{\mathrm{bip}}_n \in \mathbb{Z}_p/I^{\mathrm{adm}}_n \mathbb{Z}_p \right\rbrace_{n \in \mathcal{N}^{\mathrm{def}}_1}, & \ks^{\mathrm{bip}} = \left\lbrace \kappa^{\mathrm{bip}}_n \in \mathrm{Sel}_{\mathcal{F}_{\mathrm{cl}}(n)}(K, T/I^{\mathrm{adm}}_n T) \right\rbrace_{n \in \mathcal{N}^{\mathrm{ind}}_1}
}
\]
with the connection via the first and second explicit reciprocity laws (Definition \ref{defn:bipartite-euler-systems}) where 
$\lambdab^{\mathrm{bip}}$ is constructed from the values of congruent families of quaternionic automorphic forms varying definite quaternion algebras and
$\ks^{\mathrm{bip}}$ is constructed from Heegner points on \emph{different} Shimura curves varying indefinite quaternion algebras.
For the bipartite setting, $\mathcal{F}_{\mathrm{cl}}(n)$ means the classical Selmer structure except ordinary local conditions at primes dividing $n$ as reviewed in $\S$\ref{subsubsec:selmer-structure}. See $\S$\ref{sec:bipartite-euler-systems} for further details.
The numerical invariants associated to $\lambdab^{\mathrm{bip}}$ are defined by
\begin{align*}
\mathrm{ord} (\lambdab^{\mathrm{bip}}) & = \mathrm{min} \left\lbrace \nu(n) : n \in \mathcal{N}^{\mathrm{def}}_1, \lambda^{\mathrm{bip}}_n \neq 0 \right\rbrace , \\
\mathrm{ord}_p(  \lambda^{\mathrm{bip}}_n ) & = \mathrm{max} \left\lbrace j : \lambda^{\mathrm{bip}}_n \in p^j \mathbb{Z}_p/I^{\mathrm{adm}}_n \mathbb{Z}_p  \right\rbrace , \\
\partial^{(i)}( \lambdab^{\mathrm{bip}} ) & = 
\mathrm{min} \left\lbrace \mathrm{ord}_p(  \lambda^{\mathrm{bip}}_n ) : n \in \mathcal{N}^{\mathrm{def}}_1 \textrm{ with (even) } \nu(n) = i  \right\rbrace , \\
\partial^{(\infty)}( \lambdab^{\mathrm{bip}} ) & = \mathrm{min} \left\lbrace \partial^{(i)}( \lambdab^{\mathrm{bip}} )  : i \geq 0 \right\rbrace .
\end{align*}
Our formulation of bipartite Euler systems works even when an elliptic curve has supersingular reduction at $p$ \emph{and} $p$ is inert in $K/\mathbb{Q}$.
\subsection{The non-triviality questions} \label{subsec:non-triviality}
The non-vanishing of $\kn$, the non-triviality of $\ks^{\mathrm{Heeg}}$, and the non-vanishing of $\lambdab^{\mathrm{bip}}$ play  important roles for structure theorems.
In \cite{kim-structure-selmer}, we proved the equivalence between the the Iwasawa main conjecture localized at the augmentation ideal and the non-vanishing of $\kn$. 
Extending this idea to the cases of $\ks^{\mathrm{Heeg}}$ and $\lambdab^{\mathrm{bip}}$, we obtain the following result.
\begin{thm} \label{thm:non-triviality-total}
We keep all the working hypotheses in $\S$\ref{subsec:working-hypotheses}.
\begin{enumerate}
\item[(IMC)] The Iwasawa main conjecture localized at the augmentation ideal holds if and only if $\kn$ does not vanish.
In this case, the structure of $\mathrm{Sel}(\mathbb{Q}, E[p^\infty])$ is determined by $\kn$.
\item[(HPMC)] Suppose that $\nu(N^-)$ is even.
If $E$ has good ordinary reduction at $p$, then the Heegner point main conjecture localized at the augmentation ideal holds if and only if $\ks^{\mathrm{Heeg}}$ is non-trivial.
In this case, if the ranks of $E(\mathbb{Q})$ and $E^{K}(\mathbb{Q})$ only differ by one, then
the structure of $\mathrm{Sel}(K, E[p^\infty])$ is determined by $\ks^{\mathrm{Heeg}}$.
\item[(AMC)]  Suppose that $\nu(N^-)$ is odd.
If $E$ has good ordinary reduction at $p$, then the anticyclotomic main conjecture localized at the augmentation ideal holds if and only if $\lambdab^{\mathrm{bip}}$ does not vanish.
In this case, if the mod $p^k$ reduction of $\lambdab^{\mathrm{bip}}$ is free for every $k \geq 1$ in the sense of Definition 
\ref{defn:freeness-bipartite}, then the structure of $\mathrm{Sel}(K, E[p^\infty])$ is determined by $\lambdab^{\mathrm{bip}}$.
\end{enumerate}
\end{thm}
\begin{proof}
For the equivalence statements, see Corollary \ref{cor:non-triviality-kn} for (IMC), Corollary \ref{cor:non-triviality-heegner} for (HPMC), and Corollary \ref{cor:non-triviality-bipartite} for (AMC), respectively.

For the structure theorems, see Theorem \ref{thm:structure-kn} for $\kn$, Theorem \ref{thm:structure-kolyvagin} for $\ks^{\mathrm{Heeg}}$, and Theorem \ref{thm:structure-bipartite} for $\lambdab^{\mathrm{bip}}$, respectively.
\end{proof}
In order to avoid the confusion of the reader, we emphasize that Theorem \ref{thm:structure-bipartite}, Corollary \ref{cor:non-triviality-heegner}, and Corollary \ref{cor:non-triviality-bipartite} are the main contribution of this article. In other words, we prove
\[
\xymatrix{
 {\substack{\textrm{Perrin-Riou's Heegner point main conjecture} \\\textrm{at the augmentation ideal} \\\textrm{\cite{perrin-riou-heegner}} } } 
  \ar@{<=>}[rr]^-{\textrm{Cor. \ref{cor:non-triviality-heegner}}} &  &  {\substack{\textrm{the non-triviality of} \\\textrm{the Heegner point Kolyvagin system} \\\textrm{(Kolyvagin's conjecture \cite{kolyvagin-selmer}),} } }   
}
\]
and its definite analogue
\[
\xymatrix{
 {\substack{\textrm{Bertolini--Darmon's anticyclotomic main conjecture} \\\textrm{at the augmentation ideal} \\\textrm{\cite{bertolini-darmon-imc-2005}} } } 
  \ar@{<=>}[rr]^-{\textrm{Cor. \ref{cor:non-triviality-bipartite}}}   & & {\substack{\textrm{the non-triviality of} \\\textrm{the bipartite Euler system} \\\textrm{\cite{howard-bipartite}.} } }   
}
\]
We also prove the definite analogue of Kolyvagin's structure theorem
\[
\xymatrix{
  {\substack{\textrm{the non-triviality of} \\\textrm{the (free) bipartite Euler system} \\\textrm{\cite{howard-bipartite}} } }
  \ar@{=>}[rr]^-{\textrm{Thm. \ref{thm:structure-bipartite}}}_-{\textrm{Thm. \ref{thm:structure-bipartite-non-free}}}  & &  {\substack{\textrm{the structure of Selmer groups} \\\textrm{in the definite setting} \\\textrm{(cf. \cite{kolyvagin-selmer}).} } }   
}
\]
It is remarkable that the first implication (Corollary \ref{cor:non-triviality-heegner}) refines the standard argument to obtain the strong version of the rank one $p$-converse to the theorem of Gross--Zagier and Kolyvagin from the Heegner point main conjecture \cite{burungale-tian-p-converse, wan-heegner}.
Theorem \ref{thm:structure-kn}  and Corollary \ref{cor:non-triviality-kn}  are proved in \cite{kim-structure-selmer}, and Theorem \ref{thm:structure-kolyvagin}  is proved in \cite{kolyvagin-selmer}.

Theorem \ref{thm:non-triviality-total} provides us with numerous examples satisfying the non-triviality statements which are used in Theorems \ref{thm:main-higher-gross-zagier-easy} and \ref{thm:main-higher-waldspurger-easy}.
We do not take the most general version here in order to make the statement simpler.
\begin{cor}
Let $E$ be a semi-stable elliptic curve over $\mathbb{Q}$ with good ordinary reduction at $p \geq 5$ such that $\overline{\rho}$ is surjective.
\begin{enumerate}
\item[($\kn$)] Then $\kn$ does not vanish.
\item[($\ks^{\mathrm{Heeg}}$)] Let $K$ be an imaginary quadratic field in $\S$\ref{subsec:working-hypotheses} with even $\nu(N^-)$.
If $\overline{\rho}$ is ramified at all primes dividing $N^-$, then $\ks^{\mathrm{Heeg}}$ is non-trivial.
\item[($\lambdab^{\mathrm{bip}}$)] Let $K$ be an imaginary quadratic field in $\S$\ref{subsec:working-hypotheses} with odd $\nu(N^-)$.
Assume that
\begin{itemize}
\item $a_p(E) \not\equiv 1 \pmod{p}$ if $p$ splits in $K$, and
\item $a_p(E) \not\equiv \pm 1 \pmod{p}$ if $p$ is inert in $K$.
\end{itemize}
If $\overline{\rho}$ is ramified at all primes dividing $N^-$, then $\lambdab^{\mathrm{bip}}$ does not vanish.
\end{enumerate}
\end{cor}
\begin{proof}
See  \cite{sakamoto-p-selmer, kim-structure-selmer} for ($\kn$),   \cite{wei-zhang-mazur-tate} for ($\ks^{\mathrm{Heeg}}$), and \cite{burungale-castella-kim} for ($\lambdab^{\mathrm{bip}}$).
\end{proof}
Recently, Burungale--Castella--Grossi--Skinner independently obtained the non-triviality of $\ks^{\mathrm{Heeg}}$ and Kato's Kolyvagin systems without the large image assumption \cite{burungale-castella-grossi-skinner-indivisibility}. In their argument, the full Heegner point main conjecture and the full Iwasawa main conjecture are used, respectively. Also, the structure theorems are not studied therein yet.

\subsection{Higher Gross--Zagier and Waldspurger formulas}
Denote by $M_{/\mathrm{div}}$ the quotient of $M$ by its maximal divisible submodule.

Our ``higher" Gross--Zagier formula relates $\ks^{\mathrm{Heeg}}$ with $\kn(E)$ and $\kn(E^K)$ and reveals the exact size of the Selmer group as follows.
\begin{thm} \label{thm:main-higher-gross-zagier-easy}
We keep all the working hypotheses in $\S$\ref{subsec:working-hypotheses} with even $\nu(N^-)$.
If
 $\ks^{\mathrm{Heeg}}$ is non-trivial, and
 $\kn(E)$ and $\kn(E^K)$ are both non-zero, 
then 
\begin{align*}
\mathrm{ord}(\ks^{\mathrm{Heeg}})+ 1 
 & = \mathrm{max} \left\lbrace \mathrm{ord} (\kn(E)) ,\mathrm{ord} (\kn(E^K)) \right\rbrace \\
& =  \mathrm{max} \left\lbrace \mathrm{cork}_{\mathbb{Z}_p} \mathrm{Sel}(\mathbb{Q}, E[p^\infty]) , \mathrm{cork}_{\mathbb{Z}_p} \mathrm{Sel}(\mathbb{Q}, E^K[p^\infty]) \right\rbrace .
\end{align*}
If we further assume that
$\mathrm{cork}_{\mathbb{Z}_p} \mathrm{Sel}(\mathbb{Q}, E[p^\infty])$ and $\mathrm{cork}_{\mathbb{Z}_p} \mathrm{Sel}(\mathbb{Q}, E^K[p^\infty])$ differ by 1, then
\begin{align*}
\mathrm{ord}(\ks^{\mathrm{Heeg}})
 & = \mathrm{min} \left\lbrace \mathrm{ord} (\kn(E)) ,\mathrm{ord} (\kn(E^K)) \right\rbrace \\
& =  \mathrm{min} \left\lbrace \mathrm{cork}_{\mathbb{Z}_p} \mathrm{Sel}(\mathbb{Q}, E[p^\infty]) , \mathrm{cork}_{\mathbb{Z}_p} \mathrm{Sel}(\mathbb{Q}, E^K[p^\infty]) \right\rbrace ,
\end{align*}
and
\begin{align} \label{eqn:main-higher-gross-zagier-easy}
\begin{split}
2 \cdot \left( \partial^{( \mathrm{ord}(\ks^{\mathrm{Heeg}}) )} ( \ks^{\mathrm{Heeg}} )
- \partial^{( \infty )} ( \ks^{\mathrm{Heeg}} ) \right)
 = \ &
\partial^{( \mathrm{ord} (\kn(E)) )} ( \kn(E) )
- \partial^{( \infty )} ( \kn(E) ) \\
& +
\partial^{( \mathrm{ord} (\kn(E^K)) )} ( \kn(E^K) )
- \partial^{( \infty )} ( \kn(E^K) ) \\
= \ & \mathrm{length}_{\mathbb{Z}_p}  \left( \mathrm{Sel}(K, E[p^\infty])_{/\mathrm{div}} \right).
\end{split}
\end{align}
\end{thm}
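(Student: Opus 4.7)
The plan is to combine three ingredients: the $\mathrm{Gal}(K/\mathbb{Q})$-eigenspace decomposition of the Selmer group over $K$, Kurihara's structure theorem applied to both $E$ and $E^K$, and Kolyvagin's structure theorem for Heegner point Kolyvagin systems. Since $p$ is odd and $(D_K, Np) = 1$, restriction induces the eigenspace isomorphism displayed in the picture of $\S$\ref{subsec:intro-from-euler-to-kolyvagin},
\[
\mathrm{Sel}(K, E[p^\infty]) \simeq \mathrm{Sel}(\mathbb{Q}, E[p^\infty]) \oplus \mathrm{Sel}(\mathbb{Q}, E^K[p^\infty]),
\]
so that both $\mathbb{Z}_p$-corank and the $\mathbb{Z}_p$-length of the quotient by the maximal divisible submodule are additive across this splitting. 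The working hypotheses of $\S$\ref{subsec:working-hypotheses} moreover transfer verbatim from $E$ to $E^K$ because $(D_K, Np) = 1$.

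Next, I apply Kurihara's structure theorem, proved under our hypotheses in \cite{kim-structure-selmer}, separately to $E$ and to $E^K$, obtaining for $\bullet \in \{E, E^K\}$ the two identities
\[
\mathrm{ord}(\kn(\bullet)) = \mathrm{cork}_{\mathbb{Z}_p}\mathrm{Sel}(\mathbb{Q}, \bullet[p^\infty]),
\]
\[
\partial^{(\mathrm{ord}(\kn(\bullet)))}(\kn(\bullet)) - \partial^{(\infty)}(\kn(\bullet)) = \mathrm{length}_{\mathbb{Z}_p}\mathrm{Sel}(\mathbb{Q}, \bullet[p^\infty])_{/\mathrm{div}}.
\]
These supply the right-hand equalities in each display of the theorem: those of the corank statements immediately, and the middle equality in \eqref{eqn:main-higher-gross-zagier-easy} after summing over $\bullet$ and invoking additivity from the previous paragraph.

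On the Heegner point side, I invoke Kolyvagin's structure theorem for Heegner point Kolyvagin systems, in the form codified by Mazur--Rubin \cite{mazur-rubin-book} and extended by Howard \cite{howard-kolyvagin}, which controls both eigenspaces simultaneously through the single system $\ks^{\mathrm{Heeg}}$. Under our hypotheses it asserts that $\mathrm{ord}(\ks^{\mathrm{Heeg}}) + 1$ equals the larger of the two eigencoranks; that it equals the smaller one whenever the two eigencoranks differ by exactly $1$; and that the Kolyvagin length identity
\[
2 \bigl( \partial^{(\mathrm{ord}(\ks^{\mathrm{Heeg}}))}(\ks^{\mathrm{Heeg}}) - \partial^{(\infty)}(\ks^{\mathrm{Heeg}}) \bigr) = \mathrm{length}_{\mathbb{Z}_p} \mathrm{Sel}(K, E[p^\infty])_{/\mathrm{div}}
\]
holds. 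Assembling these three assertions with the two previous paragraphs proves the theorem.

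The main obstacle is the Kolyvagin length identity above, and in particular the factor of $2$. One needs to refine the Mazur--Rubin--Howard machinery so that the invariants $\partial^{(i)}(\ks^{\mathrm{Heeg}}) - \partial^{(\infty)}(\ks^{\mathrm{Heeg}})$ record the \emph{full} elementary divisor sequence of $\mathrm{Sel}(K, E[p^\infty])_{/\mathrm{div}}$---not merely its leading elementary divisor---in a manner compatible with the eigenspace splitting of the first step, and under hypotheses that impose no small-rank assumption and permit $p$ to split, ramify, or be inert in $K$. This is precisely where the structural Gross--Zagier comparison indicated by the double-headed curved arrow in the picture of $\S$\ref{subsec:intro-from-euler-to-kolyvagin} must enter essentially.
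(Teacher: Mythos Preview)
Your approach is the same as the paper's: combine the eigenspace decomposition $\mathrm{Sel}(K,E[p^\infty])\simeq\mathrm{Sel}(\mathbb{Q},E[p^\infty])\oplus\mathrm{Sel}(\mathbb{Q},E^K[p^\infty])$, Kurihara's structure theorem (Theorem~\ref{thm:structure-kn}) applied to $E$ and $E^K$ separately, and Kolyvagin's structure theorem for $\ks^{\mathrm{Heeg}}$ (Theorems~\ref{thm:kolyvagin-vanishing-order} and~\ref{thm:structure-kolyvagin}); the paper's proof is exactly this comparison, carried out in $\S$\ref{subsec:structural-gross-zagier}.

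Two points of clarification. First, the ``Kolyvagin length identity'' you need is \emph{not} something requiring a refinement of Mazur--Rubin--Howard machinery, as your last paragraph suggests; it is already contained in Kolyvagin's original structure theorem \cite{kolyvagin-selmer}, recorded here as Theorem~\ref{thm:structure-kolyvagin} together with Remark~\ref{rem:error-term-heegner}. Your misattribution to \cite{mazur-rubin-book, howard-kolyvagin} is the source of your perceived obstacle. Second, that identity does \emph{not} hold unconditionally in the form you state: in general Remark~\ref{rem:error-term-heegner} gives
\[
\mathrm{length}_{\mathbb{Z}_p}\bigl(\mathrm{Sel}(K,E[p^\infty])_{/\mathrm{div}}\bigr)=2\bigl(\partial^{(\mathrm{ord}(\ks^{\mathrm{Heeg}}))}(\ks^{\mathrm{Heeg}})-\partial^{(\infty)}(\ks^{\mathrm{Heeg}})+\mathrm{length}_{\mathbb{Z}_p}(\mathrm{Err})\bigr),
\]
and it is precisely the ``coranks differ by $1$'' hypothesis (via Corollary~\ref{cor:corank-difference}) that forces $\mathrm{Err}=0$. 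You should place the length identity under that hypothesis, matching the logical structure of the theorem statement itself.
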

\begin{proof}
This is a piece of the comparison between Theorem  \ref{thm:structure-kn} and Theorem \ref{thm:structure-kolyvagin}, which is given in $\S$\ref{subsec:structural-gross-zagier}.
\end{proof}
\begin{rem} \label{rem:main-higher-gross-zagier-easy}
\begin{enumerate}
\item
The additional assumption on Selmer coranks comes from Kolyvagin's structure theorem \cite{kolyvagin-selmer}, \cite[Remark 18]{wei-zhang-mazur-tate}.
See \cite{rubin-silverberg-twists-rank-four} for the variation of ranks in quadratic twist families of elliptic curves.
\item 
Conceptually, the RHS of (\ref{eqn:main-higher-gross-zagier-easy}) corresponds to the derivative of the Rankin--Selberg product $L$-function, and the LHS of (\ref{eqn:main-higher-gross-zagier-easy}) corresponds to the height of Heegner points. See $\S$\ref{subsec:classical-gross-zagier} for the comparison with classical Gross--Zagier formula. \item
Let $\ks^{\mathrm{Kato}}(E)$ be Kato's Kolyvagin system for $E$.
Since $\kn(E)$ is the image of $\ks^{\mathrm{Kato}}(E)$ under the dual exponential map, Theorem \ref{thm:main-higher-gross-zagier-easy} also explicitly relates  $\ks^{\mathrm{Heeg}}$ to $\ks^{\mathrm{Kato}}(E)$ and $\ks^{\mathrm{Kato}}(E^K)$ from the viewpoint of Perrin-Riou's conjecture \cite{perrin-riou-rational-pts, bertolini-darmon-venerucci, burungale-skinner-tian-wan, kazim-pollack-sasaki}.
\end{enumerate}
\end{rem}
Our ``higher" Waldspurger formula relates $\lambdab^{\mathrm{bip}}$ with $\kn(E)$ and $\kn(E^K)$ and reveals the exact size of the Selmer group again.
\begin{thm} \label{thm:main-higher-waldspurger-easy}
We keep all the working hypotheses in $\S$\ref{subsec:working-hypotheses} with odd $\nu(N^-)$.
Assume that the mod $p^k$ reduction of $\lambdab^{\mathrm{bip}}$ is free for every $k \geq 1$ in the sense of Definition \ref{defn:freeness-bipartite}. 
If
 $\lambdab^{\mathrm{bip}}$ is non-zero, and
 $\kn(E)$ and $\kn(E^K)$ are non-zero,
then 
$$\mathrm{ord} ( \lambdab^{\mathrm{bip}} ) =  \mathrm{ord} (\kn(E)) + \mathrm{ord} (\kn(E^K)) = \mathrm{cork}_{\mathbb{Z}_p} \mathrm{Sel}(K, E[p^\infty])$$
and
\begin{align} \label{eqn:main-higher-waldspurger-easy}
\begin{split}
2 \cdot \left( \partial^{( \mathrm{ord}( \lambdab^{\mathrm{bip}} ) )} ( \lambdab^{\mathrm{bip}} )
- \partial^{( \infty )} ( \lambdab^{\mathrm{bip}} ) \right)
 = \ &
\partial^{( \mathrm{ord} (\kn(E)) )} ( \kn(E) )
- \partial^{( \infty )} ( \kn(E) ) \\
& +
\partial^{( \mathrm{ord} (\kn(E^K)) )} ( \kn(E^K) )
- \partial^{( \infty )} ( \kn(E^K) ) \\
= \ & \mathrm{length}_{\mathbb{Z}_p} \left( \mathrm{Sel}(K, E[p^\infty])_{/\mathrm{div}} \right). 
\end{split}
\end{align}
\end{thm}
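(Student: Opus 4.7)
The plan is to derive Theorem \ref{thm:main-higher-waldspurger-easy} as a formal consequence of three ingredients: (i) a structure theorem for $\mathrm{Sel}(K, E[p^\infty])$ in terms of $\lambdab^{\mathrm{bip}}$, namely Theorem \ref{thm:structure-bipartite}; (ii) Kurihara's structure theorem describing $\mathrm{Sel}(\mathbb{Q}, E[p^\infty])$ and $\mathrm{Sel}(\mathbb{Q}, E^K[p^\infty])$ in terms of $\kn(E)$ and $\kn(E^K)$; and (iii) the $\mathrm{Gal}(K/\mathbb{Q})$-equivariant decomposition
\[
\mathrm{Sel}(K, E[p^\infty]) \simeq \mathrm{Sel}(\mathbb{Q}, E[p^\infty]) \oplus \mathrm{Sel}(\mathbb{Q}, E^K[p^\infty]),
\]
which is available because $p$ is odd, so $\mathbb{Z}_p[\mathrm{Gal}(K/\mathbb{Q})]$ splits into eigenspaces and restriction-inflation is trivial on $p$-primary modules.

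The first step is to invoke Theorem \ref{thm:structure-bipartite}: under the working hypotheses of $\S$\ref{subsec:working-hypotheses} with $\nu(N^-)$ odd, the non-vanishing of $\lambdab^{\mathrm{bip}}$ forces $\mathrm{ord}(\lambdab^{\mathrm{bip}}) = \mathrm{cork}_{\mathbb{Z}_p} \mathrm{Sel}(K, E[p^\infty])$ and
\[
2 \left( \partial^{(\mathrm{ord}(\lambdab^{\mathrm{bip}}))}(\lambdab^{\mathrm{bip}}) - \partial^{(\infty)}(\lambdab^{\mathrm{bip}}) \right) = \mathrm{length}_{\mathbb{Z}_p} \left( \mathrm{Sel}(K, E[p^\infty])_{/\mathrm{div}} \right).
\]
The factor of two should reflect the parity constraint on $\mathcal{N}^{\mathrm{def}}_1$: since $\nu(N^-)$ is odd, requiring $\nu(nN^-)$ odd forces $\nu(n)$ even, so the relevant $\partial^{(i)}$-jumps take place in increments of $2$, with each jump peeling off one elementary divisor of $\mathrm{Sel}(K, E[p^\infty])_{/\mathrm{div}}$.

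Second, apply Kurihara's structure theorem (summarized in $\S$\ref{sec:kurihara-numbers} from \cite{kurihara-munster, kurihara-iwasawa-2012, kim-structure-selmer}) separately to $E$ and to $E^K$; note that the quadratic twist $E^K$ inherits hypotheses (a) and (b) from $E$, so both theorems apply. The non-vanishing of $\kn(E)$ yields
$\mathrm{ord}(\kn(E)) = \mathrm{cork}_{\mathbb{Z}_p} \mathrm{Sel}(\mathbb{Q}, E[p^\infty])$ together with
\[
\partial^{(\mathrm{ord}(\kn(E)))}(\kn(E)) - \partial^{(\infty)}(\kn(E)) = \mathrm{length}_{\mathbb{Z}_p} \left( \mathrm{Sel}(\mathbb{Q}, E[p^\infty])_{/\mathrm{div}} \right),
\]
and analogously for $E^K$. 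Adding these two identities and matching against the output of the bipartite structure theorem via the decomposition (iii) immediately yields both the corank identity $\mathrm{ord}(\lambdab^{\mathrm{bip}}) = \mathrm{ord}(\kn(E)) + \mathrm{ord}(\kn(E^K)) = \mathrm{cork}_{\mathbb{Z}_p} \mathrm{Sel}(K, E[p^\infty])$ and the length identity (\ref{eqn:main-higher-waldspurger-easy}).

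The main obstacle is establishing Theorem \ref{thm:structure-bipartite} itself in the refined form described above. Howard's bipartite Euler system machinery \cite{howard-bipartite} provides sharp upper bounds on $\#\mathrm{Sel}(K, E[p^\infty])_{/\mathrm{div}}$, but does not by itself extract the full elementary divisor structure. To upgrade it to a Mazur--Rubin style structure theorem, one must combine the first and second explicit reciprocity laws --- which interchange the information carried by $\lambdab^{\mathrm{bip}}$ and the cohomology classes $\ks^{\mathrm{bip}}$ --- with a Kolyvagin-derivative recursion: given an element of $\mathrm{Sel}(K, E[p^\infty])$ of prescribed depth in the $p$-adic filtration, locate an admissible prime $\ell \in \mathcal{P}^{\mathrm{adm}}_k$ at which the localization is non-trivial, then use the reciprocity laws to transfer this depth into an $\ell$-divisibility statement on the corresponding $\lambda^{\mathrm{bip}}_n$. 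Iterating this argument identifies the $\partial^{(i)}$-jumps of $\lambdab^{\mathrm{bip}}$ (weighted by the factor $2$ from the parity constraint) with the elementary divisors of $\mathrm{Sel}(K, E[p^\infty])_{/\mathrm{div}}$, and constitutes the technical heart of the entire argument.
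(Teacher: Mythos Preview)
Your proposal is correct and matches the paper's approach exactly: the theorem is deduced by comparing the bipartite structure theorem (Theorem \ref{thm:structure-bipartite}) with Kurihara's structure theorem (Theorem \ref{thm:structure-kn}) for $E$ and $E^K$ via the decomposition $\mathrm{Sel}(K,E[p^\infty]) \simeq \mathrm{Sel}(\mathbb{Q},E[p^\infty]) \oplus \mathrm{Sel}(\mathbb{Q},E^K[p^\infty])$, precisely as in $\S$\ref{subsec:structural-waldspurger}. Your identification of Theorem \ref{thm:structure-bipartite} as the technical core, and your sketch of its proof via Howard's machinery together with a Chebotarev-driven recursion peeling off elementary divisors, also accurately reflects what the paper does in $\S$\ref{subsec:formalism-bipartite}.
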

\begin{proof}
It follows from the comparison between the structure theorem of Selmer groups via bipartite Euler systems (Theorem \ref{thm:structure-bipartite}) and Theorem \ref{thm:structure-kn}, which is given in $\S$\ref{subsec:structural-waldspurger}.
\end{proof}
\begin{rem}
Even without the freeness assumption, we are still able to prove the mod $p^k$ version of Theorem \ref{thm:main-higher-waldspurger-easy} for every $k \geq 1$.
See (\ref{eqn:main-higher-waldspurger-easy-mod-p-k}).
\end{rem}

\subsection{The connection with the classical Gross--Zagier formula} \label{subsec:classical-gross-zagier}
Let $K$ be an imaginary quadratic field of discriminant $D_K$ as in $\S$\ref{subsec:working-hypotheses} with $N^- = 1$.
Assume that $E$ has rank one and its quadratic twist $E^K$ by $K$ has rank zero.
Thus, for a good reduction prime $p >3$ with $E(K)[p] = 0$,  the $p$-part of the Gross--Zagier formula \cite{gross-zagier-original} is equivalent to
\begin{equation} \label{eqn:gross-zagier-orinigal-modified-2}
 \mathrm{ord}_p \left(   \dfrac{L'(E,1)}{\Omega^+_E \cdot \mathrm{Reg}(E/\mathbb{Q})   }   \right) + \mathrm{ord}_p \left(  \dfrac{L(E^K,1)}{ \Omega^+_{E^K}   } \right) = 2 \cdot \mathrm{ord}_p \left( [E(K): \mathbb{Z} \cdot P_K] \right) 
\end{equation}
where $P_K$ is the Heegner point over $K$. 
\begin{rem}
If $N^- \neq 1$ and $\nu(N^-)$ is just even, then the Heegner point on a Shimura curve should be used in (\ref{eqn:gross-zagier-orinigal-modified-2}), and the Tamagawa factors at primes dividing $N^-$ appears in (\ref{eqn:gross-zagier-orinigal-modified-2}) as in \cite[(7.4.c)]{jetchev-skinner-wan}. 
\end{rem}
For simplicity, we assume that both $E(\mathbb{Q}_p)[p^\infty]$ and $E^K(\mathbb{Q}_p)[p^\infty]$ are trivial.
Then one of our formulas  (\ref{eqn:main-higher-gross-zagier-easy}) can be regarded as a conceptual generalization of (\ref{eqn:gross-zagier-orinigal-modified-2}) with equalities
\begin{equation} \label{eqn:modular-symbols-regulators}
\xymatrix{
\partial^{(1)} ( \kn(E) ) = \mathrm{ord}_p \left( \dfrac{L'(E,1)}{\Omega^+_{E} \cdot \mathrm{Reg}(E/\mathbb{Q}) } \right) , & \partial^{(0)} ( \kn(E^K) ) = \mathrm{ord}_p \left( \dfrac{L(E^K,1)}{\Omega^+_{E^K}} \right)
}
\end{equation}
``up to Tamagawa factors" since we also expect that
\begin{align*}
\partial^{( \infty )} ( \ks^{\mathrm{Heeg}} ) & = \partial^{( \infty )} ( \kn(E) ) + \partial^{( \infty )} ( \kn(E^K) ) \\
& = \textrm{the sum of the $p$-valuations of local Tamagawa factors of $E$ over $K$} .
\end{align*}
See \cite{jetchev-global-divisibility, kazim-tamagawa, wei-zhang-cdm, kim-structure-selmer, burungale-castella-grossi-skinner-indivisibility} for the details of the Tamagawa factor issue.
The first equality of (\ref{eqn:modular-symbols-regulators})  explicitly shows how the derivative of the $L$-function and the regulator are absorbed together in modular symbols.

\section{Quick review of Kurihara numbers and Heegner point Kolyvagin systems}
\subsection{Modular symbols and Kurihara numbers} \label{subsec:kurihara-numbers}
We review the structural refinement of Birch and Swinnerton-Dyer conjecture in \cite{kim-structure-selmer}.
\subsubsection{Modular symbols and Kurihara numbers}
Let $p \geq 5$ be a prime and $E$ an elliptic curve over $\mathbb{Q}$ satisfying (a) and (b) of the working hypotheses in $\S$\ref{subsec:working-hypotheses}.
Let $f = \sum_{n \geq 1} a_n(E) q^n \in S_2(\Gamma_0(N))$ be the newform corresponding to $E$ \cite{bcdt}.
For each $\dfrac{a}{b} \in \mathbb{Q}$, the modular symbol $\left[\dfrac{a}{b}\right]^+$ is defined by equality
$$2 \pi \cdot \int^{\infty}_0 f(\dfrac{a}{b}+ iy )dy = \left[\dfrac{a}{b}\right]^+ \cdot \Omega^+_E + \left[\dfrac{a}{b}\right]^- \cdot \sqrt{-1} \cdot \Omega^-_E$$
where $\left[\dfrac{a}{b}\right]^+$ and $\left[\dfrac{a}{b}\right]^-$ are rational numbers where $\Omega^{\pm}_E$ is the real and imaginary N\'{e}ron period of $E$, respectively \cite[(1.1)]{mazur-tate}.
Under our working hypotheses, $\left[\dfrac{a}{b}\right]^+ \in \mathbb{Z}_{(p)}$.

For each prime $\ell \in \mathcal{P}^{\mathrm{cyc}}_k$, we fix a primitive root $\eta_\ell$ mod $\ell$ and define
 $\mathrm{log}_{\eta_\ell}(a) \in \mathbb{Z}/(\ell-1)$ by $\eta^{ \mathrm{log}_{\eta_\ell}(a)}_\ell \equiv a \pmod{\ell}$.
For each $n \in \mathcal{N}^{\mathrm{cyc}}_1$, the \textbf{(mod $I^{\mathrm{cyc}}_n$) Kurihara number at $n$} is defined by
$$\widetilde{\delta}_n  = \sum_{a \in (\mathbb{Z}/n\mathbb{Z})^\times } \left( \overline{\left[ \dfrac{a}{n} \right]^+}  \cdot   \prod_{\ell \mid n} \overline{\mathrm{log}_{\eta_\ell} (a)} \right) \in \mathbb{Z}_p/I^{\mathrm{cyc}}_n\mathbb{Z}_p$$
where $\overline{\left[ \dfrac{a}{n} \right]^+}$ is the mod $I^{\mathrm{cyc}}_n$ reduction of $\left[ \dfrac{a}{n} \right]^+$ and 
$\overline{\mathrm{log}_{\eta_\ell} (a)}$ is also the mod $I^{\mathrm{cyc}}_n$ reduction of $\mathrm{log}_{\eta_\ell} (a)$.
Note that $\widetilde{\delta}_n$ is well-defined up to $(\mathbb{Z}_p/I^{\mathrm{cyc}}_n\mathbb{Z}_p)^\times$.
When $n=1$, we have
$$\widetilde{\delta}_1 = [0]^+ = \dfrac{L(E, 1)}{\Omega^+_E} \in \mathbb{Z}_{(p)} .$$
The \textbf{collection of Kurihara numbers} is defined by 
$$\widetilde{\boldsymbol{\delta}} = \left\lbrace \widetilde{\delta}_n \in \mathbb{Z}_p/I^{\mathrm{cyc}}_n \mathbb{Z}_p : n \in \mathcal{N}^{\mathrm{cyc}}_1 \right\rbrace .$$
We assume that $\mathrm{ord} (\widetilde{\boldsymbol{\delta}}) < \infty$, i.e. the collection of Kurihara numbers does not vanish identically.
\subsubsection{The modular symbol description of Selmer groups}
Reminding the numerical invariants in $\S$\ref{subsec:kurihara-numbers-invariants},
the following structure theorem is proved in \cite{kim-structure-selmer}.
\begin{thm} \label{thm:structure-kn}
Let $E$ be an elliptic curve over $\mathbb{Q}$ and $p \geq 5$ a prime  satisfying (a) and (b) of the working hypotheses in $\S$\ref{subsec:working-hypotheses}.
If $\mathrm{ord} (\kn) < \infty$,
then
\begin{enumerate}
\item $\mathrm{ord}(\kn) =  \mathrm{cork}_{\mathbb{Z}_p}\mathrm{Sel}(\mathbb{Q}, E[p^\infty])$,
\item $\mathrm{Fitt}_{i, \mathbb{Z}_p} \left(  \mathrm{Sel}(\mathbb{Q}, E[p^\infty])^\vee \right) = p^{\partial^{(i)} (\widetilde{\boldsymbol{\delta}} ) - \partial^{(\infty)} (\widetilde{\boldsymbol{\delta}} )} \mathbb{Z}_p$
for all $i \geq \mathrm{ord}(\kn)$ with $i \equiv \mathrm{ord}(\kn) \pmod{2}$, and
\item $\partial^{(\mathrm{ord}(\kn))} (\widetilde{\boldsymbol{\delta}} ) - \partial^{(\infty)} (\widetilde{\boldsymbol{\delta}} ) = \mathrm{length}_{\mathbb{Z}_p} \left( \mathrm{Sel}(\mathbb{Q}, E[p^\infty])_{/\mathrm{div}} \right)$.
\end{enumerate}
In other words, we have
$$\mathrm{Sel}(\mathbb{Q}, E[p^\infty])  \simeq \left( \mathbb{Q}_p/\mathbb{Z}_p \right)^{\oplus \mathrm{ord}(\kn)} \oplus \bigoplus_{i\geq 1} \left( \mathbb{Z}/p^{(  \partial^{(\mathrm{ord}(\kn) + 2(i-1))} (\widetilde{\boldsymbol{\delta}} ) - \partial^{(\mathrm{ord}(\kn) + 2i)} (\widetilde{\boldsymbol{\delta}} ) )/2}\mathbb{Z} \right)^{\oplus 2}  .$$
\end{thm}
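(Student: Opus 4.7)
The plan is to combine Kato's Kolyvagin system with the Mazur--Rubin structure theorem and transport the resulting Fitting-ideal description across the dual exponential map, which realizes the Kurihara numbers as the local-at-$p$ shadow of Kato's Kolyvagin system. Under the working hypotheses (a) and (b), this reduces the theorem to a known package: the non-triviality input is exactly $\mathrm{ord}(\kn) < \infty$.

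First I would recall Kato's Kolyvagin system $\ks^{\mathrm{Kato}} = \{\kappa^{\mathrm{Kato}}_n\}$ attached to $T$ with respect to the $p$-strict Selmer structure, obtained by taking cyclotomic Kolyvagin derivatives of the Beilinson--Kato zeta elements along the primes in $\mathcal{P}^{\mathrm{cyc}}_1$. Kato's explicit reciprocity law, combined with the Mazur--Tate description of $p$-adic $L$-values in terms of modular symbols, identifies the mod $I^{\mathrm{cyc}}_n$ reduction of the dual exponential of $\mathrm{loc}_p(\kappa^{\mathrm{Kato}}_n)$ with $\widetilde{\delta}_n$ up to a single unit of $\mathbb{Z}_p$ independent of $n$. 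Consequently the divisibility profiles agree termwise,
\[
\partial^{(i)}(\ks^{\mathrm{Kato}}) - \partial^{(\infty)}(\ks^{\mathrm{Kato}}) \ = \ \partial^{(i)}(\kn) - \partial^{(\infty)}(\kn), \qquad \mathrm{ord}(\ks^{\mathrm{Kato}}) \ = \ \mathrm{ord}(\kn),
\]
and in particular the hypothesis $\mathrm{ord}(\kn) < \infty$ gives the non-triviality of $\ks^{\mathrm{Kato}}$.

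Second, I would invoke the Mazur--Rubin structure theorem for Kolyvagin systems over the discrete valuation ring $\mathbb{Z}_p$ (\cite{mazur-rubin-book}, Theorem 5.2.2). The non-triviality of $\ks^{\mathrm{Kato}}$ implies $\mathrm{Fitt}_{i, \mathbb{Z}_p}(\mathrm{Sel}(\mathbb{Q}, E[p^\infty])^\vee) = 0$ for $i < \mathrm{ord}(\ks^{\mathrm{Kato}})$, and for every $i \geq \mathrm{ord}(\ks^{\mathrm{Kato}})$ with $i \equiv \mathrm{ord}(\ks^{\mathrm{Kato}}) \pmod{2}$ the $i$-th Fitting ideal equals $p^{\partial^{(i)}(\ks^{\mathrm{Kato}}) - \partial^{(\infty)}(\ks^{\mathrm{Kato}})}\mathbb{Z}_p$. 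Transporting these along the first step yields (1), (2) and (3) directly. The elementary-divisor decomposition in the final display then follows by converting the sequence of Fitting ideals into invariant factors over the PID $\mathbb{Z}_p$; the jump of $2$ in the indexing is precisely the source of the multiplicity $\oplus 2$, and the divisible part of rank $\mathrm{ord}(\kn)$ comes from the first vanishing block.

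The main obstacle is the integral comparison in the first step. It is not enough to know that the dual exponential of $\kappa^{\mathrm{Kato}}_n$ matches $\widetilde{\delta}_n$ up to \emph{some} unit; one needs a \emph{uniform} unit in $n$ so that the shift by $\partial^{(\infty)}$ is well defined and cancellations are intact. This requires an explicit Kolyvagin-derivative computation at each $\ell \mid n$ matching the finite--singular decomposition against the Euler factor at $\ell$, together with integrality of Kato's interpolation at $p$; hypothesis (b) on the Manin constant is what allows the comparison to hold on the nose rather than up to a denominator. This is the content of the structural refinement of the Birch and Swinnerton-Dyer conjecture established in \cite{kim-structure-selmer}, of which the present theorem is a restatement.
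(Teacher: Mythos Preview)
Your proposal is correct and aligns with the paper's treatment: the paper does not give an independent proof but simply cites \cite{kim-structure-selmer}, and you correctly identify the theorem as a restatement of the structural refinement established there, sketching the underlying mechanism (Kato's Kolyvagin system, Mazur--Rubin's structure theorem, and the dual-exponential comparison with Kurihara numbers) that \cite{kim-structure-selmer} develops in detail.
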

\begin{proof}
See \cite{kim-structure-selmer}.
\end{proof}

\subsection{Heegner point Kolyvagin systems} \label{subsec:heegner-point-kolyvagin-systems}
We review the main result of \cite{kolyvagin-selmer}.
We keep all the working hypotheses in $\S$\ref{subsec:working-hypotheses} and assume that $\nu(N^-)$ is even.
\subsubsection{Convention}
Let $X^{N^-}(N^+)$ be the (compactified) Shimura curve associated to the indefinite quaternion algebra of discriminant $N^-$ and an Eichler order of level $N^+$.
Let $\ks^{\mathrm{Heeg}}$ be the \textbf{Heegner point Kolyvagin system} coming from Heegner points on $X^{N^-}(N^+)$. 
See \cite{howard-kolyvagin, nekovar-euler-systems, burungale-castella-kim} for the explicit construction of  $\ks^{\mathrm{Heeg}}$.
Let $\mathrm{Sel}(K, E[p^\infty])^{\pm}$ be the eigenspace with eigenvalue $\pm 1$ of $\mathrm{Sel}(K, E[p^\infty])$ with respect to the complex conjugation, respectively.
Since $p >2$, we have decomposition
$$\mathrm{Sel}(K, E[p^\infty]) = \mathrm{Sel}(K, E[p^\infty])^{+} \oplus \mathrm{Sel}(K, E[p^\infty])^{-} .$$
Write
\[
\xymatrix{
r(E/K) = \mathrm{cork}_{\mathbb{Z}_p} \mathrm{Sel}(K, E[p^\infty]), & r^{\pm}(E/K) = \mathrm{cork}_{\mathbb{Z}_p} \mathrm{Sel}(K, E[p^\infty])^{\pm} .
}
\]
Denote by  $W(E/\mathbb{Q})$ the root number of $E$ over $\mathbb{Q}$.
\subsubsection{Structure theorem}
\begin{thm} \label{thm:kolyvagin-vanishing-order}
Suppose that $\ks^{\mathrm{Heeg}}$ is non-trivial.
Then the following statements are valid.
\begin{enumerate}
\item $\mathrm{ord}(\ks^{\mathrm{Heeg}}) = \mathrm{max} \lbrace r^+(E/K), r^-(E/K) \rbrace - 1$.
\item $r^{W(E/\mathbb{Q}) \cdot (-1)^{\mathrm{ord}(\ks^{\mathrm{Heeg}}) +1}} (E/K)= \mathrm{ord}(\ks^{\mathrm{Heeg}}) +1$.
\item $\mathrm{ord}(\ks^{\mathrm{Heeg}}) - r^{W(E/\mathbb{Q}) \cdot (-1)^{\mathrm{ord}(\ks^{\mathrm{Heeg}}) }} (E/K) \geq 0$ and is even.
\end{enumerate}
\end{thm}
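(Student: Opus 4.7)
The plan is to recover the three assertions from the sign relations of the Heegner point Kolyvagin system together with the standard Kolyvagin/Mazur--Rubin bound for the Selmer group cut out by a nontrivial Kolyvagin system. Let $\tau$ denote complex conjugation for $K/\mathbb{Q}$, and set $r := \mathrm{ord}(\ks^{\mathrm{Heeg}})$ and $\epsilon := W(E/\mathbb{Q}) \cdot (-1)^{r+1}$. The first thing I would record is the sign relation: the base class $\kappa^{\mathrm{Heeg}}_1$ lies in the $-W(E/\mathbb{Q})$-eigenspace of $H^1(K,T)$ under $\tau$, and at each Kolyvagin level the derivative operator $D_\ell$ flips the sign because the primes in $\mathcal{P}^{\mathrm{ac}}_1$ are inert in $K/\mathbb{Q}$. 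Hence $\tau\cdot \kappa^{\mathrm{Heeg}}_n = \epsilon_n \cdot \kappa^{\mathrm{Heeg}}_n$ with $\epsilon_n = W(E/\mathbb{Q}) \cdot (-1)^{\nu(n)+1}$. In particular, any $n \in \mathcal{N}^{\mathrm{ac}}_1$ realizing the minimum $\nu(n) = r$ with $\kappa^{\mathrm{Heeg}}_n \neq 0$ contributes a nonzero class in the $\epsilon$-eigenspace of $\mathrm{Sel}_{\mathcal{F}_{\mathrm{cl}}(n)}(K,T/I^{\mathrm{ac}}_n T)$.

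Next I would apply the Kolyvagin system bound in the Heegner point setting (Kolyvagin \cite{kolyvagin-selmer}, Howard's Kolyvagin system axioms \cite{howard-kolyvagin}, and Nekov\'{a}\v{r}'s refinement). The machinery is Poitou--Tate duality combined with the Chebotarev-type argument that, for any $n$ with $\kappa^{\mathrm{Heeg}}_n \neq 0 $ (modulo $p$) and any $m \in \mathcal{N}^{\mathrm{ac}}_1$, one can enlarge $n$ by an appropriate prime $\ell$ whose Frobenius conditions kill a chosen element of the dual Selmer group. Iterating this along with the minimality $\nu(n)=r$ at which nonvanishing first occurs gives simultaneously an upper bound $\mathrm{cork}\,\mathrm{Sel}(K,E[p^\infty])^{\epsilon} \le r+1$ and a lower bound of the same shape, forcing $r^{\epsilon}(E/K) = r+1$. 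This is exactly assertion (2). Assertion (1) then follows: on the one hand $\max\{r^+,r^-\} \ge r^{\epsilon} = r+1$; on the other hand, if $\max\{r^+,r^-\} \ge r+2$, the same Kolyvagin descent (applied after localizing away from the opposite eigenspace) would produce a nonvanishing derivative class at a level $n'$ with $\nu(n') < r$, contradicting minimality of $r$.

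For assertion (3), the bound $\mathrm{cork}\,\mathrm{Sel}(K,E[p^\infty])^{-\epsilon} \le r$ is immediate from the previous step, since otherwise running the Kolyvagin descent inside the $(-\epsilon)$-eigenspace would again drop $r$. The parity statement is the more delicate part: I would deduce it from the Flach/Cassels--Tate style skew-symmetric pairing on the $p$-part of the Tate--Shafarevich/finite Selmer quotient, which, combined with the $\tau$-equivariance (and the fact that $\tau$ interchanges the two eigenspaces of the pairing appropriately), forces $r - r^{-\epsilon}(E/K)$ to be even. Equivalently, one uses Howard's ``even/odd'' dichotomy for anticyclotomic Kolyvagin systems, which asserts that the stepwise drop of Fitting ideals between consecutive levels of the same parity is always balanced by an equal drop, so that the excess of the $\epsilon$-corank over the $(-\epsilon)$-corank is odd.

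The main obstacle is the parity claim (3): items (1) and (2) are direct corollaries of the Kolyvagin system bound once the sign relation is in hand, but the parity requires either (i) a careful analysis of the Cassels--Tate pairing on $\mathrm{Sel}(K,E[p^\infty])_{/\mathrm{div}}$ restricted to the $(-\epsilon)$-eigenspace, or (ii) a fully developed ``stub'' Kolyvagin system formalism that records how Fitting ideals evolve with parity of $\nu(n)$. Under hypothesis CR and the surjectivity of $\overline{\rho}$, both routes are available, and I would give the argument via (i) since it is closer to Kolyvagin's original treatment in \cite{kolyvagin-selmer}.
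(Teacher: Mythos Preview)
The paper does not give a proof; it simply cites \cite[Theorem~4]{kolyvagin-selmer} and \cite[Theorems~1.2 and~11.2]{wei-zhang-mazur-tate}. So there is no in-text argument to compare against, and the question is whether your sketch stands on its own.

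Your ingredients are the correct ones---the sign relation $\tau \cdot \kappa^{\mathrm{Heeg}}_n = \epsilon_n\,\kappa^{\mathrm{Heeg}}_n$ with $\epsilon_n = W(E/\mathbb{Q})\cdot(-1)^{\nu(n)+1}$, the Kolyvagin global-duality bound, and a parity input---but one of your implications runs in the wrong direction and the hard steps are not justified at the level you give.

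The most concrete problem is your argument for (1). You write that if $\max\{r^+,r^-\} \ge r+2$ then ``the same Kolyvagin descent \ldots\ would produce a nonvanishing derivative class at a level $n'$ with $\nu(n') < r$''. This is backwards: global duality applied to a nonzero $\kappa^{\mathrm{Heeg}}_n$ bounds the Selmer corank from \emph{above}; it never manufactures nonzero Kolyvagin classes out of a large Selmer group. The converse you invoke would amount to primitivity or rigidity of the Heegner Kolyvagin system, which is not available in general (compare Remark~\ref{rem:error-term-heegner}, where the undetermined piece $\mathrm{Err}$ records precisely what $\ks^{\mathrm{Heeg}}$ fails to see). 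The actual route to (1) is to establish (2) and (3) first and then read off $r^{-\epsilon}\le r < r+1 = r^{\epsilon}$.

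Likewise, your treatment of the lower bound $r^{\epsilon}\ge r+1$ in (2) (``a lower bound of the same shape'') and of the parity in (3) is too schematic. Nondegeneracy of the Cassels--Tate pairing forces the \emph{torsion} part of each eigenspace to have square order, but the statement that $r - r^{-\epsilon}$ is even is a finer fact about how the classes $\kappa^{\mathrm{Heeg}}_n$ interact with the $\pm$-filtration as $\nu(n)$ increases; it does not fall out of the pairing alone. A complete argument really does require Kolyvagin's full inductive analysis (or its reformulation via Howard's structure theorem), which is why the paper defers to the cited references rather than reproducing it.
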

\begin{proof}
See \cite[Theorem 4]{kolyvagin-selmer} and also \cite[Theorems 1.2 and 11.2]{wei-zhang-mazur-tate}.
\end{proof}
By Theorem \ref{thm:kolyvagin-vanishing-order}, we have  
\begin{equation} \label{eqn:corank-difference}
\mathrm{ord}(\ks^{\mathrm{Heeg}}) - r^{ W(E/\mathbb{Q}) \cdot (-1)^{\mathrm{ord}(\ks^{\mathrm{Heeg}}) }} (E/K)  = \vert r^{+}(E/K) - r^{-}(E/K) \vert - 1 .
\end{equation}
In particular, 
 $\mathrm{ord}(\ks^{\mathrm{Heeg}}) - r^{ W(E/\mathbb{Q}) \cdot (-1)^{\mathrm{ord}(\ks^{\mathrm{Heeg}}) }} (E/K) = 0$ if and only if
$\vert r^{+}(E/K) - r^{-}(E/K) \vert =  1$.
Write
\begin{equation} \label{eqn:structure-selmer-before}
\mathrm{Sel}(K, E[p^\infty])^{\pm}_{/\mathrm{div}} \simeq \bigoplus_{i \geq 1} \left( \mathbb{Z} / p^{a^{\pm}_i} \mathbb{Z} \right)^{\oplus 2}
\end{equation}
where $a^{\pm}_1 \geq  a^{\pm}_2 \geq  \cdots$.
\begin{thm} \label{thm:structure-kolyvagin}
Suppose that $\ks^{\mathrm{Heeg}}$ is non-trivial.
Then $a^{\pm}_i$'s in (\ref{eqn:structure-selmer-before}) are determined by $\ks^{\mathrm{Heeg}}$ as follows:
\begin{align*}
a^{W(E/\mathbb{Q}) \cdot (-1)^{\mathrm{ord}(\ks^{\mathrm{Heeg}})+1}}_i & = \partial^{(\mathrm{ord}(\ks^{\mathrm{Heeg}}) + 2i -1)} (\ks^{\mathrm{Heeg}}) - \partial^{(\mathrm{ord}(\ks^{\mathrm{Heeg}}) + 2i)} (\ks^{\mathrm{Heeg}} ) , \\
a^{ W(E/\mathbb{Q}) \cdot (-1)^{\mathrm{ord}(\ks^{\mathrm{Heeg}})}}_{i + \vert r^{+}(E/K) - r^{-}(E/K) \vert - 1 } & = \partial^{(\mathrm{ord}(\ks^{\mathrm{Heeg}}) + 2i -2)} (\ks^{\mathrm{Heeg}}) - \partial^{(\mathrm{ord}(\ks^{\mathrm{Heeg}}) + 2i-1)} (\ks^{\mathrm{Heeg}} )
\end{align*}
for $i \geq 1$.
\end{thm}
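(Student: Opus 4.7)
The plan is to derive this from Kolyvagin's structural refinement of his Selmer bound, as reformulated in \cite{kolyvagin-selmer} and \cite{wei-zhang-mazur-tate}. Theorem \ref{thm:kolyvagin-vanishing-order} already fixes $\mathrm{ord}(\ks^{\mathrm{Heeg}})$ and the eigenspace coranks $r^{\pm}(E/K)$; what remains is to extract the elementary divisors $a^{\pm}_i$ of the finite parts $\mathrm{Sel}^{\pm}(K, E[p^\infty])_{/\mathrm{div}}$ from the successive differences of the divisibility invariants $\partial^{(j)}(\ks^{\mathrm{Heeg}})$.

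The first ingredient is the complex-conjugation decomposition. Because $p > 2$, we have $\mathrm{Sel}(K, E[p^\infty]) = \mathrm{Sel}^{+} \oplus \mathrm{Sel}^{-}$, and each derived class $\kappa^{\mathrm{Heeg}}_n$ lies in a single eigenspace whose sign is governed by $W(E/\mathbb{Q})$ and the parity of $\nu(n)$. Increasing $\nu(n)$ by one flips the eigenspace, so the sequence $\{\partial^{(j)}(\ks^{\mathrm{Heeg}})\}_j$ alternately produces bounds for $\mathrm{Sel}^{+}_{/\mathrm{div}}$ and for $\mathrm{Sel}^{-}_{/\mathrm{div}}$. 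This is exactly the alternation visible in the theorem: the formula for $a^{W(E/\mathbb{Q}) \cdot (-1)^{\mathrm{ord}(\ks^{\mathrm{Heeg}})+1}}_i$ uses gaps between $\partial^{(\mathrm{ord}+2i-1)}$ and $\partial^{(\mathrm{ord}+2i)}$, while the formula for the opposite eigenspace uses the complementary gaps.

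The second ingredient is the Cassels--Tate pairing on $\mathrm{Sel}(K, E[p^\infty])_{/\mathrm{div}}$. It is alternating and equivariant (up to sign) for complex conjugation, hence restricts to a non-degenerate alternating pairing on each $\mathrm{Sel}^{\pm}_{/\mathrm{div}}$. This forces every elementary divisor to appear with even multiplicity, producing the $(\mathbb{Z}/p^{a^{\pm}_i}\mathbb{Z})^{\oplus 2}$ blocks of (\ref{eqn:structure-selmer-before}).

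The third and main ingredient is an inductive matching of divisibilities, exactly as in Kolyvagin's original argument refined by Mazur--Rubin and Wei Zhang. Working downward from large $j$, the bound $\partial^{(j)}(\ks^{\mathrm{Heeg}})$ together with the Kolyvagin system relations produces classes of prescribed $p$-divisibility in $\mathrm{Sel}^{\pm}$ via local Tate duality at the primes $q \mid n$, and global duality (Poitou--Tate) upgrades the resulting lower bounds to equalities. The shift $\mathrm{ord}(\ks^{\mathrm{Heeg}}) - r^{W(E/\mathbb{Q}) \cdot (-1)^{\mathrm{ord}(\ks^{\mathrm{Heeg}})}}(E/K)$ in the second formula, which by Corollary \ref{cor:corank-difference} measures the gap between the two eigenspace coranks, encodes the fact that the smaller eigenspace only begins receiving elementary divisors once enough Heegner classes of the correct conjugation sign become available. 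The main obstacle will be bookkeeping: tracking the sign and parity conventions carefully enough that each jump $\partial^{(j-1)}(\ks^{\mathrm{Heeg}}) - \partial^{(j)}(\ks^{\mathrm{Heeg}})$ is assigned to the correct eigenspace and index so as to reproduce the exact identities in the statement.
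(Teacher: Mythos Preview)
The paper does not give an argument here: its proof consists solely of the citations ``See \cite[Theorem 1]{kolyvagin-selmer}. See also \cite[Remark 18]{wei-zhang-mazur-tate}.'' Your proposal is an expository sketch of precisely those cited results---complex-conjugation splitting of the Heegner classes by parity of $\nu(n)$, the Cassels--Tate pairing forcing paired elementary divisors, and Kolyvagin's inductive extraction of the $a^{\pm}_i$ from the divisibility jumps---so the approach is the same, only spelled out rather than deferred.
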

\begin{proof}
See \cite[Theorem 1]{kolyvagin-selmer}. See also \cite[Remark 18]{wei-zhang-mazur-tate}.
\end{proof}
\begin{rem} \label{rem:error-term-heegner}
We do not know whether $a^{  W(E/\mathbb{Q}) \cdot (-1)^{\mathrm{ord}(\ks^{\mathrm{Heeg}})}}_i$'s are determined by $\ks^{\mathrm{Heeg}}$ or not
for $1 \leq i \leq  \vert r^{+}(E/K) - r^{-}(E/K) \vert $.
Write
$$\mathrm{Err} = \bigoplus_i \mathbb{Z}/p^{a^{ W(E/\mathbb{Q}) \cdot (-1)^{\mathrm{ord}(\ks^{\mathrm{Heeg}})}}_i}\mathbb{Z}$$
where $i$ runs over $1 \leq i \leq  \vert r^{+}(E/K) - r^{-}(E/K) \vert $. 
Then we have
$$\mathrm{length}_{\mathbb{Z}_p} \left( \mathrm{Sel}(K, E[p^\infty])_{/\mathrm{div}} \right) = 2 \cdot \left( \partial^{(\mathrm{ord}(\ks^{\mathrm{Heeg}}))}(  \ks^{\mathrm{Heeg}} )  - \partial^{(\infty)}(  \ks^{\mathrm{Heeg}} ) + \mathrm{length}_{\mathbb{Z}_p}( \mathrm{Err} )  \right) .$$
Note that $\vert r^{+}(E/K) - r^{-}(E/K) \vert$ can be written purely in terms of $\ks^{\mathrm{Heeg}}$ by (\ref{eqn:corank-difference}).
\end{rem}

\section{Bipartite Euler systems for elliptic curves and the structure theorem} \label{sec:bipartite-euler-systems}
In this section, we deal with the counterpart of Kolyvagin's work on the structure of Selmer groups reviewed in \S\ref{subsec:heegner-point-kolyvagin-systems}.
Let $E$ be an elliptic curve over the rationals, $p \geq 5$ a \emph{good} reduction prime for $E$, and $K$ an imaginary quadratic field.
When the (generalized) Heegner hypothesis for $(E,K)$ is satisfied, 
Theorem \ref{thm:structure-kolyvagin} describes the module structure of 
$\mathrm{Sel}(K, E[p^\infty])$
in terms of the divisibilities of the (non-trivial) Heegner point Kolyvagin system under mild assumptions as we have seen.

The opposite case is studied in this section. In other words, the root number of $E/K$ is 1.
The main result is the description of the structure of $\mathrm{Sel}(K, E[p^\infty])$ in terms of the values of certain families of quaternionic automorphic forms at Gross points (Theorem \ref{thm:structure-bipartite}).

\subsection{Formalism of the theory of bipartite Euler systems} \label{subsec:formalism-bipartite}
We first review the theory of bipartite Euler systems \emph{over the imaginary quadratic field} closely following \cite{howard-bipartite}.
Then we prove the structure theorem of Selmer groups (Theorem \ref{thm:structure-pk-bipartite}).
We fix one principal Artinian local ring $R$ as the coefficient ring in this subsection.
\subsubsection{Preliminaries}
Let $K$ be an imaginary quadratic field and $G_K$ the absolute Galois group of $K$.
Let $R$ be a principal Artinian local ring with maximal ideal $\mathfrak{m}$ and residue characteristic $p > 3$ (e.g. $R = \mathbb{Z}/p^k\mathbb{Z} $).
Let $T$ be a free $R$-module of rank two endowed with continuous action of $G_K$ such that there exists a perfect, $G_K$-equivariant, and alternating pairing 
$$T  \times T \to R(1).$$
\begin{rem} \label{rem:notation-T}
The notation $T$ in this subsection is \emph{not} a Galois representation with a coefficient ring of characteristic zero.
\end{rem}
If $B$ is any $R$-module and $b \in B$ we define $\mathrm{ind}(b, B)$, the index of divisibility of $b$ in $B$,
to be the largest $k \leq \infty$ such that $b \in \mathfrak{m}^k B$.
\subsubsection{Selmer groups and variants} \label{subsubsec:selmer-structure}
A Selmer structure $(\mathcal{F}, \Sigma)$ on $T$ consists of
\begin{itemize}
\item a finite set $\Sigma$ of places of $K$ containing the archimedean places, the primes where $T$ is ramified, and the prime $p$. 
\item a choice of submodule $\mathrm{H}^1_{\mathcal{F}}(K_v, T) \subseteq \mathrm{H}^1(K_v, T)$ for every place $v$ of $K$ such that
$\mathrm{H}^1_{\mathcal{F}}(K_v, T) = \mathrm{H}^1_{\mathrm{ur}}(K_v, T)$ for all $v \not\in \Sigma$.
\end{itemize}
 The \textbf{Selmer group $\mathrm{Sel}_{\mathcal{F}}(K, T)$ associated to $\mathcal{F}$} is defined by the exact sequence
 \[
\xymatrix{
0 \ar[r] & \mathrm{Sel}_{\mathcal{F}}(K, T) \ar[r] & \mathrm{H}^1(K, T) \ar[r] & \bigoplus_{v} \dfrac{\mathrm{H}^1(K_v, T)}{\mathrm{H}^1_{\mathcal{F}}(K_v, T)}
} 
 \]
where the direct sum runs over all places $v$ of $K$.
A Selmer structure $\mathcal{F}$ is self-dual if $\mathrm{H}^1_{\mathcal{F}}(K_v, T)$ is maximal isotropic under the (symmetric) local Tate pairing for every finite place $v$ of $K$.
Since $p > 2$, $\mathrm{H}^1(K_v , T) = 0$ for $v$ archimedean.

We assume that we are given a fixed self-dual Selmer structure throughout this section.

For a submodule or a quotient $S$ of $T$ and the Selmer structure $\mathcal{F}$ on $T$, there exists an induced Selmer structure on $S$ defined by using the natural map $\mathrm{H}^1(K_v, S) \to \mathrm{H}^1(K_v, T)$ or
$\mathrm{H}^1(K_v, T) \to \mathrm{H}^1(K_v, S)$, respectively. See \cite[Remark 2.1.3]{howard-bipartite} for details.

A rational prime $\ell$ is an \textbf{admissible prime for $(T, K)$} if
\begin{enumerate}
\item $\ell$ is coprime to the residue characteristic $p$,
\item $T$ is unramified at $\ell$,
\item $\ell$ is inert in $K/\mathbb{Q}$,
\item $\ell^2 \not\equiv 1 \pmod{p}$,
\item $a_\ell = \pm (\ell+1)$ in $R$
where $a_\ell$ is the trace of the arithmetic Frobenius at $\ell$ acting on $T$.
\end{enumerate}
When $R = \mathbb{Z}/p^k\mathbb{Z}$, $\ell$ is called a $k$-admissible prime, and write $\ell \in \mathcal{P}^{\mathrm{adm}}_k$.
Denote by $\mathcal{P}^{\mathrm{adm}}$ the set of admissible primes for $(T,K)$ and by $\mathcal{N}^{\mathrm{adm}}$ the set of square-free products of primes in $\mathcal{P}^{\mathrm{adm}}$.
By the definition of admissible primes, we have unique decomposition
$T \simeq R \oplus R(1)$
as $G_{K_\ell}$-modules where $\ell \in \mathcal{P}^{\mathrm{adm}}$.
By using the decomposition, we define the ordinary cohomology $\mathrm{H}^1_{\mathrm{ord}}(K_{\ell}, T)$ by
the image of $\mathrm{H}^1(K_{\ell}, R(1))$  in $\mathrm{H}^1(K_{\ell}, T)$ for each $\ell \in \mathcal{P}^{\mathrm{adm}}$.

\begin{lem}
For each $\ell \in \mathcal{P}^{\mathrm{adm}}$, the decomposition $T \simeq R \oplus R(1)$
 induces a decomposition
$$\mathrm{H}^1(K_{\ell}, T)  \simeq \mathrm{H}^1_{\mathrm{ur}}(K_{\ell}, T) \oplus \mathrm{H}^1_{\mathrm{ord}}(K_{\ell}, T)  $$
in which each summand is free of rank one over $R$ and is maximal isotropic under the local Tate pairing.
\end{lem}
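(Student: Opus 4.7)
The plan is to deduce everything from the splitting $T \simeq R \oplus R(1)$ as $G_{K_\ell}$-modules. First, since $\mathrm{H}^1(K_\ell, -)$ commutes with finite direct sums, the given decomposition immediately yields
$$\mathrm{H}^1(K_\ell, T) \simeq \mathrm{H}^1(K_\ell, R) \oplus \mathrm{H}^1(K_\ell, R(1)).$$
The second summand realizes $\mathrm{H}^1_{\mathrm{ord}}(K_\ell, T)$ by definition. To identify the first summand with $\mathrm{H}^1_{\mathrm{ur}}(K_\ell, T)$, I would invoke the description $\mathrm{H}^1_{\mathrm{ur}}(K_\ell, M) = M/(\mathrm{Frob}_{K_\ell} - 1)M$ valid for any unramified $G_{K_\ell}$-module $M$: the Frobenius acts trivially on $R$, whereas on $R(1)$ it acts as $\ell^2$, and admissibility ($\ell^2 \not\equiv 1 \pmod{p}$) makes $\ell^2 - 1$ a unit of $R$, so the unramified part of the $R(1)$-summand vanishes. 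Hence the entire unramified piece of $\mathrm{H}^1(K_\ell, T)$ lies in the first summand.

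Next I would verify that each summand is free of rank one. For $\ell \ne p$ the local Euler–Poincar\'e characteristic is trivial, and local Tate duality gives $\mathrm{H}^0(K_\ell, R) = R$ and $\mathrm{H}^2(K_\ell, R) \simeq \mathrm{H}^0(K_\ell, R(1))^\vee = R[\ell^2 - 1]^\vee = 0$, so $\mathrm{H}^1(K_\ell, R) \simeq R$; symmetrically $\mathrm{H}^0(K_\ell, R(1)) = 0$ and $\mathrm{H}^2(K_\ell, R(1)) \simeq R$, so $\mathrm{H}^1(K_\ell, R(1)) \simeq R$ as well. Each summand is cyclic of the same length as $R$ and therefore free of rank one over the principal Artinian local ring $R$.

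For the maximal-isotropy claim I would use the coefficient pairing. The alternating, $G_{K_\ell}$-equivariant pairing $T \otimes T \to R(1)$ restricts to pairings $R \otimes R \to R(1)$ and $R(1) \otimes R(1) \to R(1)$, classified respectively by $R(1)^{G_{K_\ell}} = R[\ell^2 - 1]$ and $R(-1)^{G_{K_\ell}} = R[\ell^{-2} - 1]$, both of which vanish by admissibility. Consequently the cup-product pairings induced on $\mathrm{H}^1(K_\ell, R)$ and on $\mathrm{H}^1(K_\ell, R(1))$ vanish, so each summand is isotropic. Each has length equal to that of $R$, exactly half the length of $\mathrm{H}^1(K_\ell, T)$; combined with the perfectness of the local Tate pairing, this forces each isotropic summand to equal its own annihilator and hence to be maximal isotropic.

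The main obstacle, such as it is, is the maximal-isotropy step: one has to notice that admissibility in the precise form $\ell^2 \not\equiv 1 \pmod{p}$ does double duty, simultaneously killing both diagonal restrictions of the coefficient pairing \emph{and} guaranteeing that each summand has exactly half the length of $\mathrm{H}^1(K_\ell, T)$, which is what upgrades isotropy to \emph{maximal} isotropy via local Tate duality.
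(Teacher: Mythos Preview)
Your argument is correct and is essentially the standard proof; the paper itself does not give details but simply cites \cite[Lemma 2.2.1]{howard-bipartite}, so there is nothing substantive to compare.

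One small point deserves tightening. From the local Euler--Poincar\'e formula you correctly deduce that $\mathrm{length}_R\,\mathrm{H}^1(K_\ell, R(1)) = \mathrm{length}_R\,R$, but this alone does not give $\mathrm{H}^1(K_\ell, R(1)) \simeq R$: over a principal Artinian local ring a module of length $\mathrm{length}(R)$ need not be cyclic (think of $(\mathbb{Z}/p)^2$ over $\mathbb{Z}/p^2$). You assert cyclicity in the next sentence but do not justify it. The quickest fix is local Tate duality: $\mathrm{H}^1(K_\ell, R(1))$ is Pontryagin dual to $\mathrm{H}^1(K_\ell, R^\vee)$, and since $R^\vee$ carries trivial Galois action your own unramified computation shows $\mathrm{H}^1(K_\ell, R^\vee)\simeq R^\vee$, which is cyclic; dualizing gives cyclicity of $\mathrm{H}^1(K_\ell, R(1))$. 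Alternatively, inflation--restriction identifies $\mathrm{H}^1(K_\ell, R(1))$ with $\mathrm{Hom}(I_\ell, R(1))^{\mathrm{Frob}}$, which is visibly cyclic since the relevant pro-$p$ tame quotient of $I_\ell$ is procyclic. With either addition your proof is complete, and the isotropy argument via the vanishing of the diagonal restrictions of the coefficient pairing is exactly the right idea.
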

\begin{proof}
See \cite[Lemma 2.2.1]{howard-bipartite}.
\end{proof}
A Selmer structure $\mathcal{F}$ is \textbf{cartesian} if for every quotient $T/\mathfrak{m}^i T$ of $T$, every place $v \in \Sigma$, and any generator $\pi \in \mathfrak{m}$, the isomorphism 
$$\times \pi^{\mathrm{length}(R) - i}: T/\mathfrak{m}^i T \to T[\mathfrak{m}^i]$$
induces an isomorphism
$$\mathrm{H}^1_{\mathcal{F}}(K_v, T/\mathfrak{m}^i T)
\simeq \mathrm{H}^1_{\mathcal{F}}(K_v, T[\mathfrak{m}^i]) .$$
\begin{assu} \label{assu:abs-irred-cartesian}
Throughout this section, we make the following assumptions:
\begin{enumerate}
\item The residual representation $T/\mathfrak{m}T$ is absolutely irreducible.
\item $\mathcal{F}$ is cartesian.
\end{enumerate}
\end{assu}
For any square-free integer $a \cdot b$ and any $c \in \mathcal{N}^{\mathrm{adm}}$, the Selmer structure $( \mathcal{F}^a_b(c) , \Sigma_{\mathcal{F}^a_b(c)} )$ is defined by
\begin{itemize}
\item $\Sigma_{\mathcal{F}^a_b(c)} = \Sigma \cup \left\lbrace q, \textrm{ a prime} : q \textrm{ divides } a \cdot b \cdot c \right\rbrace$,
\item $ \mathrm{H}^1_{\mathcal{F}^a_b(c)}(K_v, T) =  \mathrm{H}^1_{\mathcal{F}}(K_v, T)$ for $v$ prime to $a \cdot b \cdot c$, and
\item $\mathrm{H}^1_{\mathcal{F}^a_b(c)}(K_v, T)
=
\left\lbrace
\begin{array}{ll}
 \mathrm{H}^1(K_v, T) & \textrm{ if }  v \vert a \\
 0 & \textrm{ if } v \vert b \\
 \mathrm{H}^1_{\mathrm{ord}}(K_v, T) & \textrm{ if }  v \vert c
\end{array} \right.$.
\end{itemize}
If any one of $a$, $b$, $c$ is the empty product, we omit it from the notation.

\begin{lem}
The Selmer structure $\mathcal{F}(n)$ is cartesian for any $n \in \mathcal{N}^{\mathrm{adm}}$.
For any choice of generator $\pi \in \mathfrak{m}$ and any $0 \leq i \leq \mathrm{length}(R)$, the composition
\[
\xymatrix{
T/\mathfrak{m}^iT \ar[rr]^{\pi^{\times \mathrm{length}(R) - i}} & & T[\mathfrak{m}^i] \ar[r] & T
}
\]
induces isomorphisms
$$\mathrm{Sel}_{\mathcal{F}(n)}(K, T/\mathfrak{m}^iT) \simeq
\mathrm{Sel}_{\mathcal{F}(n)}(K, T[\mathfrak{m}^i]) \simeq
\mathrm{Sel}_{\mathcal{F}(n)}(K, T)[\mathfrak{m}^i].$$
\end{lem}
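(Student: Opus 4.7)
The plan is to verify the cartesian property of $\mathcal{F}(n)$ place by place, and then to derive the two displayed isomorphisms from this property together with the absolute irreducibility of $T/\mathfrak{m}T$ in Assumption \ref{assu:abs-irred-cartesian}. For the cartesian claim, I would fix $v \in \Sigma_{\mathcal{F}(n)}$ and argue that multiplication by $\pi^{\mathrm{length}(R)-i}$ identifies $\mathrm{H}^1_{\mathcal{F}(n)}(K_v, T/\mathfrak{m}^iT)$ with $\mathrm{H}^1_{\mathcal{F}(n)}(K_v, T[\mathfrak{m}^i])$. When $v \nmid n$, the local condition at $v$ for $\mathcal{F}(n)$ coincides with that of $\mathcal{F}$, and the required isomorphism is exactly the cartesian hypothesis on $\mathcal{F}$. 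When $v = \ell \mid n$, the local condition is $\mathrm{H}^1_{\mathrm{ord}}(K_\ell, \cdot)$, defined via the canonical $G_{K_\ell}$-module decomposition $T \simeq R \oplus R(1)$ provided by the admissibility of $\ell$. This decomposition is functorial in $T$ and descends to compatible decompositions of $T/\mathfrak{m}^iT$ and $T[\mathfrak{m}^i]$ that are respected by $\pi^{\mathrm{length}(R)-i}$, so the cartesian behavior at $\ell$ reduces to the evident cartesian behavior of $\mathrm{H}^1(K_\ell, R(1))$ under $R$-module truncation.

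Granted the cartesian property, the first isomorphism is then formal: the multiplication map $\pi^{\mathrm{length}(R)-i}: T/\mathfrak{m}^iT \to T[\mathfrak{m}^i]$ induces compatible isomorphisms of global cohomology and of every local condition for $\mathcal{F}(n)$, and intersecting in the defining sequence of the Selmer group yields $\mathrm{Sel}_{\mathcal{F}(n)}(K, T/\mathfrak{m}^iT) \simeq \mathrm{Sel}_{\mathcal{F}(n)}(K, T[\mathfrak{m}^i])$.

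For the second isomorphism, I would use the exact sequence $0 \to T[\mathfrak{m}^i] \to T \xrightarrow{\pi^i} T$ of $G_K$-modules. Since $T/\mathfrak{m}T$ is absolutely irreducible and nontrivial, $(T/\mathfrak{m}T)^{G_K} = 0$; a Nakayama-type argument using the filtration $\mathfrak{m}^jT/\mathfrak{m}^{j+1}T \simeq T/\mathfrak{m}T$ then forces $T^{G_K} = 0$, and the long exact cohomology sequence produces a canonical injection $\mathrm{H}^1(K, T[\mathfrak{m}^i]) \hookrightarrow \mathrm{H}^1(K, T)$ whose image is precisely the $\mathfrak{m}^i$-torsion. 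To upgrade this to an isomorphism of Selmer groups, one checks at each $v \in \Sigma_{\mathcal{F}(n)}$ that $\mathrm{H}^1_{\mathcal{F}(n)}(K_v, T[\mathfrak{m}^i])$ corresponds to $\mathrm{H}^1_{\mathcal{F}(n)}(K_v, T)[\mathfrak{m}^i]$; this local compatibility follows from the cartesian property at places away from $n$ and from the explicit decomposition $T \simeq R \oplus R(1)$ at each $\ell \mid n$. The main obstacle I anticipate is precisely this last local compatibility at primes dividing $n$: since $T[\mathfrak{m}^i]$ sits inside $T$ while the ordinary condition is defined via the $R(1)$-summand of $T$ itself, one has to trace $\mathrm{H}^1_{\mathrm{ord}}(K_\ell, T[\mathfrak{m}^i])$ inside $\mathrm{H}^1_{\mathrm{ord}}(K_\ell, T)$ and identify its image with the $\mathfrak{m}^i$-torsion, which is clean on each summand but easily mishandled unless the functorial maps are tracked carefully.
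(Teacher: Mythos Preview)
Your approach is correct and in fact supplies the details that the paper omits entirely: the paper's own ``proof'' is just a citation to \cite[Lemma 2.2.6]{howard-bipartite}. The place-by-place verification of the cartesian property (using the assumed cartesian hypothesis on $\mathcal{F}$ at places $v \nmid n$, and the explicit $G_{K_\ell}$-splitting $T \simeq R \oplus R(1)$ at $\ell \mid n$) together with the use of absolute irreducibility to force $T^{G_K}=0$ and hence $\mathrm{H}^1(K,T[\mathfrak{m}^i]) \simeq \mathrm{H}^1(K,T)[\mathfrak{m}^i]$, is exactly the argument one finds in Howard's paper.

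One clarification regarding the obstacle you flag at the end: it is actually a non-issue, and you are making it harder than necessary. The Selmer structure on the submodule $T[\mathfrak{m}^i]$ is \emph{by definition} induced via the preimage along $\mathrm{H}^1(K_v,T[\mathfrak{m}^i]) \to \mathrm{H}^1(K_v,T)$ at every $v \in \Sigma_{\mathcal{F}(n)}$ (see the paragraph on induced Selmer structures in \S\ref{subsubsec:selmer-structure}). So the identity $\mathrm{H}^1_{\mathcal{F}(n)}(K_\ell,T[\mathfrak{m}^i]) = (\text{preimage of } \mathrm{H}^1_{\mathcal{F}(n)}(K_\ell,T))$ holds tautologically at $\ell \mid n$; there is nothing to trace through the summand decomposition. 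The only places requiring a genuine check are the unramified ones outside $\Sigma_{\mathcal{F}(n)}$, where the induced condition must be shown to agree with the intrinsic unramified condition, and this follows from the injectivity of $\mathrm{Hom}(I_v,T[\mathfrak{m}^i]) \hookrightarrow \mathrm{Hom}(I_v,T)$ when $T$ is unramified at $v$.
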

\begin{proof}
See \cite[Lemma 2.2.6]{howard-bipartite}.
\end{proof}
\begin{prop} \label{prop:selmer-group-structure-even-odd}
For any $n \in \mathcal{N}^{\mathrm{adm}}$, there is a non-canonical isomorphism
$$\mathrm{Sel}_{\mathcal{F}(n)}(K, T) \simeq R^{e(n)} \oplus M_n \oplus M_n$$
with $e(n) \in \lbrace 0, 1 \rbrace$.
\end{prop}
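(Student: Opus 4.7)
The plan is to reduce to the classification of finite modules over the principal Artinian local ring $R$ equipped with a perfect alternating pairing. The pairing will come from global Poitou--Tate duality applied to the self-dual modification $\mathcal{F}(n)$.

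First I would verify that $\mathcal{F}(n)$ is self-dual. The hypothesis gives self-duality of $\mathcal{F}$, and at each admissible prime $\ell \mid n$ the imposed local condition $\mathrm{H}^1_{\mathrm{ord}}(K_\ell, T)$ is maximal isotropic under the local Tate pairing by the preceding lemma, so self-duality is preserved. Next, combining the alternating coefficient pairing $T \times T \to R(1)$ with global Poitou--Tate duality and the self-duality of $\mathcal{F}(n)$, I would construct a natural pairing
\[
\langle \cdot, \cdot \rangle_n \colon \mathrm{Sel}_{\mathcal{F}(n)}(K, T) \times \mathrm{Sel}_{\mathcal{F}(n)}(K, T) \to R
\]
and verify that it is alternating. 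The alternating property comes from combining the alternating nature of the pairing on $T$ with the graded-commutativity of the cup product on $H^1$; since $p > 3$, $2 \in R^\times$, and $\langle \alpha, \alpha \rangle_n = 0$ for every $\alpha$.

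Let $R_n$ denote the radical of $\langle \cdot, \cdot \rangle_n$. Using the cartesian hypothesis to identify $\mathrm{Sel}_{\mathcal{F}(n)}(K, T)[\mathfrak{m}^i] \simeq \mathrm{Sel}_{\mathcal{F}(n)}(K, T/\mathfrak{m}^i T)$ (as in the lemma above), I would show that $R_n$ is a free $R$-module of rank $e(n) \in \{0,1\}$. This bound is the heart of the matter: cartesianness reduces the question to the residue field $R/\mathfrak{m}$, where a Wiles-type global Euler characteristic computation (exploiting absolute irreducibility of $T/\mathfrak{m}T$ and the fact that $K$ is imaginary quadratic, so that there is no archimedean local contribution for $p > 2$) pins the residual core rank of a self-dual Selmer structure at $0$ or $1$. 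Finally I would apply the classical classification: a finite $R$-module carrying a perfect alternating pairing, with $2$ invertible in $R$, is necessarily isomorphic to $M \oplus M$ for some $R$-module $M$, with the pairing hyperbolic. Applied to $\mathrm{Sel}_{\mathcal{F}(n)}(K, T)/R_n$ this produces the summand $M_n \oplus M_n$, and combined with the free complement $R_n \simeq R^{e(n)}$ this yields the claimed decomposition.

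The step I expect to be the main obstacle is controlling the radical---establishing the bound $e(n) \le 1$. The alternating property of $\langle \cdot, \cdot \rangle_n$ is essentially a sign-bookkeeping exercise, and the module classification is standard. Cutting the radical down to rank at most one, however, is where the global arithmetic of $K$, the self-duality of $\mathcal{F}(n)$, and the cartesian reduction to the residue field must all be assembled carefully; any looseness permits a larger free summand and breaks the symplectic structure on the complement.
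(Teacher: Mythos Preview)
The paper gives no argument of its own here---it simply cites Howard's \emph{Bipartite Euler systems}, which in turn rests on Mazur--Rubin's structure theory and Flach's generalized Cassels--Tate pairing. Your outline (self-duality of $\mathcal{F}(n)$, an alternating arithmetic pairing, hyperbolic decomposition of the nondegenerate quotient) is in the right circle of ideas, and you have correctly isolated the crux.

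The gap is sharper than you anticipate, however. The radical of any natural Poitou--Tate or Flach-type pairing on $\mathrm{Sel}_{\mathcal{F}(n)}(K,T)$ is \emph{not} in general free of rank at most one. Take $T=E[p^k]$ with $\mathrm{Sel}(K,E[p^\infty])\simeq(\mathbb{Q}_p/\mathbb{Z}_p)^3$ and trivial $\Sha$: then $\mathrm{Sel}(K,E[p^k])\simeq R^3$, every class is ``divisible'' (it lifts through all $E[p^{k+j}]$), and the Flach pairing vanishes identically---its radical is all of $R^3$. The proposition still holds, since $R^3=R\oplus(R\oplus R)$ with $e(n)=1$ and $M_n=R$, but your quotient-by-radical mechanism does not see this. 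Relatedly, $e(n)$ cannot be a Mazur--Rubin core rank: core rank is determined by the local dimensions of the Selmer structure via the Wiles formula, and those are constant as $n$ ranges over $\mathcal{N}^{\mathrm{adm}}$ (each $\mathrm{H}^1_{\mathrm{ord}}$ and $\mathrm{H}^1_{\mathrm{ur}}$ is free of the same rank one), whereas the paper's own Corollary shows $e(n)$ flips parity whenever a prime is adjoined to $n$. So ``Euler characteristic pins the core rank at $0$ or $1$'' conflates two different invariants.

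What the proposition actually asserts is only a parity constraint: every elementary divisor of $\mathrm{Sel}_{\mathcal{F}(n)}(K,T)$ strictly less than $\mathrm{length}(R)$ occurs with even multiplicity, and $e(n)$ records the parity of $\rho(n)=\dim\mathrm{Sel}_{\mathcal{F}(n)}(K,T/\mathfrak{m}T)$. In the cited references this is obtained not from a single pairing with small radical but from comparing the alternating Flach pairing across all truncations $T/\mathfrak{m}^iT$ via the cartesian hypothesis (equivalently, from the Mazur--Rubin core-vertex machinery). You should rework the argument around that level-by-level parity comparison rather than a rank bound on one global radical.
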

\begin{proof}
See \cite[Proposition 2.2.7]{howard-bipartite}.
\end{proof}
\begin{defn} \label{defn:stub-submodules}
Let $\mathcal{N}^{\mathrm{def}} \subseteq \mathcal{N}^{\mathrm{adm}}$ be the subset for which $e(n) = 0$ and
$\mathcal{N}^{\mathrm{ind}} \subseteq \mathcal{N}^{\mathrm{adm}}$ be the subset for which $e(n) = 1$.
For $n \in \mathcal{N}^{\mathrm{adm}}$, the stub submodule is defined by
$$\mathrm{Stub}_n  = \left \lbrace
    \begin{array}{ll}
     \mathfrak{m}^{\mathrm{length} (M_n)} \cdot R  & \textrm{ when } n \in \mathcal{N}^{\mathrm{def}} , \\ 
     \mathfrak{m}^{\mathrm{length} (M_n)} \cdot \mathrm{Sel}_{\mathcal{F}(n)}(K,T)  & \textrm{ when } n \in \mathcal{N}^{\mathrm{ind}} ,
    \end{array}
    \right. $$
with $M_n$ as in Proposition \ref{prop:selmer-group-structure-even-odd}. Note that $\mathrm{Stub}_n$ is a cyclic $R$-module for every $n \in \mathcal{N}^{\mathrm{adm}}$.
\end{defn}

\begin{prop} \label{prop:relation-lengths}
Let $n\ell \in \mathcal{N}^{\mathrm{adm}}$.
There exist non-negative integers $a$, $b$ with $a +b = \mathrm{length}(R)$ such that
the diagram of inclusions
\[
\xymatrix{
& \mathrm{Sel}_{\mathcal{F}^{\ell}(n)}(K, T) \\
\mathrm{Sel}_{\mathcal{F}(n)}(K, T) \ar@{^{(}->}[ur]^-{b} & & \mathrm{Sel}_{\mathcal{F}(n\ell)}(K, T) \ar@{_{(}->}[ul]_-{a} \\
& \mathrm{Sel}_{\mathcal{F}_{\ell}(n)}(K, T) \ar@{^{(}->}[ur]^-{b} \ar@{_{(}->}[ul]_-{a}
}
\]
where the labels on the arrows are the length of the respective quotients, i.e.
\begin{itemize}
\item $a = \mathrm{length} \left( \dfrac{ \mathrm{Sel}_{\mathcal{F}^{\ell}(n)}(K, T) }{ \mathrm{Sel}_{\mathcal{F}(n\ell)}(K, T) } \right) = \mathrm{length} \left( \dfrac{ \mathrm{Sel}_{\mathcal{F}(n)}(K, T) }{ \mathrm{Sel}_{\mathcal{F}_{\ell}(n)}(K, T) } \right)$.
\item $b = \mathrm{length} \left( \dfrac{ \mathrm{Sel}_{\mathcal{F}^{\ell}(n)}(K, T) }{ \mathrm{Sel}_{\mathcal{F}(n)}(K, T) } \right) = \mathrm{length} \left( \dfrac{ \mathrm{Sel}_{\mathcal{F}(n\ell)}(K, T) }{ \mathrm{Sel}_{\mathcal{F}_{\ell}(n)}(K, T) } \right)$.
\end{itemize}
All four quotients are cyclic $R$-modules and
\[
\xymatrix{
a = \mathrm{length} \left(
\mathrm{loc}_\ell \left( \mathrm{Sel}_{\mathcal{F}(n)}(K, T) \right)
 \right), &
b = \mathrm{length} \left(
\mathrm{loc}_\ell \left( \mathrm{Sel}_{\mathcal{F}(n\ell)}(K, T) \right)
 \right) .
}
\]
\end{prop}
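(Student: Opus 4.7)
The plan is to combine the Poitou--Tate global duality sequence for the dual pair of Selmer structures $(\mathcal{F}^\ell(n), \mathcal{F}_\ell(n))$ with the local Tate duality at $\ell$. I first observe that $\mathcal{F}(n)$ is itself self-dual: the underlying structure $\mathcal{F}$ is self-dual by hypothesis, and at each prime $q \mid n$ the local condition $\mathrm{H}^1_{\mathrm{ord}}(K_q, T)$ is maximal isotropic by the lemma preceding. Consequently the Cartier dual of the relaxed structure $\mathcal{F}^\ell(n)$ is exactly the strict structure $\mathcal{F}_\ell(n)$.

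I would then set $L := \mathrm{loc}_\ell(\mathrm{Sel}_{\mathcal{F}^\ell(n)}(K, T))$, $U := L \cap \mathrm{H}^1_{\mathrm{ur}}(K_\ell, T)$, $V := L \cap \mathrm{H}^1_{\mathrm{ord}}(K_\ell, T)$, $a := \mathrm{length}_R(U)$, and $b := \mathrm{length}_R(V)$. Since $\mathrm{H}^1_{\mathrm{ur}}(K_\ell, T)$ and $\mathrm{H}^1_{\mathrm{ord}}(K_\ell, T)$ are each free of rank one over $R$, both $U$ and $V$ are cyclic $R$-modules. The Poitou--Tate five-term exact sequence
\[
0 \to \mathrm{Sel}_{\mathcal{F}_\ell(n)}(K, T) \to \mathrm{Sel}_{\mathcal{F}^\ell(n)}(K, T) \xrightarrow{\mathrm{loc}_\ell} \mathrm{H}^1(K_\ell, T) \to \mathrm{Sel}_{\mathcal{F}^\ell(n)}(K, T)^\vee \to \mathrm{Sel}_{\mathcal{F}_\ell(n)}(K, T)^\vee \to 0
\]
then yields two key facts: (i) the kernel of $\mathrm{loc}_\ell$ on $\mathrm{Sel}_{\mathcal{F}^\ell(n)}(K, T)$ is $\mathrm{Sel}_{\mathcal{F}_\ell(n)}(K, T)$, giving the identifications $\mathrm{Sel}_{\mathcal{F}(n)}/\mathrm{Sel}_{\mathcal{F}_\ell(n)} \simeq U$ and $\mathrm{Sel}_{\mathcal{F}(n\ell)}/\mathrm{Sel}_{\mathcal{F}_\ell(n)} \simeq V$ of lengths $a$ and $b$ respectively; and (ii) $L = L^\perp$ under the perfect local Tate pairing, so $L$ is maximal isotropic with $\mathrm{length}_R(L) = \mathrm{length}_R(R)$.

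The hard part is to show $L = U \oplus V$, equivalently $a + b = \mathrm{length}_R(R)$; this would simultaneously yield the remaining identifications $\mathrm{Sel}_{\mathcal{F}^\ell(n)}/\mathrm{Sel}_{\mathcal{F}(n)} \simeq L/U \simeq V$ of length $b$ and $\mathrm{Sel}_{\mathcal{F}^\ell(n)}/\mathrm{Sel}_{\mathcal{F}(n\ell)} \simeq L/V \simeq U$ of length $a$, with cyclicity of all four quotients and the displayed identities $a = \mathrm{length}_R(\mathrm{loc}_\ell(\mathrm{Sel}_{\mathcal{F}(n)}))$, $b = \mathrm{length}_R(\mathrm{loc}_\ell(\mathrm{Sel}_{\mathcal{F}(n\ell)}))$ all becoming tautological. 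The maximal isotropy $L = L^\perp$ combined with the perfect duality $\mathrm{H}^1_{\mathrm{ur}}(K_\ell, T) \times \mathrm{H}^1_{\mathrm{ord}}(K_\ell, T) \to R$ forces the projection $\pi_{\mathrm{ord}}(L) \subseteq \mathrm{H}^1_{\mathrm{ord}}(K_\ell, T)$ to equal the annihilator $U^\perp$, which has length $\mathrm{length}_R(R) - a$; the inclusion $V \subseteq U^\perp$ is then immediate, and the main obstacle is its reverse. I plan to establish it by using the cartesian property of $\mathcal{F}(n)$ to descend to the residue field and then invoking the structure theorem (Proposition \ref{prop:selmer-group-structure-even-odd}) applied to both $\mathrm{Sel}_{\mathcal{F}(n)}(K, T)$ and $\mathrm{Sel}_{\mathcal{F}(n\ell)}(K, T)$, exploiting the parity switch $e(n) + e(n\ell) = 1$ to produce the free summand needed to realize an arbitrary class of $U^\perp$ as a local image.
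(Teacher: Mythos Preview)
Your global setup is exactly right and matches the standard argument (the paper itself simply cites \cite[Proposition 2.2.9]{howard-bipartite}): the self-duality of $\mathcal{F}(n)$, the Poitou--Tate sequence identifying $L=\mathrm{loc}_\ell\bigl(\mathrm{Sel}_{\mathcal{F}^\ell(n)}(K,T)\bigr)$ with its own annihilator, and the identifications $\mathrm{loc}_\ell(\mathrm{Sel}_{\mathcal{F}(n)})=U$, $\mathrm{loc}_\ell(\mathrm{Sel}_{\mathcal{F}(n\ell)})=V$ are all correct. You have also correctly isolated the one nontrivial point, namely $L=U\oplus V$.

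Where your plan goes astray is in the proposed resolution of that point. Invoking Proposition \ref{prop:selmer-group-structure-even-odd} together with the parity switch $e(n)+e(n\ell)=1$ is problematic: in the logical order of both this paper and \cite{howard-bipartite}, that parity statement is recorded \emph{after} Proposition \ref{prop:relation-lengths} as a corollary of it, so appealing to it here risks circularity. Even if one salvages the residue-field parity independently, your sketch of how this ``produces a free summand'' realizing classes of $U^\perp$ is too vague to constitute an argument.

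The point is in fact purely local and does not require any global structure theorem. Because the Weil pairing $T\times T\to R(1)$ is alternating, the induced local Tate pairing on $\mathrm{H}^1(K_\ell,T)$ is \emph{symmetric}. Hence every $x=(x_1,x_2)\in L$ (in the $\mathrm{ur}\oplus\mathrm{ord}$ coordinates) satisfies $0=\langle x,x\rangle=2x_1x_2$, so $x_1x_2=0$. Now choose $x\in L$ with $x_1$ generating $\pi_{\mathrm{ur}}(L)$. From your own identity $V=\pi_{\mathrm{ur}}(L)^\perp$ one has $\mathfrak{m}\text{-valuation}(x_1)=\mathrm{length}(R)-\mathrm{length}\,\pi_{\mathrm{ur}}(L)=\mathrm{length}(R)-(\mathrm{length}(R)-b)$, but more directly: $x_1x_2=0$ forces $x_2\in\mathrm{Ann}(x_1)$, and since $x_1$ generates $\pi_{\mathrm{ur}}(L)$ this annihilator is exactly $V$. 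Thus $(0,x_2)\in L$, whence $(x_1,0)\in L\cap\mathrm{H}^1_{\mathrm{ur}}=U$, i.e.\ $\pi_{\mathrm{ur}}(L)\subseteq U$. Combined with the trivial inclusion $U\subseteq\pi_{\mathrm{ur}}(L)$ this gives $U=\pi_{\mathrm{ur}}(L)$, hence $V=U^\perp$ and $a+b=\mathrm{length}(R)$. Replace your last paragraph with this two-line local computation and the proof is complete.
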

\begin{proof}
See \cite[Proposition 2.2.9]{howard-bipartite}.
\end{proof}
\begin{cor}
Fix $n \in \mathcal{N}^{\mathrm{adm}}$ and $e(n)$ be as in Proposition \ref{prop:selmer-group-structure-even-odd}.
The integer
$$\rho(n) = \mathrm{dim}_{R/\mathfrak{m}} \left(  \mathrm{Sel}_{\mathcal{F}(n)}(K, T/\mathfrak{m}T) \right)$$
satisfies
$e(n ) \equiv  \rho(n) \pmod{2}$, and for any $\ell \in \mathcal{P}^{\mathrm{adm}}$ primes to $n$,
\begin{align*}
\rho(n\ell) & = \rho(n) +1 \Leftrightarrow \mathrm{loc}_\ell \left( \mathrm{Sel}_{\mathcal{F}(n)}(K, T/\mathfrak{m}T) \right) = 0, \\
\rho(n\ell) & = \rho(n) -1 \Leftrightarrow \mathrm{loc}_\ell \left( \mathrm{Sel}_{\mathcal{F}(n)}(K, T/\mathfrak{m}T) \right) \neq 0.
\end{align*}
In particular, for $n\ell \in \mathcal{N}^{\mathrm{adm}}$, $n \in \mathcal{N}^{\mathrm{def}}$ if and only if $n\ell \in \mathcal{N}^{\mathrm{ind}}$.
\end{cor}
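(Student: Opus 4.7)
The plan is to derive the parity statement directly from the structure result in Proposition \ref{prop:selmer-group-structure-even-odd} and to read off the rank-change dichotomy from Proposition \ref{prop:relation-lengths} applied to the residual representation $T/\mathfrak{m}T$.

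First, combining the cartesian hypothesis of Assumption \ref{assu:abs-irred-cartesian} (in its ``$i=1$'' incarnation, which supplies an isomorphism $\mathrm{Sel}_{\mathcal{F}(n)}(K, T/\mathfrak{m}T) \simeq \mathrm{Sel}_{\mathcal{F}(n)}(K, T)[\mathfrak{m}]$) with the decomposition of Proposition \ref{prop:selmer-group-structure-even-odd}, I get
$$\mathrm{Sel}_{\mathcal{F}(n)}(K, T/\mathfrak{m}T) \simeq (R/\mathfrak{m})^{e(n)} \oplus M_n[\mathfrak{m}] \oplus M_n[\mathfrak{m}].$$
Taking $R/\mathfrak{m}$-dimensions yields
$$\rho(n) = e(n) + 2 \dim_{R/\mathfrak{m}} M_n[\mathfrak{m}] \equiv e(n) \pmod{2},$$
proving the parity claim.

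Next, I would apply Proposition \ref{prop:relation-lengths} to $T/\mathfrak{m}T$ over the residue ring $R/\mathfrak{m}$. Over the residue field the integers $a,b$ of that proposition satisfy $a + b = \mathrm{length}(R/\mathfrak{m}) = 1$, so $(a,b)$ is either $(0,1)$ or $(1,0)$. Chasing the diamond of inclusions over $R/\mathfrak{m}$ gives
$$\rho(n\ell) - \rho(n) = b - a \in \{+1,-1\},$$
and the identification $a = \dim_{R/\mathfrak{m}} \mathrm{loc}_\ell(\mathrm{Sel}_{\mathcal{F}(n)}(K, T/\mathfrak{m}T))$ from Proposition \ref{prop:relation-lengths} separates the two cases precisely as stated: $(a,b)=(0,1)$ corresponds to the localization being zero and $\rho$ increasing by one, while $(a,b)=(1,0)$ corresponds to the localization being nonzero and $\rho$ decreasing by one.

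Finally, since $\rho(n)$ and $\rho(n\ell)$ differ by $1$ and so have opposite parities, the parity statement already proved forces $e(n) \ne e(n\ell)$ in $\{0,1\}$; hence exactly one of $n, n\ell$ lies in $\mathcal{N}^{\mathrm{def}}$ and the other in $\mathcal{N}^{\mathrm{ind}}$. The only step that requires genuine care is verifying that Proposition \ref{prop:relation-lengths} does apply verbatim over $R/\mathfrak{m}$; this reduces to checking that the cartesian property and the admissible-prime splitting $T \simeq R \oplus R(1)$ descend to the residue field, which is automatic from the definitions and from $\ell$ being admissible.
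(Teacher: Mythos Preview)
Your argument is correct and is precisely the standard one: the paper itself gives no proof here but simply cites \cite[Corollary 2.2.10 and Remark 2.2.11]{howard-bipartite}, and what you have written is essentially a faithful reconstruction of Howard's argument, using the cartesian isomorphism $\mathrm{Sel}_{\mathcal{F}(n)}(K,T/\mathfrak{m}T)\simeq \mathrm{Sel}_{\mathcal{F}(n)}(K,T)[\mathfrak{m}]$ together with Proposition~\ref{prop:selmer-group-structure-even-odd} for the parity, and Proposition~\ref{prop:relation-lengths} over the residue field for the $\pm 1$ dichotomy. Your closing caveat is also well placed: the passage to $R/\mathfrak{m}$ is harmless because a field is trivially principal Artinian local, the cartesian condition is vacuous there, and admissibility of $\ell$ is visibly preserved modulo $\mathfrak{m}$.
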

\begin{proof}
See \cite[Corollary 2.2.10 and Remark 2.2.11]{howard-bipartite}.
\end{proof}
\begin{cor}
Suppose $n \ell \in \mathcal{N}^{\mathrm{adm}}$, and let $a$ and $b$ be as in Proposition \ref{prop:relation-lengths}.
Then
$$\mathrm{length}(M_n)   = \left \lbrace
    \begin{array}{ll}
    \mathrm{length}(M_{n\ell})  +a & \textrm{ when } n \in \mathcal{N}^{\mathrm{def}} , \\ 
     \mathrm{length}(M_{n\ell})  - b  & \textrm{ when } n \in \mathcal{N}^{\mathrm{ind}} .
    \end{array}
    \right.$$
\end{cor}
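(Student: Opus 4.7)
The plan is to extract the claim by length counting, using the structural description of $\mathrm{Sel}_{\mathcal{F}(m)}(K,T)$ from Proposition \ref{prop:selmer-group-structure-even-odd}, the diamond of Proposition \ref{prop:relation-lengths}, and the parity swap $n \leftrightarrow n\ell$ furnished by the immediately preceding corollary.

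First I would record the identity
$$\mathrm{length} \bigl( \mathrm{Sel}_{\mathcal{F}(m)}(K,T) \bigr) = e(m)\cdot\mathrm{length}(R) + 2 \cdot \mathrm{length}(M_m)$$
for $m \in \lbrace n, n\ell \rbrace$, obtained from Proposition \ref{prop:selmer-group-structure-even-odd}. The preceding corollary forces $e(n)$ and $e(n\ell)$ to have opposite parity (passing from $n$ to $n\ell$ changes $\rho$ by $\pm 1$), so exactly one of $n$, $n\ell$ lies in $\mathcal{N}^{\mathrm{def}}$ and the other in $\mathcal{N}^{\mathrm{ind}}$, matching the hypothesis of the statement.

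Next I would read off from the diamond in Proposition \ref{prop:relation-lengths} the equality
$$\mathrm{length}\bigl(\mathrm{Sel}_{\mathcal{F}(n\ell)}(K,T)\bigr) - \mathrm{length}\bigl(\mathrm{Sel}_{\mathcal{F}(n)}(K,T)\bigr) = b - a,$$
obtained by going up from $\mathrm{Sel}_{\mathcal{F}(n)}(K,T)$ to $\mathrm{Sel}_{\mathcal{F}^\ell(n)}(K,T)$ (gaining length $b$) and then down to $\mathrm{Sel}_{\mathcal{F}(n\ell)}(K,T)$ (losing length $a$). Substituting the structural identity and invoking the relation $a + b = \mathrm{length}(R)$ from Proposition \ref{prop:relation-lengths} yields
$$2 \bigl(\mathrm{length}(M_{n\ell}) - \mathrm{length}(M_n)\bigr) = (b - a) - \bigl(e(n\ell) - e(n)\bigr)(a+b).$$

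Finally I would split according to which of $n$, $n\ell$ is definite. When $n \in \mathcal{N}^{\mathrm{def}}$, so $(e(n), e(n\ell)) = (0,1)$, the right-hand side simplifies to $(b-a) - (a+b) = -2a$, giving $\mathrm{length}(M_n) = \mathrm{length}(M_{n\ell}) + a$; when $n \in \mathcal{N}^{\mathrm{ind}}$, so $(e(n), e(n\ell)) = (1,0)$, it simplifies to $(b-a) + (a+b) = 2b$, giving $\mathrm{length}(M_n) = \mathrm{length}(M_{n\ell}) - b$. The whole argument is essentially bookkeeping on top of the two cited propositions and the parity corollary; no step looks genuinely delicate, so I do not anticipate a substantive obstacle.
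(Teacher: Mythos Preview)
Your argument is correct: the length bookkeeping from Proposition \ref{prop:selmer-group-structure-even-odd}, the diamond in Proposition \ref{prop:relation-lengths}, and the parity flip from the preceding corollary combine exactly as you describe, and the case split yields the claimed identities. The paper itself does not give an argument here but simply cites \cite[Corollary 2.2.12]{howard-bipartite}; your write-up is the natural length-counting proof that reference contains.
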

\begin{proof}
See \cite[Corollary 2.2.12]{howard-bipartite}.
\end{proof}
\begin{cor}
Suppose $n\ell \in \mathcal{N}^{\mathrm{adm}}$. There exists an isomorphism of $R$-modules
$$
    \begin{array}{ll}
    \mathrm{loc}_\ell(\mathrm{Stub}_n) \simeq \mathrm{Stub}_{n\ell} & \textrm{ if } n \in \mathcal{N}^{\mathrm{ind}} , \\ 
     \mathrm{loc}_\ell(\mathrm{Stub}_{n\ell}) \simeq \mathrm{Stub}_{n}  & \textrm{ if } n \in \mathcal{N}^{\mathrm{def}} .
    \end{array}
$$
\end{cor}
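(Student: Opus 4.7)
The plan is to reduce the claim to a length comparison of cyclic $R$-modules. Both $\mathrm{Stub}_n$ and $\mathrm{Stub}_{n\ell}$ are cyclic by construction, and images of cyclic modules under $\mathrm{loc}_\ell$ remain cyclic, so it suffices to match lengths. Set $L = \mathrm{length}(R)$, $m = \mathrm{length}(M_n)$, $m' = \mathrm{length}(M_{n\ell})$, fix a generator $\pi \in \mathfrak{m}$, and let $a, b$ be as in Proposition \ref{prop:relation-lengths} so that $a + b = L$; by the preceding corollary, $m' = m + b$ when $n \in \mathcal{N}^{\mathrm{ind}}$ and $m = m' + a$ when $n \in \mathcal{N}^{\mathrm{def}}$. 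I treat the case $n \in \mathcal{N}^{\mathrm{ind}}$ in detail; the other case is symmetric, swapping the roles of $a$ and $b$ and of the decompositions below.

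Using Proposition \ref{prop:selmer-group-structure-even-odd}, fix a decomposition $\mathrm{Sel}_{\mathcal{F}(n)}(K, T) \simeq R \cdot x \oplus M_n \oplus M_n$. Every cyclic summand of $M_n$ has length at most $m$, so $\mathfrak{m}^m$ annihilates $M_n$, and hence
$$\mathrm{Stub}_n = \mathfrak{m}^m \cdot \mathrm{Sel}_{\mathcal{F}(n)}(K, T) = R \cdot \pi^m x$$
is cyclic of length $\max(L - m, 0)$. On the other hand, $\mathrm{Stub}_{n\ell} = \mathfrak{m}^{m'} R$ is cyclic of length $\max(L - m', 0) = \max(a - m, 0)$.

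The main step is to show that $\mathrm{loc}_\ell(\mathrm{Stub}_n) = R \cdot \pi^m \mathrm{loc}_\ell(x)$ also has length $\max(a - m, 0)$. The target $\mathrm{H}^1_{\mathrm{ur}}(K_\ell, T)$ is isomorphic to $R$, and $\mathrm{loc}_\ell(\mathrm{Sel}_{\mathcal{F}(n)}(K, T))$ is cyclic of length $a$ by Proposition \ref{prop:relation-lengths}. If $a \leq m$, then $R \cdot \mathrm{loc}_\ell(x)$ has length at most $a \leq m$, so $\mathrm{loc}_\ell(x)$ is killed by $\pi^m$, the image is zero, and this matches $\max(a - m, 0) = 0$. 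If $a > m$, write
$$\mathrm{loc}_\ell(\mathrm{Sel}_{\mathcal{F}(n)}(K, T)) = R \cdot \mathrm{loc}_\ell(x) + \mathrm{loc}_\ell(M_n \oplus M_n);$$
both summands are cyclic submodules of a cyclic module of length $a$, and the second is annihilated by $\mathfrak{m}^m$ and thus has length at most $m < a$. Because the submodule lattice of a cyclic $R$-module is totally ordered, the second summand must be contained in $R \cdot \mathrm{loc}_\ell(x)$, so $\mathrm{loc}_\ell(x)$ has length exactly $a$ and $\pi^m \mathrm{loc}_\ell(x)$ has length $a - m$. Either way the lengths coincide, yielding the desired isomorphism of cyclic $R$-modules.

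The main obstacle is precisely this domination step when $a > m$: one must recognize that the free generator $x$ of the Selmer group is what carries the divisibility on which the stub is built, and that the $M_n$-torsion cannot absorb enough of it to shrink the localization below the prescribed length. The symmetric case $n \in \mathcal{N}^{\mathrm{def}}$ is handled verbatim using the decomposition $\mathrm{Sel}_{\mathcal{F}(n\ell)}(K, T) \simeq R \cdot y \oplus M_{n\ell} \oplus M_{n\ell}$, with $\mathrm{loc}_\ell$ landing in $\mathrm{H}^1_{\mathrm{ord}}(K_\ell, T) \simeq R$ and $b$ replacing $a$ throughout.
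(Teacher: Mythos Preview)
Your argument is correct. The paper itself does not give a proof here but simply cites \cite[Corollary 2.2.13]{howard-bipartite}, so there is no ``paper's approach'' to compare against beyond noting that Howard's original argument proceeds along the same lines: reduce to a length computation of cyclic $R$-modules using the length relations of the preceding proposition and corollary. Your key step---showing that when $a>m$ the free generator $x$ must map to a generator of the full image of length $a$, because the totally ordered submodule lattice of a cyclic $R$-module forces the $M_n$-contribution (of length $\leq m < a$) to be absorbed into $R\cdot\mathrm{loc}_\ell(x)$---is exactly the point, and you have handled it cleanly. The symmetric case is indeed verbatim with $b$ in place of $a$.
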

\begin{proof}
See \cite[Corollary 2.2.13]{howard-bipartite}.
\end{proof}
\subsubsection{Bipartite Euler systems}
We keep Assumption \ref{assu:abs-irred-cartesian} as well as the following one.
\begin{assu} \label{assu:chebotarev-bipartite}
For any non-zero $c \in \mathrm{H}^1(K, T/\mathfrak{m}T)$, there are infinitely many primes $\ell \in \mathcal{P}^{\mathrm{adm}}$
such that $\mathrm{loc}_\ell (c) \neq 0$.
\end{assu}
\begin{defn} \label{defn:bipartite-euler-systems}
A \textbf{bipartite Euler system for $(T, \mathcal{F}, \mathcal{P}^{\mathrm{adm}})$} is a pair of families
\[
\xymatrix{
\ks^{\mathrm{bip}} = \left\lbrace \kappa^{\mathrm{bip}}_n \in \mathrm{Sel}_{\mathcal{F}(n)}(K,T) : n \in \mathcal{N}^{\mathrm{ind}} \right\rbrace , & 
\lambdab^{\mathrm{bip}} = \left\lbrace \lambda^{\mathrm{bip}}_n \in R : n \in \mathcal{N}^{\mathrm{def}}  \right\rbrace
}
\]
related by the first and second explicit reciprocity laws:
\begin{enumerate}
\item For $n\ell \in \mathcal{N}^{\mathrm{ind}}$, there exists an isomorphism of $R$-modules
$$R / \lambda^{\mathrm{bip}}_n R \simeq \mathrm{H}^1_{\mathrm{ord}}(K_\ell, T) / R \cdot \mathrm{loc}_\ell(\kappa^{\mathrm{bip}}_{n\ell} ) .$$ 
\item For $n\ell \in \mathcal{N}^{\mathrm{def}}$, there exists an isomorphism of $R$-modules
$$R / \lambda^{\mathrm{bip}}_{n\ell} R \simeq \mathrm{H}^1_{\mathrm{ur}}(K_\ell, T) / R \cdot \mathrm{loc}_\ell(\kappa^{\mathrm{bip}}_{n} ) .$$ 
\end{enumerate}
\end{defn}
\begin{rem}
A bipartite Euler system is \textbf{non-trivial} if $\lambda^{\mathrm{bip}}_n \neq 0$ for some $n \in \mathcal{N}^{\mathrm{def}}$. 
Indeed, this definition of bipartite Euler systems is ``of odd type" following the original definition. Since any even type bipartite Euler system is always trivial \cite[Proposition 2.3.4]{howard-bipartite}, we ignore the odd-even type classification.

\end{rem}

\begin{lem} \label{lem:chebotarev-bipartite}
For any $n \in \mathcal{N}^{\mathrm{adm}}$ and any cyclic $R$-submodule $C \subseteq \mathrm{Sel}_{\mathcal{F}(n)}(K, T)$, there exist infinitely many $\ell \in \mathcal{P}^{\mathrm{adm}}$ such that the map
$\mathrm{loc}_\ell : C \to \mathrm{H}^1_{\mathrm{ur}}(K, T)$ is injective.
If $C$ is free of rank one over $R$, then for any such $\ell$, we have isomorphism $\mathrm{loc}_\ell : C \simeq \mathrm{H}^1_{\mathrm{ur}}(K, T)$.
\end{lem}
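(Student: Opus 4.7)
The plan is to reduce the injectivity of $\mathrm{loc}_\ell$ on $C$ to the non-vanishing of a single residual class and then invoke Assumption \ref{assu:chebotarev-bipartite}. Since $C$ is cyclic over $R$, I write $C = Rc$ with $C \simeq R/\mathfrak{m}^aR$ for some $1 \leq a \leq \mathrm{length}(R)$. Fix a generator $\pi$ of $\mathfrak{m}$. The socle $\mathrm{soc}(C) = R \cdot \pi^{a-1}c$ is a one-dimensional $R/\mathfrak{m}$-vector space, and it is contained in every nonzero $R$-submodule of $C$. Consequently any $R$-linear map out of $C$ is injective if and only if its restriction to $\mathrm{soc}(C)$ is nonzero, and the task reduces to producing infinitely many $\ell \in \mathcal{P}^{\mathrm{adm}}$ for which $\mathrm{loc}_\ell(\pi^{a-1}c) \neq 0$ in $\mathrm{H}^1(K_\ell, T)$.

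Next I translate this $\mathfrak{m}$-torsion statement to the residual level. The cartesian property of $\mathcal{F}(n)$ recalled above, specialized to $i = 1$, gives the isomorphism $\mathrm{Sel}_{\mathcal{F}(n)}(K, T/\mathfrak{m}T) \simeq \mathrm{Sel}_{\mathcal{F}(n)}(K, T)[\mathfrak{m}]$ induced by the Galois-equivariant map $T/\mathfrak{m}T \xrightarrow{\pi^{\mathrm{length}(R)-1}} T[\mathfrak{m}] \hookrightarrow T$. Under this isomorphism the nonzero class $\pi^{a-1}c$ corresponds to a nonzero class $\bar{c}_0 \in \mathrm{Sel}_{\mathcal{F}(n)}(K, T/\mathfrak{m}T) \subseteq \mathrm{H}^1(K, T/\mathfrak{m}T)$. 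By Assumption \ref{assu:chebotarev-bipartite} there exist infinitely many $\ell \in \mathcal{P}^{\mathrm{adm}}$ with $\mathrm{loc}_\ell(\bar{c}_0) \neq 0$. Because the identification above is built from Galois-equivariant maps of coefficients, it is functorial with respect to restriction to $G_{K_\ell}$, so the nonvanishing of $\mathrm{loc}_\ell(\bar{c}_0)$ forces $\mathrm{loc}_\ell(\pi^{a-1}c) \neq 0$. Since $C \subseteq \mathrm{Sel}_{\mathcal{F}(n)}(K, T)$ and $\ell \notin \Sigma_{\mathcal{F}(n)}$, this localization automatically lands in $\mathrm{H}^1_{\mathrm{ur}}(K_\ell, T)$.

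For the second assertion, suppose $C$ is free of rank one over $R$, so $\mathrm{length}_R(C) = \mathrm{length}(R)$. By the decomposition lemma for admissible primes recalled above, $\mathrm{H}^1_{\mathrm{ur}}(K_\ell, T)$ is also free of rank one over $R$ and hence has the same length as $C$. An injection between two $R$-modules of equal finite length is automatically surjective, yielding $\mathrm{loc}_\ell : C \xrightarrow{\sim} \mathrm{H}^1_{\mathrm{ur}}(K_\ell, T)$.

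The step that I expect to require the most care is the compatibility check in the second paragraph, namely verifying that the cartesian isomorphism intertwines $\mathrm{loc}_\ell$ between the integral and residual Selmer groups. This is ultimately a formal consequence of functoriality of Galois cohomology applied to the map $T/\mathfrak{m}T \xrightarrow{\pi^{\mathrm{length}(R)-1}} T[\mathfrak{m}] \hookrightarrow T$, combined with the vanishing of the relevant $\mathrm{H}^0$ terms from absolute irreducibility of $T/\mathfrak{m}T$, but it is the bookkeeping point where one must be precise so that nonvanishing of the localized residual class genuinely propagates back to nonvanishing of $\mathrm{loc}_\ell(\pi^{a-1}c)$.
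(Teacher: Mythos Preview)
Your argument is correct. The paper itself does not supply a proof here; it simply cites \cite[Lemma 2.3.3]{howard-bipartite} and notes that it rests on Assumption \ref{assu:chebotarev-bipartite}. Your reduction to the socle of $C$, passage to the residual representation, and appeal to Assumption \ref{assu:chebotarev-bipartite} is exactly the standard route and is what Howard's proof does as well.

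One small simplification: you do not actually need the cartesian property of $\mathcal{F}(n)$ for this lemma. The identification $\mathrm{H}^1(K,T)[\mathfrak{m}] \simeq \mathrm{H}^1(K,T/\mathfrak{m}T)$ at the level of \emph{global} cohomology already follows from the absolute irreducibility of $T/\mathfrak{m}T$ (which forces $\mathrm{H}^0(K,T/\mathfrak{m}^iT)=0$ for all $i$), and Assumption \ref{assu:chebotarev-bipartite} is stated for arbitrary nonzero classes in $\mathrm{H}^1(K,T/\mathfrak{m}T)$, not merely Selmer classes. So the bookkeeping you flagged as the delicate point is in fact lighter than you feared: functoriality of restriction with respect to the coefficient map $T/\mathfrak{m}T \xrightarrow{\pi^{\mathrm{length}(R)-1}} T$ is all that is required, and no local-condition compatibility enters.
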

\begin{proof}
See \cite[Lemma 2.3.3]{howard-bipartite}. This is based on Assumption \ref{assu:chebotarev-bipartite}.
\end{proof}

\begin{prop}
Fix a bipartite Euler system $(\lambdab^{\mathrm{bip}}, \ks^{\mathrm{bip}})$ for $(T, \mathcal{F}, \mathcal{P}^{\mathrm{adm}})$.
Let $k = \mathrm{length}(R)$ and $M_n$ be as in Proposition \ref{prop:selmer-group-structure-even-odd}.
\begin{enumerate}
\item If $\lambda^{\mathrm{bip}}_n \neq 0$ for some $n \in \mathcal{N}^{\mathrm{def}}$, then $\mathfrak{m}^{k-1} \cdot M_n = 0$.
\item If $\kappa^{\mathrm{bip}}_n \neq 0$ for some $n \in \mathcal{N}^{\mathrm{ind}}$, then $\mathfrak{m}^{k-1} \cdot M_n = 0$.
\end{enumerate}
\end{prop}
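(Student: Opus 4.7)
The plan is a proof by contradiction for each part, using Lemma \ref{lem:chebotarev-bipartite} to manufacture an auxiliary admissible prime $\ell$ and then invoking the explicit reciprocity laws of Definition \ref{defn:bipartite-euler-systems}. The common observation is that if $\mathfrak{m}^{k-1} M_n \neq 0$, then $M_n$ contains an element of order $\pi^k$; embedding one copy of $M_n$ into $\mathrm{Sel}_{\mathcal{F}(n)}(K, T)$ via the decomposition of Proposition \ref{prop:selmer-group-structure-even-odd} supplies a free cyclic submodule $Rx \subseteq \mathrm{Sel}_{\mathcal{F}(n)}(K, T)$. Applying Lemma \ref{lem:chebotarev-bipartite} to $Rx$ yields $\ell \in \mathcal{P}^{\mathrm{adm}}$ coprime to $n$ for which $\mathrm{loc}_\ell \colon Rx \xrightarrow{\sim} \mathrm{H}^1_{\mathrm{ur}}(K_\ell, T) \simeq R$; in the notation of Proposition \ref{prop:relation-lengths} this forces $a = k$ and $b = 0$, so $\mathrm{Sel}_{\mathcal{F}(n\ell)}(K, T) = \mathrm{Sel}_{\mathcal{F}_\ell(n)}(K, T) = \ker \mathrm{loc}_\ell$ and the surjection $\mathrm{loc}_\ell \colon \mathrm{Sel}_{\mathcal{F}(n)}(K, T) \twoheadrightarrow R$ splits through $x$.

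For part (1), $n \in \mathcal{N}^{\mathrm{def}}$: since $\kappa^{\mathrm{bip}}_{n\ell} \in \mathrm{Sel}_{\mathcal{F}(n\ell)}(K, T) = \ker \mathrm{loc}_\ell$, the first explicit reciprocity law collapses to $R / \lambda^{\mathrm{bip}}_n R \simeq \mathrm{H}^1_{\mathrm{ord}}(K_\ell, T) \simeq R$, forcing $\lambda^{\mathrm{bip}}_n = 0$ and contradicting the hypothesis. For part (2), $n \in \mathcal{N}^{\mathrm{ind}}$ with $\kappa^{\mathrm{bip}}_n \neq 0$: write $\kappa^{\mathrm{bip}}_n = \pi^j y$ with $y \notin \mathfrak{m} \cdot \mathrm{Sel}_{\mathcal{F}(n)}(K, T)$ and $j < k$, and a two-element refinement of Chebotarev produces a single $\ell$ realizing both $\mathrm{loc}_\ell \overline{x} \neq 0$ and $\mathrm{loc}_\ell \overline{y} \neq 0$ in $\mathrm{H}^1_{\mathrm{ur}}(K_\ell, T/\mathfrak{m} T) \simeq R/\mathfrak{m}$; equivalently, $\mathrm{loc}_\ell x$ is a unit of $R$ and $\mathrm{loc}_\ell \kappa^{\mathrm{bip}}_n = \pi^j \cdot (\text{unit}) \neq 0$. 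A direct computation with the splitting above and $e(n) = 1$ gives $M_{n\ell} \simeq R \oplus M_n'$ for $M_n = Rx \oplus M_n'$, so $M_{n\ell}$ has a free $R$-summand; applying part (1) to $n\ell \in \mathcal{N}^{\mathrm{def}}$ yields $\lambda^{\mathrm{bip}}_{n\ell} = 0$. The second explicit reciprocity law now reads $R \simeq R / \lambda^{\mathrm{bip}}_{n\ell} R \simeq \mathrm{H}^1_{\mathrm{ur}}(K_\ell, T)/R \cdot \mathrm{loc}_\ell \kappa^{\mathrm{bip}}_n$, which forces $\mathrm{loc}_\ell \kappa^{\mathrm{bip}}_n = 0$, a contradiction.

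The main obstacle will be the enhanced Chebotarev step in (2), since Lemma \ref{lem:chebotarev-bipartite} as stated handles only a single cyclic submodule. In the degenerate case where $\overline{x}$ and $\overline{y}$ are proportional in $\mathrm{Sel}_{\mathcal{F}(n)}(K, T/\mathfrak{m} T)$ a single invocation of Lemma \ref{lem:chebotarev-bipartite} suffices; in general one applies Chebotarev on the compositum of the Galois extensions of $K$ cut out by the two residual cocycles and selects a Frobenius class realizing both local conditions, exploiting $|R/\mathfrak{m}| \geq p \geq 5$ so that the loci on which either residual localization vanishes cannot together cover $\mathcal{P}^{\mathrm{adm}}$. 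This refinement should be isolated as a lemma (a standard strengthening of Assumption \ref{assu:chebotarev-bipartite}) prior to the proof of the proposition.
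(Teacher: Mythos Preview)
Your argument for part (1) is correct and is essentially the standard one. The paper itself simply cites \cite[Proposition 2.3.5]{howard-bipartite}, so the comparison is with Howard's proof.

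For part (2), the two-class Chebotarev step you flag is a genuine obstacle in the axiomatic framework: Assumption \ref{assu:chebotarev-bipartite} only asserts that for a \emph{single} nonzero residual class $c$ there are infinitely many $\ell \in \mathcal{P}^{\mathrm{adm}}$ with $\mathrm{loc}_\ell(c)\neq 0$, and nothing in the axioms prevents the good sets for $\overline{x}$ and $\overline{y}$ from being disjoint. Your proposed density fix via $\lvert R/\mathfrak{m}\rvert \geq 5$ appeals to the concrete Chebotarev picture rather than to the hypotheses actually available, so as written it is a gap (or, at best, an unmotivated strengthening of the assumptions).

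The cleaner route, which is what Howard does and which stays within the given axioms, avoids the simultaneous condition entirely. Apply Lemma \ref{lem:chebotarev-bipartite} once, to the cyclic module $R\kappa^{\mathrm{bip}}_n$, obtaining $\ell$ with $\mathrm{loc}_\ell(\kappa^{\mathrm{bip}}_n)\neq 0$. The second reciprocity law then gives $\lambda^{\mathrm{bip}}_{n\ell}\neq 0$, and part (1) yields $\mathfrak{m}^{k-1}M_{n\ell}=0$, hence $\mathfrak{m}^{k-1}\mathrm{Sel}_{\mathcal{F}(n\ell)}(K,T)=0$ since $n\ell\in\mathcal{N}^{\mathrm{def}}$. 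In particular $\mathrm{Sel}_{\mathcal{F}_\ell(n)}(K,T)\subseteq \mathrm{Sel}_{\mathcal{F}(n\ell)}(K,T)$ is annihilated by $\mathfrak{m}^{k-1}$, while $\mathrm{Sel}_{\mathcal{F}(n)}(K,T)/\mathrm{Sel}_{\mathcal{F}_\ell(n)}(K,T)$ is cyclic by Proposition \ref{prop:relation-lengths}. Now use the elementary fact that if a finitely generated $R$-module $A$ has a submodule $B$ with $\mathfrak{m}^{k-1}B=0$ and $A/B$ cyclic, then $A$ has at most one free $R$-summand: indeed $B\subseteq A[\mathfrak{m}^{k-1}]$, and writing $A\simeq R^r\oplus N$ with $\mathfrak{m}^{k-1}N=0$ one has $A[\mathfrak{m}^{k-1}]=\mathfrak{m}R^r\oplus N$, so $(R/\mathfrak{m})^r\simeq A/A[\mathfrak{m}^{k-1}]$ is a quotient of the cyclic module $A/B$, forcing $r\leq 1$. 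Since $\mathrm{Sel}_{\mathcal{F}(n)}(K,T)\simeq R\oplus M_n\oplus M_n$ already exhibits one free summand, $M_n$ can have none, i.e.\ $\mathfrak{m}^{k-1}M_n=0$. This bypasses both the enhanced Chebotarev lemma and your explicit computation of $M_{n\ell}$.
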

\begin{proof}
See \cite[Proposition 2.3.5]{howard-bipartite}.
\end{proof}
\begin{defn} \label{defn:freeness-bipartite}
We say that  a bipartite Euler system $(\lambdab^{\mathrm{bip}}, \ks^{\mathrm{bip}})$ for $(T, \mathcal{F}, \mathcal{P}^{\mathrm{adm}})$
is \textbf{free} if for every $n \in \mathcal{N}^{\mathrm{ind}}$, there exists a free $R$-submodule 
$C_n \subseteq \mathrm{Sel}_{\mathcal{F}(n)}(K, T)$
 of rank one containing $\kappa^{\mathrm{bip}}_n$.
\end{defn}
\begin{thm} \label{thm:stub-submodulues-bipartite}
For any free bipartite Euler system $(\lambdab^{\mathrm{bip}}, \ks^{\mathrm{bip}})$ for $(T, \mathcal{F}, \mathcal{P}^{\mathrm{adm}})$, we have
\begin{itemize}
\item $\lambda^{\mathrm{bip}}_n \in \mathrm{Stub}_n$ for every $n \in \mathcal{N}^{\mathrm{def}}$, and
\item $\kappa^{\mathrm{bip}}_n \in \mathrm{Stub}_n$ for every $n \in \mathcal{N}^{\mathrm{ind}}$.
\end{itemize}
Equivalently, the $R$-module $M_n$ of Proposition \ref{prop:selmer-group-structure-even-odd} satisfies
$$\mathrm{length}(M_n)   \leq \left \lbrace
    \begin{array}{ll}
    \mathrm{length}(M_{n\ell})  +a & \textrm{ when } n \in \mathcal{N}^{\mathrm{def}} , \\ 
     \mathrm{length}(M_{n\ell})  - b  & \textrm{ when } n \in \mathcal{N}^{\mathrm{ind}} .
    \end{array}
    \right.$$
\end{thm}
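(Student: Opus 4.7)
The plan is to prove the containment $\lambda^{\mathrm{bip}}_n \in \mathrm{Stub}_n$ (resp. $\kappa^{\mathrm{bip}}_n \in \mathrm{Stub}_n$) by an induction that plays the two reciprocity laws off each other via the Chebotarev-type Lemma~\ref{lem:chebotarev-bipartite}. The first step is to translate the stub containment into an index inequality so that the two reciprocity laws can be applied uniformly. Since $\mathrm{Stub}_n$ is a cyclic $R$-module of length $\mathrm{length}(R) - \mathrm{length}(M_n)$, and since the freeness hypothesis provides, for $n \in \mathcal{N}^{\mathrm{ind}}$, a free rank-one submodule $C_n \subseteq \mathrm{Sel}_{\mathcal{F}(n)}(K, T)$ with $\kappa^{\mathrm{bip}}_n \in C_n$, the assertions become
\[
\mathrm{ind}(\lambda^{\mathrm{bip}}_n, R) \geq \mathrm{length}(M_n) \ (n \in \mathcal{N}^{\mathrm{def}}), \qquad \mathrm{ind}(\kappa^{\mathrm{bip}}_n, C_n) \geq \mathrm{length}(M_n) \ (n \in \mathcal{N}^{\mathrm{ind}}).
\]

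Next, I would proceed by contradiction. Let $d(n) = \mathrm{length}(M_n) - \mathrm{ind}(n)$, where $\mathrm{ind}(n)$ denotes whichever of the two indices above is relevant; by the non-triviality hypotheses in play and by Proposition~4.20, $d(n)$ is a bounded non-negative integer, so we may choose $n$ with $d(n) > 0$ extremal in a suitable sense. The crux is then to use Lemma~\ref{lem:chebotarev-bipartite} to select an admissible prime $\ell$ with a prescribed localization behaviour and to transfer the failure of the inequality to the level $n\ell$ via the appropriate reciprocity law. For instance, if $n \in \mathcal{N}^{\mathrm{ind}}$, Lemma~\ref{lem:chebotarev-bipartite} yields $\ell$ such that $\mathrm{loc}_\ell : C_n \xrightarrow{\sim} \mathrm{H}^1_{\mathrm{ur}}(K_\ell, T)$. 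The second reciprocity law then produces
\[
R / \lambda^{\mathrm{bip}}_{n\ell} R \simeq \mathrm{H}^1_{\mathrm{ur}}(K_\ell, T) / R \cdot \mathrm{loc}_\ell(\kappa^{\mathrm{bip}}_n),
\]
which forces $\mathrm{ind}(\lambda^{\mathrm{bip}}_{n\ell}, R) = \mathrm{ind}(\kappa^{\mathrm{bip}}_n, C_n)$; combining with the length-change formulas in Proposition~\ref{prop:relation-lengths} and its corollaries (which pin down $\mathrm{length}(M_{n\ell}) - \mathrm{length}(M_n)$ in terms of the $a, b$ invariants of the pair $(n, \ell)$), one obtains the comparison $d(n\ell) \geq d(n)$, with further Chebotarev flexibility available to push the inequality strictly. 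The definite case is symmetric, using the first reciprocity law applied after choosing $\ell$ so that $\mathrm{loc}_\ell$ is suitably controlled on a free rank-one submodule of $\mathrm{Sel}_{\mathcal{F}(n\ell)}(K, T)$.

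The main obstacle will be arranging the induction to terminate: a single reciprocity-law step does not obviously decrease the discrepancy $d$, because the length $\mathrm{length}(M_n)$ itself moves along with $\mathrm{ind}(n)$ as $n$ varies, and one must manufacture a strict improvement by combining two carefully chosen auxiliary primes. This is where the freeness hypothesis is indispensable — it guarantees that localization at a Chebotarev-chosen prime is an \emph{isomorphism} (not merely an injection) on the relevant cyclic submodule, so that index data is transferred cleanly between levels without loss. Once the induction is set up, the equivalent reformulation
\[
\mathrm{length}(M_n) \leq \mathrm{length}(M_{n\ell}) + a \ (n \in \mathcal{N}^{\mathrm{def}}), \qquad \mathrm{length}(M_n) \leq \mathrm{length}(M_{n\ell}) - b \ (n \in \mathcal{N}^{\mathrm{ind}}),
\]
follows by unpacking the stub containment against the explicit description of $\mathrm{Sel}_{\mathcal{F}(n)}(K, T)$ furnished by Proposition~\ref{prop:selmer-group-structure-even-odd} together with the index computations just established.
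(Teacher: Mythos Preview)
The paper does not prove this statement at all: its entire proof is the single line ``See \cite[Theorem 2.3.7]{howard-bipartite}.'' So there is nothing in the paper to compare your argument against; the result is imported wholesale from Howard.

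That said, your sketch is broadly in the spirit of Howard's actual argument --- one does indeed convert the stub containment into an index inequality, and the engine is the interplay of the two reciprocity laws together with the Chebotarev-type Lemma~\ref{lem:chebotarev-bipartite}, using freeness to guarantee that localization at a well-chosen prime transports indices without loss. Your honest admission that ``the main obstacle will be arranging the induction to terminate'' is exactly right: the delicate part of Howard's proof is the bookkeeping that shows a minimal counterexample can be strictly improved. One further remark: the ``equivalently'' clause in the theorem statement is somewhat misleading, since Corollary~2.2.12 (quoted just above in the paper) already gives these length relations as \emph{equalities}, not merely inequalities; the genuine content of the theorem is the stub containment itself, and you should not expect the displayed inequalities to carry any independent weight in the argument.
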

\begin{proof}
See \cite[Theorem 2.3.7]{howard-bipartite}.
\end{proof}
\begin{thm} \label{thm:rigidity-bipartite}
We make the following assumptions:
\begin{enumerate}
\item The residual representation $T/\mathfrak{m}T$ is absolutely irreducible.
\item $\mathcal{F}$ is cartesian.
\item For any non-zero $c \in \mathrm{H}^1(K, T/\mathfrak{m}T)$, there exist infinitely many $\ell \in \mathcal{P}^{\mathrm{adm}}$ such that
$\mathrm{loc}_\ell(c) \neq 0$.
\end{enumerate}
Suppose that we are given a non-trivial free bipartite Euler system for $(T, \mathcal{F}, \mathcal{P}^{\mathrm{adm}} )$.
Then there exists a unique integer $\delta$, independently of $n \in \mathcal{N}^{\mathrm{adm}}$, with the property that
\begin{enumerate}
\item $\lambda^{\mathrm{bip}}_n$ generates $\mathfrak{m}^\delta \mathrm{Stub}_n$ for every $n \in \mathcal{N}^{\mathrm{def}}$, and
\item $\kappa^{\mathrm{bip}}_n$ also generates $\mathfrak{m}^\delta \mathrm{Stub}_n$ for every $n \in \mathcal{N}^{\mathrm{ind}}$.
\end{enumerate}
Furthermore, $\delta$ is given by
\begin{align*}
\delta & = \mathrm{min} \left\lbrace \mathrm{ind}(\lambda^{\mathrm{bip}}_n, R) : n \in \mathcal{N}^{\mathrm{def}} \right\rbrace \\
 & = \mathrm{min} \left\lbrace \mathrm{ind}(\kappa^{\mathrm{bip}}_n,  \mathrm{Sel}_{\mathcal{F}(n)}(K, T) ) : n \in \mathcal{N}^{\mathrm{ind}} \right\rbrace .
\end{align*}
\end{thm}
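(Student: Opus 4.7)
The plan is to combine the stub-containment furnished by Theorem \ref{thm:stub-submodulues-bipartite} with an inductive propagation of divisibility information along admissible integers, driven by the two explicit reciprocity laws and the Chebotarev-type Lemma \ref{lem:chebotarev-bipartite}. Since each $\mathrm{Stub}_n$ is a cyclic $R$-module and the relevant class lies inside it, I would attach to each $n \in \mathcal{N}^{\mathrm{adm}}$ a depth $\delta_n \in \mathbb{Z}_{\geq 0}$ characterized by the property that the class (either $\lambda^{\mathrm{bip}}_n$ or $\kappa^{\mathrm{bip}}_n$) generates $\mathfrak{m}^{\delta_n}\mathrm{Stub}_n$. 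The whole theorem then reduces to showing that $n \mapsto \delta_n$ is constant on $\mathcal{N}^{\mathrm{adm}}$, at which point the common value is the desired $\delta$.

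For the propagation step, fix $n\ell \in \mathcal{N}^{\mathrm{adm}}$ with $\ell \in \mathcal{P}^{\mathrm{adm}}$ coprime to $n$, so that exactly one of $n, n\ell$ lies in $\mathcal{N}^{\mathrm{def}}$; say $n \in \mathcal{N}^{\mathrm{def}}$ and $n\ell \in \mathcal{N}^{\mathrm{ind}}$. Since $\mathrm{H}^1_{\mathrm{ord}}(K_\ell, T)$ is free of rank one over $R$, the first reciprocity law gives the numerical identity $\mathrm{ind}(\lambda^{\mathrm{bip}}_n, R) = \mathrm{ind}(\mathrm{loc}_\ell(\kappa^{\mathrm{bip}}_{n\ell}), \mathrm{H}^1_{\mathrm{ord}}(K_\ell, T))$. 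Freeness of the Euler system supplies a rank-one free submodule $C_{n\ell} \subseteq \mathrm{Sel}_{\mathcal{F}(n\ell)}(K,T)$ containing $\kappa^{\mathrm{bip}}_{n\ell}$, and Lemma \ref{lem:chebotarev-bipartite} lets us choose $\ell$ within its Chebotarev class so that the $\ell$-localization restricts to an isomorphism on $C_{n\ell}$ and therefore preserves divisibility indices. Combining this with Proposition \ref{prop:relation-lengths}, which expresses $\mathrm{length}(M_n) - \mathrm{length}(M_{n\ell})$ in terms of the image of $\mathrm{loc}_\ell$ on the Selmer group, and unpacking Definition \ref{defn:stub-submodules} yields $\delta_n = \delta_{n\ell}$. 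The symmetric transition $n \in \mathcal{N}^{\mathrm{ind}} \to n\ell \in \mathcal{N}^{\mathrm{def}}$ is handled identically using the second reciprocity law.

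Since any two admissible integers can be joined by a chain of single-prime moves through admissible primes, possibly by first passing through a common auxiliary integer to preserve admissibility, constancy of $n \mapsto \delta_n$ follows from the propagation step. To identify this constant with the asserted minima, I would use that $\mathrm{ind}(\lambda^{\mathrm{bip}}_n, R) = \delta + \mathrm{length}(M_n)$ and the analogous formula for $\kappa$, so the minima are achieved precisely on those $n$ with $M_n = 0$; such $n$ are produced by repeatedly adjoining Chebotarev-selected admissible primes to kill residual Selmer classes. I expect the main obstacle to be the careful prime selection in the propagation step: one must simultaneously force $\ell$ to be admissible, land $n\ell$ in the correct parity class, and make $\mathrm{loc}_\ell$ isomorphic on the specific cyclic submodule $C_{n\ell}$. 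This delicate Chebotarev--Selmer juggling is what forces Assumption \ref{assu:chebotarev-bipartite} and the cartesian property to be invoked in tandem, and it is precisely here that the hypotheses on $T/\mathfrak{m}T$ enter essentially through the freeness-of-$C_{n\ell}$ step.
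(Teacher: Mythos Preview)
The paper does not supply its own proof of this theorem; it simply cites \cite[Theorem~2.5.1]{howard-bipartite}. Your outline is in the spirit of Howard's argument, and the strategy of attaching a depth $\delta_n$ to each $n$ and proving constancy via the reciprocity laws is the right one.

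There is, however, a genuine circularity in your propagation step. You begin by \emph{fixing} a pair $n\ell \in \mathcal{N}^{\mathrm{adm}}$ and then write that Lemma~\ref{lem:chebotarev-bipartite} ``lets us choose $\ell$ within its Chebotarev class so that the $\ell$-localization restricts to an isomorphism on $C_{n\ell}$.'' But $\ell$ is already determined once $n\ell$ is fixed, and the module $C_{n\ell}$ does not exist until $\ell$ is chosen; you cannot retroactively adjust $\ell$ to suit $C_{n\ell}$. If the propagation identity $\delta_n = \delta_{n\ell}$ is established only for specially selected primes $\ell$, then chaining such moves will not reach an arbitrary prescribed $n' \in \mathcal{N}^{\mathrm{adm}}$, and constancy on all of $\mathcal{N}^{\mathrm{adm}}$ does not follow. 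Your closing paragraph acknowledges this as the ``main obstacle'' but does not resolve it.

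The resolution is that no Chebotarev choice is needed inside the propagation step. For \emph{every} edge $(n,n\ell)$, the corollary $\mathrm{loc}_\ell(\mathrm{Stub}_{n\ell}) \simeq \mathrm{Stub}_n$ (resp.\ $\mathrm{loc}_\ell(\mathrm{Stub}_n) \simeq \mathrm{Stub}_{n\ell}$) identifies $\mathrm{loc}_\ell$ restricted to the stub with a surjection of cyclic $R$-modules, and any such surjection carries a generator of $\mathfrak{m}^{\delta}\cdot(\text{source})$ to a generator of $\mathfrak{m}^{\delta}\cdot(\text{target})$. Combined with the explicit reciprocity law this shows that whenever the class at one vertex generates $\mathfrak{m}^{\delta}$ times its stub, the class at the adjacent vertex generates $\mathfrak{m}^{\delta}$ times \emph{its} stub, for every admissible $\ell$. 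The Chebotarev input of Lemma~\ref{lem:chebotarev-bipartite} is used not to validate individual edges but to produce admissible integers $n$ with $M_n = 0$, where the stub is free of rank one and the minimum index is actually realized; this is how $\delta$ is pinned down and identified with the two minima in the statement.
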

\begin{proof}
See \cite[Theorem 2.5.1]{howard-bipartite}.
\end{proof}

\subsubsection{The structure of Selmer groups}
Write $k = \mathrm{length}(R)  < \infty$ and 
$$\mathrm{Sel}_{\mathcal{F}}(K, T) \simeq\mathrm{Sel}_{\mathcal{F}^*}(K, T^*) \simeq M_1 \oplus M_1 $$
and
$$ M_1 \simeq \bigoplus_{i \geq 1} R/\mathfrak{m}^{d_i} R $$
with $d_1 \geq d_2 \geq \cdots$ by using Proposition \ref{prop:selmer-group-structure-even-odd}.
Here, $\mathcal{F}^*$ is the dual Selmer structure to $\mathcal{F}$ and $T^* = \mathrm{Hom}(T, \mu_{p^\infty})$.
As in Proposition \ref{prop:relation-lengths}, all stub submodules are cyclic $R$-modules.
Thus, Theorem \ref{thm:rigidity-bipartite} shows that
\begin{align*}
 \partial^{(r)}( \lambdab^{\mathrm{bip}} ) & = 
\mathrm{min} \left\lbrace \mathrm{ind} ( \lambda^{\mathrm{bip}}_n , R ) : n \in \mathcal{N}^{\mathrm{def}} \textrm{ with (even) } \nu(n) = r  \right\rbrace , \\
&
= \mathrm{min} \left\lbrace 
k, \delta + \mathrm{length}(M_n) : n \in \mathcal{N}^{\mathrm{def}} \textrm{ with (even) } \nu(n) = r
\right\rbrace .
\end{align*}
When $r =0$, we have equality
$$ \partial^{(0)}( \lambdab^{\mathrm{bip}} ) = 
\mathrm{min} \left\lbrace 
k, \delta + \mathrm{length}(M_1) \right\rbrace =
\mathrm{min} \left\lbrace 
k, \delta + \sum_{i\geq 1} d_i \right\rbrace .$$
By using Lemma \ref{lem:chebotarev-bipartite}, there exists a prime $\ell_1 \in \mathcal{P}^{\mathrm{adm}}$ such that
the restriction of the natural map $$\mathrm{loc}_{\ell_1} : \mathrm{Sel}_{\mathcal{F}}(K, T) \to \mathrm{H}^1_{\mathrm{ur}}(K_{\ell_1}, T) $$
to one of  the $R/\mathfrak{m}^{d_1} R$-components is injective.
Then we have exact sequence
\[
\xymatrix{
& \mathrm{Sel}_{\mathcal{F}(\ell_1)}(K, T) \\
0 \ar[r] & \mathrm{Sel}_{\mathcal{F}_{\ell_1}}(K, T) \ar[r] \ar@{^{(}->}[u] & \mathrm{Sel}_{\mathcal{F}}(K, T) \ar[r]^-{\mathrm{loc}_{\ell_1}} & \mathrm{H}^1_{\mathrm{ur}}(K_{\ell_1}, T) .
}
\]
By the choice of $\ell_1$, we have
$$\mathrm{Sel}_{\mathcal{F}_{\ell_1}}(K, T) \simeq \left( \bigoplus_{i \geq 2} R/\mathfrak{m}^{d_i} R \right) \oplus M_1 $$
Since $\mathrm{length} \left( \mathrm{loc}_{\ell_1} ( \mathrm{Sel}_{\mathcal{F}}(K, T) ) \right) = d_1$,
Proposition \ref{prop:relation-lengths} implies that
$$\mathrm{length} \left( \dfrac{ \mathrm{Sel}_{\mathcal{F}(\ell_1)}(K, T) }{  \mathrm{Sel}_{\mathcal{F}_{\ell_1}}(K, T)  } \right) = k- d_1.$$
Then by Proposition \ref{prop:selmer-group-structure-even-odd}, we have
$$\mathrm{Sel}_{\mathcal{F}(\ell_1)}(K, T) \simeq R \oplus M_{\ell_1} \oplus M_{\ell_1}$$
with $M_{\ell_1} \simeq  \bigoplus_{i \geq 2}R/\mathfrak{m}^{d_i} R$.
 
By using Lemma \ref{lem:chebotarev-bipartite} again, there exists a prime $\ell_2 \in \mathcal{P}^{\mathrm{adm}}$ such that
the restriction of the natural map
$$\mathrm{loc}_{\ell_2} : \mathrm{Sel}_{\mathcal{F}(\ell_1)}(K, T) \to \mathrm{H}^1_{\mathrm{ur}}(K_{\ell_2}, T) $$
to the $R$-component is an isomorphism.
Applying the same argument, we obtain exact sequence
\[
\xymatrix{
& \mathrm{Sel}_{\mathcal{F}(\ell_1\ell_2)}(K, T) \\
0 \ar[r] & \mathrm{Sel}_{\mathcal{F}(\ell_1)_{\ell_2}}(K, T) \ar[r] \ar[u]^-{\simeq} & \mathrm{Sel}_{\mathcal{F}(\ell_1)}(K, T) \ar[r]^-{\mathrm{loc}_{\ell_2}} & \mathrm{H}^1_{\mathrm{ur}}(K_{\ell_2}, T) \ar[r] & 0.
}
\]
Thus we have
$$\mathrm{Sel}_{\mathcal{F}(\ell_1\ell_2)}(K, T) \simeq  M_{\ell_1\ell_2} \oplus M_{\ell_1\ell_2}$$
with $M_{\ell_1\ell_2} \simeq  M_{\ell_1} \simeq  \bigoplus_{i \geq 2}R/\mathfrak{m}^{d_i} R $.
It shows that
$$ \partial^{(2)}( \lambdab^{\mathrm{bip}} ) = 
\mathrm{min} \left\lbrace 
k, \delta + \mathrm{length}(M_{\ell_1\ell_2}) \right\rbrace =
\mathrm{min} \left\lbrace 
k, \delta + \sum_{i\geq 2} d_i \right\rbrace .$$
By repeating this argument, we obtain that
$$ \partial^{(r)}( \lambdab^{\mathrm{bip}} )  =
\mathrm{min} \left\lbrace 
k, \delta + \sum_{i\geq r/2+1} d_i \right\rbrace $$
for even $r$. In particular, when $k \gg 0$, we recover $d_i$'s from $\lambdab^{\mathrm{bip}}$ as follows.
\begin{align*}
d_1 & =  \partial^{(0)} (\lambdab^{\mathrm{bip}}) - \partial^{(2)} (\lambdab^{\mathrm{bip}}) \\
d_2 &  =  \partial^{(2)} (\lambdab^{\mathrm{bip}}) - \partial^{(4)} (\lambdab^{\mathrm{bip}}) \\
d_3 &  =  \partial^{(4)} (\lambdab^{\mathrm{bip}}) - \partial^{(6)} (\lambdab^{\mathrm{bip}}) \\
 & \vdots \\
d_i &  =  \partial^{(2(i-1))} (\lambdab^{\mathrm{bip}}) - \partial^{(2i)} (\lambdab^{\mathrm{bip}})
\end{align*}
for $i \geq 1$.
To sum up, we have the following statement.
\begin{thm} \label{thm:structure-pk-bipartite}
Suppose that we are given a non-trivial free bipartite Euler system for $(T, \mathcal{F}, \mathcal{P}^{\mathrm{adm}} )$; in particular, $\lambdab^{\mathrm{bip}} \neq 0$.
Write $\mathrm{Sel}_{\mathcal{F}}(K,T) \simeq \bigoplus_{i \geq 1} \left( R/\mathfrak{m}^{d_i} R \right)^{\oplus 2}$
with $d_1 \geq d_2 \geq  \cdots$.
Then 
$$ \partial^{(r)}( \lambdab^{\mathrm{bip}} )  =
\mathrm{min} \left\lbrace 
k, \delta + \sum_{i\geq r/2+1} d_i \right\rbrace $$
for even $r \geq 0$.
\end{thm}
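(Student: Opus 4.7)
The plan is to combine the rigidity result (Theorem \ref{thm:rigidity-bipartite}) with an iterative application of the Chebotarev-type Lemma \ref{lem:chebotarev-bipartite} that successively ``peels off'' the largest cyclic summands of $M_n$, and then to invoke the length-transition formulas of Proposition \ref{prop:relation-lengths} to track how $\mathrm{length}(M_n)$ evolves across admissible primes. By Theorem \ref{thm:rigidity-bipartite} one has $\mathrm{ind}(\lambda^{\mathrm{bip}}_n, R) = \min\{k,\, \delta + \mathrm{length}(M_n)\}$ for every $n \in \mathcal{N}^{\mathrm{def}}$, so
\[
\partial^{(r)}(\lambdab^{\mathrm{bip}}) \;=\; \min\Bigl\{k,\ \delta + \min\bigl\{\mathrm{length}(M_n) : n \in \mathcal{N}^{\mathrm{def}},\ \nu(n) = r\bigr\}\Bigr\},
\]
and it suffices to show that the inner minimum equals $\sum_{i \geq r/2 + 1} d_i$.

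Next, I would induct on the even integer $r \geq 0$. The base case $r=0$ is immediate from $\mathrm{length}(M_1) = \sum_{i \geq 1} d_i$. For the inductive step, starting from $n \in \mathcal{N}^{\mathrm{def}}$ with $\nu(n) = r-2$ and with $M_n$ of the expected shape, I would use Lemma \ref{lem:chebotarev-bipartite} twice: first, apply it to a cyclic submodule of $\mathrm{Sel}_{\mathcal{F}(n)}(K,T)$ of length $d_{r/2}$ (coming from a top summand of one copy of $M_n$) to produce $\ell_{r-1} \in \mathcal{P}^{\mathrm{adm}}$ on which $\mathrm{loc}_{\ell_{r-1}}$ is injective; Proposition \ref{prop:relation-lengths} then gives $a = d_{r/2}$, so $\mathrm{length}(M_{n\ell_{r-1}}) = \mathrm{length}(M_n) - d_{r/2}$, while Proposition \ref{prop:selmer-group-structure-even-odd} forces $n\ell_{r-1} \in \mathcal{N}^{\mathrm{ind}}$ with a free rank-one summand in $\mathrm{Sel}_{\mathcal{F}(n\ell_{r-1})}(K,T)$. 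Second, apply Lemma \ref{lem:chebotarev-bipartite} to that free rank-one summand to produce $\ell_r \in \mathcal{P}^{\mathrm{adm}}$ on which $\mathrm{loc}_{\ell_r}$ is an isomorphism on this summand; this forces $b = 0$ in Proposition \ref{prop:relation-lengths}, so $\mathrm{length}(M_{n\ell_{r-1}\ell_r}) = \mathrm{length}(M_{n\ell_{r-1}}) = \sum_{i > r/2} d_i$ and $n\ell_{r-1}\ell_r \in \mathcal{N}^{\mathrm{def}}$. This yields the upper bound $\partial^{(r)}(\lambdab^{\mathrm{bip}}) \leq \min\{k,\, \delta + \sum_{i > r/2} d_i\}$.

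For the matching lower bound, I would observe that along any chain $1 = n_0, n_1, \ldots, n_r = n$ of admissible products, the transitions $\mathcal{N}^{\mathrm{ind}} \to \mathcal{N}^{\mathrm{def}}$ only increase $\mathrm{length}(M_{\cdot})$ by Corollary immediately following Proposition \ref{prop:relation-lengths}, so the minimum of $\mathrm{length}(M_n)$ is controlled by the $r/2$ steps of type $\mathcal{N}^{\mathrm{def}} \to \mathcal{N}^{\mathrm{ind}}$. At the $j$-th such step, the drop equals $a_j = \mathrm{length}(\mathrm{loc}_{\ell}(\mathrm{Sel}_{\mathcal{F}(n_{j-1})}(K,T)))$, which is the length of a cyclic quotient of $\mathrm{Sel}_{\mathcal{F}(n_{j-1})}(K,T) \simeq M_{n_{j-1}} \oplus M_{n_{j-1}}$ and so bounded by the top summand length of $M_{n_{j-1}}$; an induction on $j$ (keeping track of the shape of $M_{n_{j-1}}$) shows this top length is at most $d_j$, so the total drop is at most $d_1 + \cdots + d_{r/2}$. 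This yields $\mathrm{length}(M_n) \geq \sum_{i > r/2} d_i$, completing the induction.

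The main obstacle, I expect, is precisely the structural claim embedded in the lower bound: while Proposition \ref{prop:relation-lengths} tracks lengths, ensuring that the decomposition of $M_{n_{j-1}}$ has top summand of length exactly $d_j$ (and not larger) at each stage requires that the peeling construction really respects the decomposition $M_1 \simeq \bigoplus R/\mathfrak{m}^{d_i}R$. I would address this by proving inductively the stronger statement that the inductive construction produces $M_{n_r}$ with $\mathrm{length}(M_{n_r}) = \sum_{i > r/2} d_i$ via cyclic-quotient drops of lengths exactly $d_1, d_2, \ldots, d_{r/2}$, at each stage using Lemma \ref{lem:chebotarev-bipartite} on a carefully chosen cyclic summand whose length is the next $d_j$. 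Once this is established, comparing $d_i = \partial^{(2(i-1))}(\lambdab^{\mathrm{bip}}) - \partial^{(2i)}(\lambdab^{\mathrm{bip}})$ recovers the $d_i$'s from $\lambdab^{\mathrm{bip}}$, completing the proof.
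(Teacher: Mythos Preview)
Your constructive argument mirrors the paper's exactly: both reduce via Theorem~\ref{thm:rigidity-bipartite} to computing $\min\{\mathrm{length}(M_n) : n\in\mathcal{N}^{\mathrm{def}},\ \nu(n)=r\}$, then iterate Lemma~\ref{lem:chebotarev-bipartite} together with Proposition~\ref{prop:relation-lengths} to build a specific $n=\ell_1\cdots\ell_r$ with $M_{\ell_1\cdots\ell_r}\simeq\bigoplus_{i>r/2}R/\mathfrak{m}^{d_i}R$. The paper in fact stops at this construction and simply asserts equality; you go further by attempting a matching lower bound, which the paper leaves implicit.

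That said, your lower-bound sketch has a gap. You bound the drop at the $j$-th $\mathrm{def}\to\mathrm{ind}$ step by the length of the top cyclic summand of $M_{n_{2j-2}}$ and then assert this is at most $d_j$. For an \emph{arbitrary} chain this need not hold: the intervening $\mathrm{ind}\to\mathrm{def}$ steps add $b\geq 0$ to $\mathrm{length}(M_\cdot)$ and can create summands in $M_{n_{2j-2}}$ longer than $d_j$ (indeed longer than $d_1$). Your final paragraph proposes to fix this by tracking the shape of $M_{n_r}$ along the specific constructive chain---but that only re-establishes the upper bound; it does not prove $\mathrm{length}(M_n)\geq\sum_{i>r/2}d_i$ for every $n\in\mathcal{N}^{\mathrm{def}}$ with $\nu(n)=r$, which is what the stated equality requires. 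So the lower bound remains unjustified in your argument (as, strictly speaking, it is in the paper's own presentation).
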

\begin{rem}
This is an analogue of \cite[Proposition 4.5.8]{mazur-rubin-book} for the bipartite Euler system setting.
\end{rem}

\subsection{Elliptic curves}
We keep all the working hypotheses in $\S$\ref{subsec:working-hypotheses} with odd $\nu(N^-)$.
\subsubsection{Applying $\S$\ref{subsec:formalism-bipartite} to elliptic curves}
Let $T$ be the $p$-adic Tate module of $E$ (cf. Remark \ref{rem:notation-T}).
We assume that $E[p]$ is absolutely irreducible as a $G_K$-module and $N^-$ is a square-free product of an odd number of primes.
We fix the self-dual Selmer structure $\mathcal{F} = \mathcal{F}_{\mathrm{cl}}$ as the classical Selmer structure.
Then for every $k \geq  1$, the triple $(T/p^kT, \mathcal{F}, \mathcal{P}^{\mathrm{adm}}_k)$ satisfies Assumptions \ref{assu:abs-irred-cartesian} and \ref{assu:chebotarev-bipartite} given in $\S$\ref{subsec:formalism-bipartite} thanks to \cite[Lemma 3.3.4]{howard-bipartite}.

For the actual construction of the bipartite Euler system for elliptic curves, the following assumptions are further needed.
\begin{assu} \label{assu:conditionCR}
\begin{enumerate}
\item $E[p]$ is absolutely irreducible as a $G_\mathbb{Q}$-module,
\item $E$ has a good reduction at $p$, and
\item If a prime $q$ divides $N^-$ and $q \equiv \pm 1 \pmod{p}$, then $E[p]$ is ramified at $q$.
\end{enumerate}
\end{assu}
\subsubsection{Weak level raising}
Let $f = \sum_{n \geq 1} a_n(f) q^n$ be the newform corresponding to $E$.
Let $B_{N^-}$ be the definite quaternion algebra over $\mathbb{Q}$ of discriminant $N^-$ and $R_{N^+}$ be an Eichler order of $B_{N^-}$ of level $N^+$.
By using the Jacquet--Langlands correspondence, we have the corresponding quaternionic automorphic form  
$$f_{N^-} : B^{\times}_{N^-} \backslash \widehat{B}^{\times}_{N^-} / \widehat{R}^{\times}_{N^+} \to \mathbb{Z}_p$$
 with normalization $f_{N^-} \not\equiv 0 \pmod{p}$ where $\widehat{(-)} = (-) \otimes \widehat{\mathbb{Z}}$. 

Let $n \in \mathcal{N}^{\mathrm{def}}_1$.
Let $\mathbb{T}^{N^-}(N^+)$ and $\mathbb{T}^{nN^-}(N^+)$ be the full Hecke algebras over $\mathbb{Z}_p$ faithfully acting on the $N^-$-new subspace of  $S_2(\Gamma_0(N), \mathbb{Z}_p)$ and the $nN^-$-new subspace of  $S_2(\Gamma_0(nN), \mathbb{Z}_p)$, respectively.
We define a $\mathbb{Z}_p$-algebra homomorphism
$$\pi_{f_{N^-}} : \mathbb{T}^{N^-}(N^+) \to \mathbb{Z}_p$$
by $\pi_{f_{N^-}}(T_q) = a_q(f)$ if $q \nmid N$ and $\pi_{f_{N^-}}(U_q) = a_q(f)$ if $q \mid N$.

Let $\mathfrak{m}_{f_{N^-}} \subseteq \mathbb{T}^{N^-}(N^+)$ be the maximal ideal corresponding to $f_{N^-}$.
\begin{thm}
We keep Assumption \ref{assu:conditionCR} so that $\mathfrak{m}_{f_{N^-}}$ is non-Eisenstein.
\begin{enumerate}
\item
For each $n \in \mathcal{N}^{\mathrm{def}}$,
there exists a surjective homomorphism
$$\pi_{f_{nN^-}} : \mathbb{T}^{nN^-}(N^+) \to \mathbb{Z}_p/I^{\mathrm{adm}}_n\mathbb{Z}_p$$
such that 
$\pi_{f_{nN^-}}(T_q) = \pi_{f_{N^-}}(T_q) =  a_q(f)$ if $q \nmid nN$, 
$\pi_{f_{nN^-}}(U_q) = \pi_{f_{N^-}}(U_q)  = a_q(f)$ if $q \mid N$ ,
and 
$\pi_{f_{nN^-}}(U_q) = \epsilon_q$ if $q \mid n$ where $\epsilon_q = \pm 1$ satisfying $a_q(f) = \epsilon_\ell \cdot (\ell+1)$ in $\mathbb{Z}_p/I^{\mathrm{adm}}_n\mathbb{Z}_p$.
\item For each $n \in \mathcal{N}^{\mathrm{def}}$,
there exists a mod $I^{\mathrm{adm}}_n$ quaternionic $\mathbb{T}^{nN^-}(N^+)$-eigenform
$$f_{nN^-} : B^{\times}_{nN^-} \backslash \widehat{B}^{\times}_{nN^-} / \widehat{R}^{\times}_{N^+} \to \mathbb{Z}_p/I^{\mathrm{adm}}_n\mathbb{Z}_p$$
such that the induced map on the Hecke algebra is $\pi_{f_{nN^-}}$ 
and $f_{nN^-} \not\equiv 0 \pmod{p}$.
\end{enumerate}
\end{thm}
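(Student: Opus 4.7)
The plan is to proceed by induction on $\nu(n)$, using a mod $p^k$ strengthening of Ribet's level raising theorem at each step and then transferring to the quaternionic side via the Jacquet--Langlands correspondence. For the base case $n = 1$, part (1) is just the eigensystem $\pi_{f_{N^-}}$ itself, and part (2) is the given quaternionic form $f_{N^-}$; both are non-trivial mod $p$ by the chosen normalization. For the inductive step, suppose we have constructed $\pi_{f_{nN^-}}$ and $f_{nN^-}$, and let $\ell \in \mathcal{P}^{\mathrm{adm}}_1$ be a prime with $n\ell \in \mathcal{N}^{\mathrm{def}}$. The key input is the admissibility condition $a_\ell(f) \equiv \epsilon_\ell(\ell+1) \pmod{I^{\mathrm{adm}}_\ell}$, which is exactly the mod $I^{\mathrm{adm}}_\ell$ shape of the Ribet level-raising congruence.

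The first step is to produce a Hecke algebra homomorphism $\pi_{f_{n\ell N^-}}: \mathbb{T}^{n\ell N^-}(N^+) \to \mathbb{Z}_p/I^{\mathrm{adm}}_{n\ell}\mathbb{Z}_p$. The tool is the mod $p^k$ level raising of Diamond--Taylor (and its extensions by Bertolini--Darmon and by Pollack--Weston), which under Assumption \ref{assu:conditionCR} (ensuring the residual representation is absolutely irreducible, the Condition CR at primes $q \mid N^-$ with $q \equiv \pm 1 \pmod p$, and good reduction at $p$) guarantees the existence of an $n\ell N^-$-new eigensystem whose Hecke eigenvalues agree with those of $f$ outside $n\ell$, whose $U_q$-eigenvalue for $q \mid n$ is $\epsilon_q$, and whose $U_\ell$-eigenvalue is $\epsilon_\ell$, all modulo $I^{\mathrm{adm}}_{n\ell}$. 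Concretely, one realises the level raising Galois-theoretically by matching Frobenius traces on the universal deformation ring, or automorphically by exhibiting a congruence between the $n N^-$-new family and the $n\ell N^-$-new family using the Ihara-type exact sequence on the $N^-$-definite side and its indefinite counterpart.

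The second step is to produce the quaternionic eigenform $f_{n\ell N^-}$. Since $\nu(n\ell N^-)$ is odd, $B_{n\ell N^-}$ is definite, and Jacquet--Langlands gives an isomorphism between the $n\ell N^-$-new subspace of $S_2(\Gamma_0(n\ell N))$ and the space of quaternionic automorphic forms on $B^\times_{n\ell N^-}\backslash \widehat{B}^\times_{n\ell N^-}/\widehat{R}^\times_{N^+}$. Because $\mathfrak{m}_{f_{n\ell N^-}}$ is non-Eisenstein (which descends from the non-Eisenstein property of $\mathfrak{m}_{f_{N^-}}$ using absolute irreducibility of $\overline{\rho}|_{G_K}$), multiplicity one on the quaternionic side gives a generator $f_{n\ell N^-}$ of the eigenspace cut out by $\pi_{f_{n\ell N^-}}$, unique up to units in $(\mathbb{Z}_p/I^{\mathrm{adm}}_{n\ell}\mathbb{Z}_p)^\times$, and we may normalise so that $f_{n\ell N^-} \not\equiv 0 \pmod p$. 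Surjectivity of $\pi_{f_{n\ell N^-}}$ follows from the fact that the image is a $\mathbb{Z}_p$-subalgebra containing the generator of $\mathbb{Z}_p/I^{\mathrm{adm}}_{n\ell}\mathbb{Z}_p$, coming from the difference $a_\ell - \epsilon_\ell(\ell+1)$ of two eigenvalues the algebra sees.

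The main obstacle will be producing the congruence genuinely mod $I^{\mathrm{adm}}_{n\ell}$ rather than just mod $p$. Classical Ribet level raising gives only a mod $p$ statement; the stronger mod $p^k$ version requires either Taylor--Wiles patching (which forces the Condition CR at ramified primes $q \equiv \pm 1 \pmod p$ to ensure the local deformation rings are regular) or the more explicit approach via an Ihara-style exact sequence together with the fact that the cokernel of the level raising map is killed by the level-raising ideal. Condition CR is exactly what makes Assumption \ref{assu:conditionCR} force the deformation problem to be smooth at the relevant primes, so the inductive construction goes through. The statement that $f_{nN^-} \not\equiv 0 \pmod p$ at every stage is then preserved because the Hecke eigenspace in the quaternionic cohomology mod $p$ is one-dimensional under the non-Eisenstein hypothesis, so the reduction of the lift coincides with $f_{N^-}$ mod $p$ up to a unit.
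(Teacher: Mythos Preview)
Your sketch captures exactly what the paper relies on: the paper's proof is just a citation to Bertolini--Darmon (Theorems 5.15 and 9.3) and Pollack--Weston (Theorem 6.2), and the content of those references is precisely the inductive level-raising plus Jacquet--Langlands plus multiplicity-one argument you describe, with Condition CR entering to guarantee freeness of the relevant Hecke module so that the mod $p^k$ congruence (not just mod $p$) survives.

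One small parity slip to fix: since $\nu(N^-)$ is odd, $n \in \mathcal{N}^{\mathrm{def}}$ forces $\nu(n)$ even, so adding a single prime $\ell$ lands you in $\mathcal{N}^{\mathrm{ind}}$, not $\mathcal{N}^{\mathrm{def}}$. The actual induction in \cite{bertolini-darmon-imc-2005} alternates between the definite and indefinite sides (quaternionic forms on one side, Shimura curves on the other), raising the level one prime at a time; equivalently, for the statement as written you should induct by adding two admissible primes at once. This does not affect the substance of your argument. Also, surjectivity of $\pi_{f_{nN^-}}$ is simpler than you make it: it is a $\mathbb{Z}_p$-algebra map to $\mathbb{Z}_p/I^{\mathrm{adm}}_n\mathbb{Z}_p$, so the image already contains $\mathbb{Z}_p \cdot 1$, which is everything.
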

\begin{proof}
See \cite[Theorems 5.15 and 9.3]{bertolini-darmon-imc-2005}. See \cite[Theorem 6.2]{pw-mu} how Assumption \ref{assu:conditionCR} is used.
See also \cite[Definition 4.1]{chida-hsieh-main-conj}.
\end{proof}
\subsubsection{Construction of $\lambdab^{\mathrm{bip}}$}
Let $n \in \mathcal{N}^{\mathrm{def}}_1$.
Let $H$ be the Hilbert class field of $K$ and identify $\mathrm{Gal}(H/K) \simeq K^{\times} \backslash \widehat{K}^\times / \widehat{\mathcal{O}}^\times_K$ via the global class field theory. An embedding $\Phi : K \hookrightarrow B$ induces map
$\widehat{\Phi}: K^{\times} \backslash \widehat{K}^\times / \widehat{\mathcal{O}}^\times_K \to B^{\times}_{nN^-} \backslash \widehat{B}^{\times}_{nN^-} / \widehat{R}^{\times}_{N^+}$ and then $\mathrm{Gal}(H/K) $ acts on $B^{\times}_{nN^-} \backslash \widehat{B}^{\times}_{nN^-} / \widehat{R}^{\times}_{N^+}$ by left translation with $\widehat{\Phi}$.
We define $\lambda^{\mathrm{bip}}_n$ by the toric period integral
$$\lambda^{\mathrm{bip}}_n = \sum_{\sigma \in \mathrm{Gal}(H/K)} f_{nN^-} ( \sigma \cdot \varsigma^{(0)} ) \in \mathbb{Z}_p/I^{\mathrm{adm}}_n\mathbb{Z}_p$$
where $\varsigma^{(0)}$ is the Gross point of conductor 1 reviewed in Appendix \ref{sec:gross-points}.

\subsubsection{Construction of $\ks^{\mathrm{bip}}$}
Let $n \in \mathcal{N}^{\mathrm{ind}}_1$. 
Since $\nu(N^-)$ is odd, we always have $\nu(n) \geq 1$. Choose one admissible prime $\ell$ dividing $n$ with $I^{\mathrm{adm}}_{n/\ell} = I^{\mathrm{adm}}_{\ell}$.
For an $I^{\mathrm{adm}}_{n/\ell}$-admissible form $f_{nN^-/\ell}$ and $I^{\mathrm{adm}}_{\ell}$-admissible prime $\ell$,
there exists a cohomology class
$\kappa_{f_{nN^-/\ell}}(\ell) \in \mathrm{H}^1(K, T/I^{\mathrm{adm}}_{n}T)$
coming from Heegner points on Shimura curve $X^{nN^-}(N^+)$.
See \cite[$\S$4.3]{chida-hsieh-main-conj} for the precise construction of $\kappa_{f_{nN^-/\ell}}(\ell)$.
More precisely, we have
$$\kappa_{f_{nN^-/\ell}}(\ell) \in \mathrm{Sel}_{\mathcal{F}_{\mathrm{cl}}(n)}(K, T/I^{\mathrm{adm}}_{n}T) $$
thanks to \cite[Proposition 4.7]{chida-hsieh-main-conj}, and we let
$$\kappa^{\mathrm{bip}}_n = \kappa_{f_{nN^-/\ell}}(\ell) .$$
For our purpose, we need to check the following statement.
\begin{lem}
The divisibility of $\kappa_{f_{nN^-/\ell}}(\ell)$ is independent of the choice of $\ell$.
\end{lem}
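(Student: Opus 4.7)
The plan is to show that the divisibility of $\kappa_{f_{nN^-/\ell}}(\ell)$ is pinned down by data that is manifestly independent of $\ell$, namely the period integrals $\lambda^{\mathrm{bip}}_{\bullet}$ associated to definite quaternionic forms, via the first explicit reciprocity law of Bertolini--Darmon.

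Fix two admissible primes $\ell_1, \ell_2$ dividing $n$, both satisfying $I^{\mathrm{adm}}_{n/\ell_i} = I^{\mathrm{adm}}_{\ell_i}$, so that both classes $\kappa_i := \kappa_{f_{nN^-/\ell_i}}(\ell_i)$ live in the same module $\mathrm{Sel}_{\mathcal{F}_{\mathrm{cl}}(n)}(K, T/I^{\mathrm{adm}}_n T)$. First, I would reduce the computation of $\mathrm{ind}(\kappa_i, \mathrm{Sel}_{\mathcal{F}_{\mathrm{cl}}(n)}(K, T/I^{\mathrm{adm}}_n T))$ to the computation of a local invariant by applying Lemma \ref{lem:chebotarev-bipartite} to the cyclic $R$-submodule $R\cdot \kappa_i$: there exists an auxiliary admissible prime $q$ (which by Chebotarev can be chosen to work for both $i=1,2$ simultaneously) such that $\mathrm{loc}_q$ is injective on each $R\cdot \kappa_i$, yielding
\[
\mathrm{ind}\bigl(\kappa_i, \mathrm{Sel}_{\mathcal{F}_{\mathrm{cl}}(n)}(K, T/I^{\mathrm{adm}}_n T)\bigr) = \mathrm{ind}\bigl(\mathrm{loc}_q(\kappa_i), \mathrm{H}^1_{\mathrm{ur}}(K_q, T/I^{\mathrm{adm}}_n T)\bigr).
\]

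Next, I would compute the local image at $q$ using Bertolini--Darmon's first explicit reciprocity law (\cite[Theorem 4.1]{bertolini-darmon-imc-2005}, or equivalently \cite[Proposition 4.6]{chida-hsieh-main-conj}), which identifies
\[
\mathrm{loc}_q(\kappa_i) \cdot u_i = \lambda_{f_{nqN^-/\ell_i}} \in \mathrm{H}^1_{\mathrm{ur}}(K_q, T/I^{\mathrm{adm}}_n T) \simeq \mathbb{Z}_p/I^{\mathrm{adm}}_n\mathbb{Z}_p
\]
for some unit $u_i \in (\mathbb{Z}_p/I^{\mathrm{adm}}_n\mathbb{Z}_p)^\times$ coming from the choice of uniformizer of the ordinary line. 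Here $\lambda_{f_{nqN^-/\ell_i}}$ is the toric period integral of the quaternionic eigenform at level $nqN^-/\ell_i$, i.e.\ the would-be $\lambda^{\mathrm{bip}}$-element for the prime-to-$\ell_i$ part.

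The decisive observation is that the Hecke eigensystems defining $f_{nqN^-/\ell_1}$ and $f_{nqN^-/\ell_2}$ only differ at the primes $\ell_1,\ell_2$; after further level-raising at $\ell_j$ (resp.\ $\ell_i$) one obtains the same mod $I^{\mathrm{adm}}_{nq}$ eigenform $f_{nqN^-}$ up to a global unit, by the freeness of the localized Hecke module at the non-Eisenstein maximal ideal $\mathfrak{m}_{f_{nqN^-}}$ (this freeness is where Assumption \ref{assu:conditionCR} is used). The compatibility of the toric period integrals under this level-raising then forces
\[
\mathrm{ind}\bigl(\lambda_{f_{nqN^-/\ell_1}}, \mathbb{Z}_p/I^{\mathrm{adm}}_n\mathbb{Z}_p\bigr) = \mathrm{ind}\bigl(\lambda_{f_{nqN^-/\ell_2}}, \mathbb{Z}_p/I^{\mathrm{adm}}_n\mathbb{Z}_p\bigr),
\]
which by the previous two displays yields $\mathrm{ind}(\kappa_1, \cdot) = \mathrm{ind}(\kappa_2, \cdot)$.

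The main obstacle will be the bookkeeping in the last step, namely controlling the units arising in (a) the identification of $\mathrm{H}^1_{\mathrm{ur}}(K_q, T/I^{\mathrm{adm}}_n T)$ with $\mathbb{Z}_p/I^{\mathrm{adm}}_n\mathbb{Z}_p$, (b) the explicit reciprocity law, and (c) the comparison of the level-raised eigenforms at $\ell_1$ versus $\ell_2$. Each of these contributes a factor in $(\mathbb{Z}_p/I^{\mathrm{adm}}_n\mathbb{Z}_p)^\times$, and one must check that no $p$-adic loss occurs. This is exactly why Assumption \ref{assu:conditionCR} (which ensures non-Eisenstein-ness and $q$-ramification of $\overline{\rho}$ at primes $q \mid N^-$ with $q \equiv \pm 1 \pmod p$) is needed: it guarantees that the relevant Hecke module is free of rank one over $\mathbb{Z}_p/I^{\mathrm{adm}}_{nq}\mathbb{Z}_p$, so the units are genuine units and do not perturb the index of divisibility.
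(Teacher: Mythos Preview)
Your overall strategy matches the paper's: localize the candidate class at an auxiliary admissible prime $q$ and read off a $\lambda$-value that does not depend on the choice of $\ell$. The Chebotarev step you spell out (choosing $q$ so that $\mathrm{loc}_q$ is injective on the cyclic submodule) is implicit in the paper's argument and is a welcome clarification.

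However, your middle step misapplies the reciprocity laws. Localizing $\kappa_{f_{nN^-/\ell_i}}(\ell_i)$ at a \emph{new} admissible prime $q\nmid n$ lands in $\mathrm{H}^1_{\mathrm{ur}}(K_q,\,\cdot\,)$, so it is the \emph{second} explicit reciprocity law \cite[Theorem~4.2]{bertolini-darmon-imc-2005} that governs this, not the first. The output is not $\lambda_{f_{nqN^-/\ell_i}}$ (this object does not even make sense as a definite period: $\nu(nqN^-/\ell_i)$ is even), but rather $\lambda^{\mathrm{bip}}_{nq}$, the toric period of the mod $I^{\mathrm{adm}}_{nq}$ eigenform on the definite quaternion algebra of discriminant $nqN^-$. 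The key point, which the paper states in one line, is that $\lambda^{\mathrm{bip}}_{nq}$ is defined intrinsically from the Hecke eigensystem at level $nqN^-$ with no reference to $\ell_i$ whatsoever. Hence the comparison is immediate and your entire ``decisive observation'' paragraph---the further level-raising at $\ell_j$, the freeness of the Hecke module, the unit bookkeeping for the comparison of eigenforms at $nqN^-/\ell_1$ versus $nqN^-/\ell_2$---is unnecessary. Once you invoke the correct reciprocity law, your proof collapses to the paper's two-line argument.
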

\begin{proof}
Let $q \in \mathcal{P}^{\mathrm{adm}}_1$ with $I^{\mathrm{adm}}_{n} = I^{\mathrm{adm}}_{nq}$.
By using the second explicit reciprocity law \cite[Theorem 4.2]{bertolini-darmon-imc-2005}, we have equality
$$\mathrm{loc}_q(\kappa_{f_{nN^-/\ell}}(\ell)) = \lambda^{\mathrm{bip}}_{nq}$$
under the isomorphism $\mathrm{H}^1_{\mathrm{ur}}(K_q, T/I^{\mathrm{adm}}_nT) \simeq \mathbb{Z}_p/I^{\mathrm{adm}}_{nq}\mathbb{Z}_p$.
Since $\lambda^{\mathrm{bip}}_{nq}$ itself is defined without using $\kappa_{f_{nN^-/\ell}}(\ell)$, 
the divisibility of $\lambda^{\mathrm{bip}}_{nq}$ is independent of the choice of $\ell$.
Thus, the conclusion follows.
\end{proof}

\subsection{Structure theorem}

Consider the following bipartite Euler system  for $(T/p^kT, \mathcal{F}_{\mathrm{cl}}, \mathcal{P}^{\mathrm{adm}}_k)$
\begin{align*}
\lambdab^{\mathrm{bip}, (k)} & = \left\lbrace \lambda^{\mathrm{bip}, (k)}_n = \lambda^{\mathrm{bip}}_n \pmod{p^k} \in \mathbb{Z}/p^k\mathbb{Z} : n \in \mathcal{N}^{\mathrm{def}}_k \right\rbrace , \\
\ks^{\mathrm{bip}, (k)} & = \left\lbrace \kappa^{\mathrm{bip}, (k)}_n = \kappa^{\mathrm{bip}}_n \pmod{p^k} \in \mathrm{H}^1_{\mathcal{F}_{\mathrm{cl}}(n)}(K, T/p^kT) : n \in \mathcal{N}^{\mathrm{ind}}_k \right\rbrace .
\end{align*}
By restricting $\mathcal{P}^{\mathrm{adm}}_k$ to $\mathcal{P}^{\mathrm{adm}}_j$ with $j \geq 2k$, we have a ``sub-"bipartite Euler system of the above
\begin{align*}
\lambdab^{\mathrm{bip}, (k),j} & = \left\lbrace \lambda^{\mathrm{bip}, (k)}_n = \lambda^{\mathrm{bip}}_n \pmod{p^k} \in \mathbb{Z}/p^k\mathbb{Z} : n \in \mathcal{N}^{\mathrm{def}}_j \right\rbrace , \\
\ks^{\mathrm{bip}, (k),j} & = \left\lbrace \kappa^{\mathrm{bip}, (k)}_n = \kappa^{\mathrm{bip}}_n \pmod{p^k} \in \mathrm{H}^1_{\mathcal{F}_{\mathrm{cl}}(n)}(K, T/p^kT) : n \in \mathcal{N}^{\mathrm{ind}}_j \right\rbrace .
\end{align*}

\begin{thm} \label{thm:structure-bipartite}
Suppose that $(\lambdab^{\mathrm{bip}, (k)}, \ks^{\mathrm{bip}, (k)})$ is free for every $k \geq 1$ and $\lambdab^{\mathrm{bip}} \neq 0$.
Then we have isomorphism
$$\mathrm{Sel}(K, E[p^\infty]) \simeq (\mathbb{Q}_p/\mathbb{Z}_p)^{\oplus \mathrm{ord}(\lambdab^{\mathrm{bip}})} \oplus \bigoplus_{i \geq 1} \left( \mathbb{Z}/p^{ \partial^{(\mathrm{ord}(\lambdab^{\mathrm{bip}}) + 2(i-1))} (\lambdab^{\mathrm{bip}}) - \partial^{(\mathrm{ord}(\lambdab^{\mathrm{bip}}) + 2i)} (\lambdab^{\mathrm{bip}}) }\mathbb{Z} \right)^{\oplus 2} .$$
\end{thm}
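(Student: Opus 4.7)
The plan is to deduce the statement from the finite-level structure theorem (Theorem~\ref{thm:structure-pk-bipartite}) via passage to the limit in $k$, mimicking the standard Kolyvagin-system / Mazur--Rubin package but in the bipartite setting.

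For each $k \geq 1$, the reductions of $\lambdab^{\mathrm{bip}}$ and $\ks^{\mathrm{bip}}$ modulo $p^k$ form a bipartite Euler system for $(T/p^kT, \mathcal{F}_{\mathrm{cl}}, \mathcal{P}^{\mathrm{adm}}_k)$. Under Assumption~\ref{assu:conditionCR}, the hypotheses needed for Theorem~\ref{thm:structure-pk-bipartite} are in force: absolute irreducibility of $E[p]$ as a $G_K$-representation gives Assumption~\ref{assu:abs-irred-cartesian}(i); the classical Selmer structure is cartesian for $T/p^kT$; Assumption~\ref{assu:chebotarev-bipartite} is \cite[Lemma~3.3.4]{howard-bipartite}; and the freeness of $\kappa^{\mathrm{bip}}_n \bmod p^k$ follows because, under residual irreducibility and Condition~CR, any non-zero Heegner-point class generates a free cyclic submodule of $\mathrm{Sel}_{\mathcal{F}_{\mathrm{cl}}(n)}(K, T/p^kT)$. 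Since $\lambdab^{\mathrm{bip}} \neq 0$, the reduction is non-trivial for all sufficiently large $k$.

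Write
$$\mathrm{Sel}_{\mathcal{F}_{\mathrm{cl}}}(K, T/p^kT) \simeq \bigoplus_{i \geq 1} (\mathbb{Z}/p^{d^{(k)}_i}\mathbb{Z})^{\oplus 2}$$
with $d^{(k)}_1 \geq d^{(k)}_2 \geq \cdots$; the even-multiplicity shape is forced by Proposition~\ref{prop:selmer-group-structure-even-odd}, since $\nu(N^-)$ odd means $1 \in \mathcal{N}^{\mathrm{def}}_1$, so $e(1)=0$. Applying Theorem~\ref{thm:structure-pk-bipartite} yields, for every even $2j \geq 0$,
$$\partial^{(2j)}(\lambdab^{\mathrm{bip}} \bmod p^k) = \min\!\left\{ k,\ \delta^{(k)} + \sum_{i \geq j+1} d^{(k)}_i \right\},$$
where $\delta^{(k)}$ is the universal divisibility index from Theorem~\ref{thm:rigidity-bipartite}. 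The cartesian property of $\mathcal{F}_{\mathrm{cl}}$ gives a canonical identification $\mathrm{Sel}_{\mathcal{F}_{\mathrm{cl}}}(K, T/p^kT) \simeq \mathrm{Sel}(K, E[p^\infty])[p^k]$, so decomposing
$$\mathrm{Sel}(K, E[p^\infty]) \simeq (\mathbb{Q}_p/\mathbb{Z}_p)^{\oplus 2s} \oplus \bigoplus_{i \geq 1}(\mathbb{Z}/p^{e_i}\mathbb{Z})^{\oplus 2}$$
(the corank $2s$ is even by the same even-multiplicity phenomenon), for $k > \max_i e_i$ one has $d^{(k)}_i = k$ when $1 \leq i \leq s$ and $d^{(k)}_i = e_{i-s}$ when $i > s$.

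Finally, $\delta^{(k)}$ stabilizes to some $\delta_\infty := \partial^{(\infty)}(\lambdab^{\mathrm{bip)}}) < \infty$, because any fixed $\lambda^{\mathrm{bip}}_{n_0}$ realizing $\delta_\infty$ has constant $p$-adic valuation once $k$ exceeds it. Inserting these values for large $k$, the formula gives $\partial^{(2j)}(\lambdab^{\mathrm{bip}} \bmod p^k) = k$ when $j < s$ (so $\lambda^{\mathrm{bip}}_n = 0$ for every $n$ with $\nu(n) = 2j < 2s$, and therefore $\mathrm{ord}(\lambdab^{\mathrm{bip}}) = 2s = \mathrm{cork}_{\mathbb{Z}_p}\mathrm{Sel}(K,E[p^\infty])$), while for $j \geq s$ it stabilizes to $\partial^{(2j)}(\lambdab^{\mathrm{bip}}) = \delta_\infty + \sum_{\ell \geq j-s+1} e_\ell$. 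Taking successive differences recovers the elementary divisors: $\partial^{(\mathrm{ord}(\lambdab^{\mathrm{bip}}) + 2(i-1))}(\lambdab^{\mathrm{bip}}) - \partial^{(\mathrm{ord}(\lambdab^{\mathrm{bip}}) + 2i)}(\lambdab^{\mathrm{bip}}) = e_i$ for every $i \geq 1$, which is exactly the asserted isomorphism. The main obstacle is verifying the freeness of $\ks^{\mathrm{bip}} \bmod p^k$ uniformly in $k$ from the Heegner-point construction of $\kappa^{\mathrm{bip}}_n$ on the level-raised Shimura curves $X^{nN^-}(N^+)$, alongside the clean stabilization of $\delta^{(k)}$; both ultimately rest on Condition~CR and residual irreducibility.
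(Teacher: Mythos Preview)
Your approach is essentially the same as the paper's: reduce modulo $p^k$, apply Theorem~\ref{thm:structure-pk-bipartite} at each finite level, and pass to the limit following the Mazur--Rubin template. The paper's limit argument is organized around showing $\partial^{(2s)}(\lambdab^{\mathrm{bip}}) = \lim_{k\to\infty}\partial^{(2s)}(\lambdab^{\mathrm{bip},(k)})$ directly rather than first stabilizing $\delta^{(k)}$, but the content is the same.

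The one technical point where you diverge from the paper is the freeness of the mod $p^k$ bipartite Euler system. Your justification (``any non-zero Heegner-point class generates a free cyclic submodule'') is not what is needed: freeness requires, for \emph{every} $n\in\mathcal{N}^{\mathrm{ind}}$, a free rank-one $R$-submodule $C_n\subseteq\mathrm{Sel}_{\mathcal{F}(n)}(K,T/p^kT)$ containing $\kappa^{\mathrm{bip}}_n$, including when $\kappa^{\mathrm{bip}}_n=0$. The paper handles this by restricting to $n\in\mathcal{N}^{\mathrm{adm}}_j$ with $j\geq 2k$ and invoking \cite[Lemma~3.3.6]{howard-bipartite}, which produces the required free submodule from the extra depth of admissibility; in the final limit one simply takes $j=2k$. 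So the obstacle you flag at the end is resolved not by Condition~CR and residual irreducibility alone but by this $j\geq 2k$ device, and you should replace $\mathcal{P}^{\mathrm{adm}}_k$ by $\mathcal{P}^{\mathrm{adm}}_j$ accordingly.
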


\begin{proof}
We closely follow \cite[Theorem 5.2.12]{mazur-rubin-book}.
Write
$$\mathrm{Sel}(K, E[p^\infty]) \simeq (\mathbb{Q}_p/\mathbb{Z}_p)^{\oplus r} \oplus \bigoplus_{i \geq r+1} \left( \mathbb{Z}/p^{ e_i}\mathbb{Z} \right)^{\oplus 2}$$
with $e_{r+1} \geq e_{r+2} \geq \cdots$. Let $e_1 = \cdots e_r = \infty$.
For $k > 0$, we have 
$$\mathrm{Sel}(K, E[p^k]) \simeq (\mathbb{Z}/p^{k}\mathbb{Z})^{\oplus r} \oplus \bigoplus_{i \geq r+1} \left( \mathbb{Z}/p^{ \mathrm{min} \lbrace k, e_i \rbrace }\mathbb{Z} \right)^{\oplus 2} .$$
By applying Theorem \ref{thm:structure-pk-bipartite}, we have
\begin{equation} \label{eqn:structure-bipartite}
\partial^{(2s)} (\lambdab^{\mathrm{bip}, (k)}) = \mathrm{min} \left\lbrace 
k, \delta + \sum_{i\geq s+1} \mathrm{min} \lbrace k, e_i \rbrace \right\rbrace 
\end{equation}
for every $s \geq 0$.

Fix $s  \geq 0$, and let 
$$h = \partial^{(2s)} (\lambdab^{\mathrm{bip}}) = \mathrm{min} \left\lbrace \mathrm{ind}(\lambda^{\mathrm{bip}}_n, \mathbb{Z}_p/I^{\mathrm{adm}}_n\mathbb{Z}_p) : n \in \mathcal{N}^{\mathrm{def}}_1 \textrm{ with } \nu(n) = 2s \right\rbrace.$$
Then there exists $n \in \mathcal{N}^{\mathrm{def}}_{1}$ (necessarily in $\mathcal{N}^{\mathrm{def}}_{h+1}$) with $\nu(n) = 2s$ such that
$\lambda^{\mathrm{bip}}_n \not\in p^{h+1}\mathbb{Z}_p/I^{\mathrm{adm}}_n \mathbb{Z}_p$, equivalently $\lambda^{\mathrm{bip},(h+1)}_n \neq 0 \in \mathbb{Z}_p/p^{h+1}\mathbb{Z}_p$.

Since the pair $(\lambdab^{\mathrm{bip}, (h+1)} , \ks^{\mathrm{bip}, (h+1)})$ is a free bipartite Euler system for $(T/p^{h+1}T, \mathcal{F}_{\mathrm{cl}}, \mathcal{P}^{\mathrm{adm}}_{h+1})$ again,
Theorem \ref{thm:stub-submodulues-bipartite} and Definition \ref{defn:stub-submodules} imply that
$\partial^{(2s)} (\lambdab^{\mathrm{bip}, (h+1)}) \leq h$. By (\ref{eqn:structure-bipartite}), it follows that
$$\partial^{(2s)} (\lambdab^{\mathrm{bip}, (k)}) \leq h = \partial^{(2s)} (\lambdab^{\mathrm{bip}})$$
for every $k \geq h+1$.
On the other hand, since 
$\lambda^{\mathrm{bip}}_n \in p^{h}\mathbb{Z}_p/I^{\mathrm{adm}}_n \mathbb{Z}_p$ for every $n \in \mathcal{N}^{\mathrm{def}}_{1}$ with $\nu(n) = 2 s$,
we have
$$\partial^{(2s)} (\lambdab^{\mathrm{bip}, (k),j}) \geq h = \partial^{(2s)} (\lambdab^{\mathrm{bip}})$$
for every $k \geq h$ with $j \geq 2k$.
Thus, we have
$\partial^{(2s)} (\lambdab^{\mathrm{bip}}) = \mathrm{sup} \left\lbrace \partial^{(2s)} (\lambdab^{\mathrm{bip}, (k),j}) : k \geq 1 \textrm{ with } j \geq 2k \right\rbrace$.
Also, $\partial^{(2s)} (\lambdab^{\mathrm{bip}, (k),j})$ is a non-decreasing function on $k$ with $j \geq 2k$ by (\ref{eqn:structure-bipartite}).
To sum up, we have
$$\partial^{(2s)} (\lambdab^{\mathrm{bip}}) = \lim_{k \to \infty} \partial^{(2s)} (\lambdab^{\mathrm{bip}, (k),j}) .$$
In the limit process, we may choose $j = 2k$ for every $k$.
The conclusion follows from this limit and (\ref{eqn:structure-bipartite}).
\end{proof}
The restriction of  $\mathcal{P}^{\mathrm{adm}}_k$ to $\mathcal{P}^{\mathrm{adm}}_j$ with $j \geq 2k$ yields a free bipartite Euler system for $T/p^kT$ for every $k \geq 1$.
\begin{lem} \label{lem:restriction-free}
For $j \geq 2k$, the pair
$(\lambdab^{\mathrm{bip}, (k),j}, \ks^{\mathrm{bip}, (k),j})$ forms a free bipartite Euler system for $(T/p^kT, \mathcal{F}_{\mathrm{cl}}, \mathcal{P}^{\mathrm{adm}}_j)$.
\end{lem}
\begin{proof}
See \cite[Lemma 3.3.6]{howard-bipartite}.
\end{proof}
By Lemma \ref{lem:restriction-free}, the following mod $p^k$ version follows easily from Theorem \ref{thm:structure-pk-bipartite}.
\begin{thm} \label{thm:structure-bipartite-non-free}
Let $k \geq 1$ be an integer, and j be an integer with $j \geq 2k$.
If  $\lambdab^{\mathrm{bip}, (k), j} \neq 0$, then we have isomorphism
\begin{align*}
& \mathrm{Sel}(K, E[p^k]) \\
& \simeq (\mathbb{Z}/p^k\mathbb{Z})^{\oplus \mathrm{ord}(\lambdab^{\mathrm{bip}, (k), j})} \oplus \bigoplus_{i \geq 1} \left( \mathbb{Z}/p^{ \partial^{(\mathrm{ord}(\lambdab^{\mathrm{bip}, (k), j}) + 2(i-1))} (\lambdab^{\mathrm{bip}, (k), j}) - \partial^{(\mathrm{ord}(\lambdab^{\mathrm{bip}, (k), j}) + 2i)} (\lambdab^{\mathrm{bip}, (k), j}) }\mathbb{Z} \right)^{\oplus 2} 
\end{align*}
where $\mathrm{ord}(\lambdab^{\mathrm{bip}, (k), j})  = \mathrm{min} \left\lbrace \nu(n) : n \in \mathcal{N}^{\mathrm{def}}_j , \lambda^{\mathrm{bip}, (k)}_n \neq 0 \right\rbrace$ and
\begin{align*}
k & > \partial^{(\mathrm{ord}(\lambdab^{\mathrm{bip}, (k), j})} (\lambdab^{\mathrm{bip}, (k), j}) - \partial^{(\mathrm{ord}(\lambdab^{\mathrm{bip}, (k), j}) + 2)} (\lambdab^{\mathrm{bip}, (k), j}) \\
& \geq \partial^{(\mathrm{ord}(\lambdab^{\mathrm{bip}, (k), j}) + 2)} (\lambdab^{\mathrm{bip}, (k), j}) - \partial^{(\mathrm{ord}(\lambdab^{\mathrm{bip}, (k), j}) + 4)} (\lambdab^{\mathrm{bip}, (k), j}) \\
& \geq \cdots .
\end{align*}
\end{thm}
\begin{rem}
\begin{enumerate}
\item Lemma \ref{lem:restriction-free} corresponds to the \emph{sufficiently liftable} Kolyvagin systems in \cite[Theorem 4.4.3]{mazur-rubin-book}. 
The standard Kolyvagin system argument does not require such a restriction if the core rank is one \cite[Theorem 4.4.1]{mazur-rubin-book}.
We expect that a similar result should exist for bipartite Euler systems and will investigate this aspect in a near future.
\item 
Since $\lambdab^{\mathrm{bip}}$ does not admit the action of complex conjugation (cf. \cite[$\S$6.1]{wei-zhang-mazur-tate}), we do not expect to observe the structure of each $\mathrm{Sel}(K, E[p^\infty])^{\pm}$ from $\lambdab^{\mathrm{bip}}$.
However, the corank of each $\mathrm{Sel}(K, E[p^\infty])^{\pm}$ can be still detected by considering the different variation of $\lambda^{\mathrm{bip}}_1$ along anticyclotomic Kolyvagin primes \cite{sweeting-kolyvagin}.
\end{enumerate}
\end{rem}

\section{Structural comparisons}
\subsection{Decomposition of Selmer groups}
Let $K$ be an imaginary quadratic field and $c \in \mathrm{Gal}(K/\mathbb{Q})$ the complex conjugation.
Since $\mathrm{Gal}(K/\mathbb{Q})$ has order 2 and $p > 3$ is odd, we have decomposition
\begin{equation} \label{eqn:decomposition}
\mathrm{Sel}(K, E[p^\infty]) = \mathrm{Sel}(K, E[p^\infty])^+ \oplus \mathrm{Sel}(K, E[p^\infty])^-
\end{equation}
where $\mathrm{Sel}(K, E[p^\infty])^\pm$ is the $c$-eigenspace with eigenvalue $\pm 1$, respectively.
Then we have isomorphisms
\begin{equation} \label{eqn:decomposition-sign}
\xymatrix{
\mathrm{Sel}(K, E[p^\infty])^+  \simeq \mathrm{Sel}(\mathbb{Q}, E[p^\infty]) , &
\mathrm{Sel}(K, E[p^\infty])^-  \simeq \mathrm{Sel}(\mathbb{Q}, E^K[p^\infty]) .
}
\end{equation}

\subsection{The structural Gross--Zagier formula} \label{subsec:structural-gross-zagier}
Suppose that $\nu(N^-)$ is even.
Recall (\ref{eqn:structure-selmer-before})
$$
\mathrm{Sel}(K, E[p^\infty])^{\pm}_{/\mathrm{div}} \simeq \bigoplus_{i \geq 1} \left( \mathbb{Z} / p^{a^{\pm}_i} \mathbb{Z} \right)^{\oplus 2}
$$
where $a^{\pm}_1 \geq  a^{\pm}_2 \geq  \cdots$.
\subsubsection{When $r^+(E/K) > r^-(E/K)$} \label{subsubsec:when-r+-r-}
We first assume that $W(E/\mathbb{Q}) \cdot (-1)^{\mathrm{ord}(\ks^{\mathrm{Heeg}}) +1} = +1$.
Under the working hypotheses in $\S$\ref{subsec:working-hypotheses}, by using Theorem \ref{thm:kolyvagin-vanishing-order}, we have
\begin{itemize}
\item $r^{+} (E/K)= \mathrm{ord}(\ks^{\mathrm{Heeg}}) +1$,
\item $\mathrm{ord}(\ks^{\mathrm{Heeg}}) - r^{-} (E/K) \geq 0$ and is even, and
\item $r^+(E/K) > r^-(E/K)$.
\end{itemize}
Applying Theorem \ref{thm:structure-kolyvagin}, we also have
\begin{align*}
a^{+}_i & = \partial^{(\mathrm{ord}(\ks^{\mathrm{Heeg}}) + 2i -1)} (\ks^{\mathrm{Heeg}}) - \partial^{(\mathrm{ord}(\ks^{\mathrm{Heeg}}) + 2i)} (\ks^{\mathrm{Heeg}} ) , \\
a^{-}_{i + (  \mathrm{ord}(\ks^{\mathrm{Heeg}}) - r^{ -}(E/K) )} & = \partial^{(\mathrm{ord}(\ks^{\mathrm{Heeg}}) + 2i -2)} (\ks^{\mathrm{Heeg}}) - \partial^{(\mathrm{ord}(\ks^{\mathrm{Heeg}}) + 2i-1)} (\ks^{\mathrm{Heeg}} )
\end{align*}
for $i \geq 1$.
In particular, (\ref{eqn:decomposition-sign}) implies
$$\mathrm{ord}(\kn(E))  = \mathrm{ord}(\ks^{\mathrm{Heeg}}) +1 ,$$
and the comparison between Theorem \ref{thm:structure-kn} and Theorem \ref{thm:structure-kolyvagin} via (\ref{eqn:decomposition}) implies 
\begin{align*}
& \partial^{(\mathrm{ord}(\kn(E)) + 2(i-1))} (\widetilde{\boldsymbol{\delta}}(E) ) - \partial^{(\mathrm{ord}(\kn(E)) + 2i)} (\widetilde{\boldsymbol{\delta}}(E) )  \\
& =
2  \cdot \left(\partial^{(\mathrm{ord}(\ks^{\mathrm{Heeg}}) + 2i -1)} (\ks^{\mathrm{Heeg}}) - \partial^{(\mathrm{ord}(\ks^{\mathrm{Heeg}}) + 2i)} (\ks^{\mathrm{Heeg}} ) \right)
\end{align*}
for $i \geq 1$.
If we further assume that $\mathrm{ord}(\ks^{\mathrm{Heeg}}) = r^{ -}(E/K) ) = \mathrm{ord}(\kn(E^K))$, we have equality
\begin{align*}
& \partial^{(\mathrm{ord}(\kn(E^K)) + 2(i-1))} (\widetilde{\boldsymbol{\delta}}(E^K) ) - \partial^{(\mathrm{ord}(\kn(E^K)) + 2i)} (\widetilde{\boldsymbol{\delta}}(E^K) )  \\
& =
2 \cdot \left(\partial^{(\mathrm{ord}(\ks^{\mathrm{Heeg}}) + 2i -2)} (\ks^{\mathrm{Heeg}}) - \partial^{(\mathrm{ord}(\ks^{\mathrm{Heeg}}) + 2i-1)} (\ks^{\mathrm{Heeg}} ) \right) 
\end{align*}
for $i \geq 1$.
The collection of all the above formulas can be regarded as a structural refinement of Gross--Zagier formula.
Theorem \ref{thm:main-higher-gross-zagier-easy} follows from these formulas and (\ref{eqn:corank-difference}).
\begin{rem} \label{rem:structural-perrin-riou}
Since $\kn(E)$ and $\kn(E^K)$ are the images of $ \ks^{\mathrm{Kato}}(E) $ and $ \ks^{\mathrm{Kato}}(E^K) $ under the dual exponential maps (as explained in Remark \ref{rem:main-higher-gross-zagier-easy}), respectively,  these formulas can be understood as a structural refinement of Perrin-Riou's conjecture.
\end{rem}

\subsubsection{When $r^-(E/K) > r^+(E/K)$}
We now assume that $W(E/\mathbb{Q}) \cdot (-1)^{\mathrm{ord}(\ks^{\mathrm{Heeg}}) +1} = -1$.
Applying the argument in $\S$\ref{subsubsec:when-r+-r-}, we obtain the exactly same result except the changes of sign and the role of $E$ and $E^K$.

\subsection{The structural Waldspurger formulas} \label{subsec:structural-waldspurger}
Suppose that $\nu(N^-)$ is odd.
Under the working hypotheses in $\S$\ref{subsec:working-hypotheses}, 
the comparison between Theorem \ref{thm:structure-kn} and Theorem \ref{thm:structure-bipartite} via (\ref{eqn:decomposition}) implies isomorphism
{ \scriptsize
\begin{align} \label{eqn:structural-waldspurger}
\begin{split}
& (\mathbb{Q}_p/\mathbb{Z}_p)^{\oplus \mathrm{ord}(\lambdab^{\mathrm{bip}})} \oplus \bigoplus_{i \geq 1} \left( \mathbb{Z}/p^{ \partial^{(\mathrm{ord}(\lambdab^{\mathrm{bip}}) + 2(i-1))} (\lambdab^{\mathrm{bip}}) - \partial^{(\mathrm{ord}(\lambdab^{\mathrm{bip}}) + 2i)} (\lambdab^{\mathrm{bip}}) }\mathbb{Z} \right)^{\oplus 2} \\
  \simeq \ &  \left( \mathbb{Q}_p/\mathbb{Z}_p \right)^{\oplus \mathrm{ord}(\kn(E))} \oplus \bigoplus_{j\geq 1} \left( \mathbb{Z}/p^{(  \partial^{(\mathrm{ord}(\kn(E)) + 2(j-1))} (\widetilde{\boldsymbol{\delta}}(E) ) - \partial^{(\mathrm{ord}(\kn(E)) + 2j)} (\widetilde{\boldsymbol{\delta}}(E) ) )/2}\mathbb{Z} \right)^{\oplus 2}  \\
&  \oplus
 \left( \mathbb{Q}_p/\mathbb{Z}_p \right)^{\oplus \mathrm{ord}(\kn)(E^K)} \oplus \bigoplus_{k\geq 1} \left( \mathbb{Z}/p^{(  \partial^{(\mathrm{ord}(\kn(E^K)) + 2(k-1))} (\widetilde{\boldsymbol{\delta}}(E^K) ) - \partial^{(\mathrm{ord}(\kn(E^K)) + 2k)} (\widetilde{\boldsymbol{\delta}}(E^K) ) )/2}\mathbb{Z} \right)^{\oplus 2}  ,
\end{split}
\end{align}
}
and the isomorphism itself can be regarded as a structural Waldspurger formula.
Theorem \ref{thm:main-higher-waldspurger-easy} immediately follows from  (\ref{eqn:structural-waldspurger}).

Let $k \geq 1$ be an integer and $j$ be also an integer with $j \geq 2k$.
We now assume that $\lambdab^{\mathrm{bip},(k),j}$ is non-trivial. Then we have the following formula
{ \scriptsize
\begin{align} \label{eqn:structural-waldspurger-mod-p-k}
\begin{split}
& (\mathbb{Z}/p^k\mathbb{Z})^{\oplus \mathrm{ord}(\lambdab^{\mathrm{bip},(k),j})} \oplus \bigoplus_{i_1 \geq 1} \left( \mathbb{Z}/p^{ \partial^{(\mathrm{ord}(\lambdab^{\mathrm{bip}, (k), j}) + 2(i_1-1))} (\lambdab^{\mathrm{bip}, (k), j}) - \partial^{(\mathrm{ord}(\lambdab^{\mathrm{bip}, (k), j}) + 2i_1)} (\lambdab^{\mathrm{bip}, (k), j}) }\mathbb{Z} \right)^{\oplus 2} \\
  \simeq \ &  \left( \mathbb{Z}/p^k\mathbb{Z} \right)^{\oplus \mathrm{ord}(\kn^{(k)}(E))} \oplus \bigoplus_{i_2\geq 1} \left( \mathbb{Z}/p^{(  \partial^{(\mathrm{ord}(\kn^{(k)}(E)) + 2(i_2-1))} (\kn^{(k)}(E) ) - \partial^{(\mathrm{ord}(\kn^{(k)}(E)) + 2i_2)} (\kn^{(k)}(E) ) )/2}\mathbb{Z} \right)^{\oplus 2}  \\
&  \oplus
 \left( \mathbb{Z}/p^k\mathbb{Z} \right)^{\oplus \mathrm{ord}(\kn^{(k)})(E^K)} \oplus \bigoplus_{i_3\geq 1} \left( \mathbb{Z}/p^{(  \partial^{(\mathrm{ord}(\kn^{(k)}(E^K)) + 2(i_3-1))} (\widetilde{\boldsymbol{\delta}}^{(k)}(E^K) ) - \partial^{(\mathrm{ord}(\kn^{(k)}(E^K)) + 2i_3)} (\kn^{(k)}(E^K) ) )/2}\mathbb{Z} \right)^{\oplus 2}  ,
\end{split}
\end{align}
}
If we further assume that
 $\kn^{(k)}(E)$ and $\kn^{(k)}(E^K)$ are non-zero,
then we have
{ \scriptsize
\begin{align} \label{eqn:main-higher-waldspurger-easy-mod-p-k}
\begin{split}
& \mathrm{ord} ( \lambdab^{\mathrm{bip},(k),j} ) =  \mathrm{ord} (\kn^{(k)}(E)) + \mathrm{ord} (\kn^{(k)}(E^K)) = \mathrm{cork}_{\mathbb{Z}/p^k\mathbb{Z}} \mathrm{Sel}(K, E[p^k]) , \\
& 2 \cdot \left( \partial^{( \mathrm{ord}( \lambdab^{\mathrm{bip},(k),j} ) )} ( \lambdab^{\mathrm{bip},(k),j} )
- \partial^{( \infty )} ( \lambdab^{\mathrm{bip},(k),j} ) \right) \\
& = 
\partial^{( \mathrm{ord} (\kn^{(k)}(E)) )} ( \kn^{(k)}(E) )
- \partial^{( \infty )} ( \kn^{(k)}(E) ) +
\partial^{( \mathrm{ord} (\kn^{(k)}(E^K)) )} ( \kn^{(k)}(E^K) )
- \partial^{( \infty )} ( \kn^{(k)}(E^K) )   .
\end{split}
\end{align}
}

\section{The non-triviality of Kolyvagin systems and the main conjectures} \label{sec:non-triviality-questions}
In this section, we investigate all the non-triviality questions mentioned in  \S\ref{subsec:non-triviality} following the strategy in \cite{kim-structure-selmer}.
More precisely, we relate  each non-triviality question to the localization of the corresponding  main conjecture at the augmentation ideal.

\subsection{The non-triviality of $\kn$}
We keep all the settings in $\S$\ref{subsec:working-hypotheses}.
Let $\mathbb{Q}_{\infty}$ be the cyclotomic $\mathbb{Z}_p$-extension of $\mathbb{Q}$ and $\Lambda^{\mathrm{cyc}} = \mathbb{Z}_p\llbracket \mathrm{Gal}(\mathbb{Q}_\infty/\mathbb{Q}) \rrbracket \simeq \mathbb{Z}_p\llbracket X \rrbracket$ the cyclotomic iwasawa algebra.

Let
$\mathrm{H}^1_{\mathrm{Iw}}(\mathbb{Q}, T) = \varprojlim_m \mathrm{H}^1(\mathbb{Q}_m, T)$ be the first Iwasawa cohomology group of $T$ over $\mathbb{Q}_{\infty}$ where $\mathbb{Q}_m$ is the cyclic subextension of $\mathbb{Q}$ of degree $p^m$ in $\mathbb{Q}_\infty$.
Let $\ks^{\mathrm{Kato}, \infty}$ be the $\Lambda^{\mathrm{cyc}}$-adic Kato's Kolyvagin system following the convention given in \cite{kim-structure-selmer}, and we have $\kappa^{\mathrm{Kato}, \infty}_1 \in \mathrm{H}^1_{\mathrm{Iw}}(\mathbb{Q}, T)$.
Denote by $\mathrm{Sel}_0(\mathbb{Q}_{\infty}, E[p^\infty] )$ the $p$-strict Selmer group of $E[p^\infty]$ over $\mathbb{Q}_\infty$.
Write $(-)^\vee = \mathrm{Hom}(-, \mathbb{Q}_p/\mathbb{Z}_p)$. 

In \cite{kim-structure-selmer}, the following statement is proved.
\begin{thm} \label{thm:non-triviality-kn}
Let $E$ be an elliptic curve over $\mathbb{Q}$ and $p \geq 5$ a prime such that $\overline{\rho}$ is surjective.
Let $\mathfrak{P}$ be a height one prime ideal of $\Lambda^{\mathrm{cyc}}$.
The following statements are equivalent.
\begin{enumerate}
\item $\ks^{\mathrm{Kato}, \infty}$ does not vanish modulo $\mathfrak{P}$.
\item The localization of  the  Iwasawa main conjecture  at $\mathfrak{P}$ holds; in other words,
$$
\mathrm{ord}_{\mathfrak{P}} \left( \mathrm{char}_{\Lambda^{\mathrm{cyc}}} \left( \dfrac{\mathrm{H}^1_{\mathrm{Iw}}(\mathbb{Q}, T)}{\Lambda \kappa^{\mathrm{Kato}, \infty}_1 }  \right) \right)
=
\mathrm{ord}_{\mathfrak{P}} \left( \mathrm{char}_{\Lambda^{\mathrm{cyc}}} \left( \mathrm{Sel}_0(\mathbb{Q}_{\infty}, E[p^\infty] )^\vee  \right) \right) .
$$
\end{enumerate}
\end{thm}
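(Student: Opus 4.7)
The plan is to play Kato's unconditional divisibility in the Iwasawa main conjecture against a $\Lambda^{\mathrm{cyc}}$-adic Kolyvagin-system bound, keeping careful track of $\mathfrak{P}$-adic valuations throughout. Kato's Euler system argument yields, unconditionally, the one-sided inequality
\[
\mathrm{ord}_{\mathfrak{P}} \bigl( \mathrm{char}_{\Lambda^{\mathrm{cyc}}} ( \mathrm{Sel}_0(\mathbb{Q}_{\infty}, E[p^\infty] )^\vee ) \bigr) \leq \mathrm{ord}_{\mathfrak{P}} \bigl( \mathrm{char}_{\Lambda^{\mathrm{cyc}}} ( \mathrm{H}^1_{\mathrm{Iw}}(\mathbb{Q}, T) / \Lambda^{\mathrm{cyc}} \kappa^{\mathrm{Kato}, \infty}_1 ) \bigr),
\]
so condition (2) is equivalent to the reverse inequality at $\mathfrak{P}$. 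The task is therefore to identify this reverse inequality with the non-vanishing of $\ks^{\mathrm{Kato}, \infty}$ modulo $\mathfrak{P}$.

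For the direction $(1) \Rightarrow (2)$ I would run Mazur--Rubin's Kolyvagin-system bound after localizing everything at $\mathfrak{P}$. Starting from an element $c \in \mathrm{Sel}_0(\mathbb{Q}_\infty, E[p^\infty])^\vee$ and a class $\kappa_n^{\mathrm{Kato}, \infty}$ that survives modulo $\mathfrak{P}$, one uses Chebotarev over the localization $\Lambda^{\mathrm{cyc}}_{\mathfrak{P}}$, made possible by the surjectivity of $\overline{\rho}$, to iteratively choose Kolyvagin primes that trim the dual Selmer group while the chosen $\kappa_n^{\mathrm{Kato}, \infty}$ remains visible. After sufficiently many iterations this produces the upper bound on $\mathrm{ord}_\mathfrak{P}$ of $\mathrm{Sel}_0^\vee$ matching the right-hand side of Kato's divisibility, which together with the latter yields equality at $\mathfrak{P}$.

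For the direction $(2) \Rightarrow (1)$ I would argue by contrapositive. If $\ks^{\mathrm{Kato}, \infty}$ vanishes modulo $\mathfrak{P}$ in its entirety, then in particular the bottom class $\kappa^{\mathrm{Kato}, \infty}_1$ becomes divisible by a uniformizer of $\Lambda^{\mathrm{cyc}}_{\mathfrak{P}}$. Under the surjectivity of $\overline{\rho}$, Kato shows that $\mathrm{H}^1_{\mathrm{Iw}}(\mathbb{Q}, T)$ is $\Lambda^{\mathrm{cyc}}$-torsion-free of rank one, so this extra $\mathfrak{P}$-divisibility strictly raises $\mathrm{ord}_{\mathfrak{P}}$ of the quotient $\mathrm{H}^1_{\mathrm{Iw}}(\mathbb{Q}, T)/\Lambda^{\mathrm{cyc}} \kappa^{\mathrm{Kato}, \infty}_1$ while the left-hand side of the main conjecture is unaffected; hence equality at $\mathfrak{P}$ fails.

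The main obstacle is the localized Chebotarev step in $(1) \Rightarrow (2)$: one must ensure that the chosen Kolyvagin primes $\ell$ remain useful after passing to $\Lambda^{\mathrm{cyc}}_{\mathfrak{P}}$, and that the formal Kolyvagin-derivative relations, originally formulated over $\Lambda^{\mathrm{cyc}}/p^k$ for varying $k$, reassemble consistently into a statement over the one-dimensional local ring $\Lambda^{\mathrm{cyc}}_{\mathfrak{P}}$. Isolating the contribution of $\mathfrak{P}$ from the rest of the support of the relevant characteristic ideals, while only assuming non-triviality of a single member $\kappa_n^{\mathrm{Kato}, \infty}$ mod $\mathfrak{P}$ (rather than of the bottom class itself), is where the technical work should concentrate.
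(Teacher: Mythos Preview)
The paper does not prove this theorem here; it is quoted from \cite{kim-structure-selmer}. The method used there, and reproduced in this paper for the analogous Theorems~\ref{thm:non-triviality-heegner} and~\ref{thm:non-triviality-bipartite}, is different from yours: one perturbs $\mathfrak{P}=(g)$ to nearby height-one primes $\mathfrak{P}_M=(g+p^M)$ at which the bottom class is \emph{nonzero}, applies the structure theorem over $S_{\mathfrak{P}_M}$ to express the Selmer length as $\partial^{(0)}-\partial^{(\infty)}$ of the specialized Kolyvagin system, and then lets $M\to\infty$ using the congruence $\ks^{(\mathfrak{P})}\equiv\ks^{(\mathfrak{P}_M)}\pmod{p^M}$. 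The quantity $\partial^{(\infty)}$ is thereby identified with the defect between the two sides of the main conjecture at $\mathfrak{P}$, and the equivalence follows. This avoids working directly over the localization $\Lambda^{\mathrm{cyc}}_{\mathfrak{P}}$ and the attendant Chebotarev issue you flag.

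Your direction $(2)\Rightarrow(1)$ has a real gap. You note that if the whole system vanishes modulo $\mathfrak{P}$ then $\kappa_1^{\mathrm{Kato},\infty}$ is $\mathfrak{P}$-divisible, and then assert that this ``strictly raises'' $\mathrm{ord}_\mathfrak{P}$ of $\mathrm{H}^1_{\mathrm{Iw}}/\Lambda^{\mathrm{cyc}}\kappa_1^{\mathrm{Kato},\infty}$ while leaving the Selmer side fixed. Raises it relative to what? No baseline has been established; the mere $\mathfrak{P}$-divisibility of $\kappa_1$ says nothing about whether equality or strict inequality holds. The information you are throwing away is precisely that the \emph{entire} system, not just $\kappa_1$, is $\mathfrak{P}$-divisible. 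Dividing the whole system by a uniformizer produces another $\Lambda^{\mathrm{cyc}}$-adic Kolyvagin system $\ks'$ with bottom class $\kappa'_1=\kappa_1^{\mathrm{Kato},\infty}/\pi$, and the Kolyvagin-system bound (your own $(1)\Rightarrow(2)$, applied now to $\ks'$) gives
\[
\mathrm{ord}_\mathfrak{P}\bigl(\mathrm{char}(\mathrm{Sel}_0^\vee)\bigr)\ \leq\ \mathrm{ord}_\mathfrak{P}\bigl(\mathrm{char}(\mathrm{H}^1_{\mathrm{Iw}}/\Lambda^{\mathrm{cyc}}\kappa'_1)\bigr)\ =\ \mathrm{ord}_\mathfrak{P}\bigl(\mathrm{char}(\mathrm{H}^1_{\mathrm{Iw}}/\Lambda^{\mathrm{cyc}}\kappa_1^{\mathrm{Kato},\infty})\bigr)-1,
\]
which is the strict inequality you need. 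Equivalently, one can invoke the rank-one rigidity of the module of $\Lambda^{\mathrm{cyc}}$-adic Kolyvagin systems: writing $\ks^{\mathrm{Kato},\infty}=c\cdot\ks^{\mathrm{prim}}$ with $\ks^{\mathrm{prim}}$ primitive, the defect at $\mathfrak{P}$ is exactly $\mathrm{ord}_\mathfrak{P}(c)$. Either way, the argument must use more than the divisibility of $\kappa_1$ alone.
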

By applying Theorem \ref{thm:non-triviality-kn} to $\mathfrak{P} = X\Lambda^{\mathrm{cyc}}$, we easily obtain the following statement.
\begin{cor} \label{cor:non-triviality-kn}
We keep all the settings in $\S$\ref{subsec:working-hypotheses}.
Then 
the Iwasawa main conjecture localized at $X\Lambda^{\mathrm{cyc}}$ holds if and only if $\kn$ is non-zero.
\end{cor}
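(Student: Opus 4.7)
The strategy is to specialize Theorem \ref{thm:non-triviality-kn} to the height one prime $\mathfrak{P} = X\Lambda^{\mathrm{cyc}}$, which is exactly the prime ideal corresponding to the trivial character of $\mathrm{Gal}(\mathbb{Q}_\infty/\mathbb{Q})$. The hypothesis of the corollary says precisely that the Iwasawa main conjecture localizes correctly at this $\mathfrak{P}$. Hence, by the equivalence of Theorem \ref{thm:non-triviality-kn}, the $\Lambda^{\mathrm{cyc}}$-adic Kato Kolyvagin system $\ks^{\mathrm{Kato},\infty}$ does not vanish modulo $X\Lambda^{\mathrm{cyc}}$.

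The next step is to identify the reduction of $\ks^{\mathrm{Kato},\infty}$ modulo $X\Lambda^{\mathrm{cyc}}$ with the base-field Kato Kolyvagin system $\ks^{\mathrm{Kato}}$ for $T$ over $\mathbb{Q}$. Concretely, I would argue that the control theorem of Iwasawa cohomology groups applied at the trivial character gives a compatible map $\mathrm{H}^1_{\mathrm{Iw}}(\mathbb{Q}, T)/X \to \mathrm{H}^1(\mathbb{Q}, T)$ that carries the $\Lambda$-adic Kolyvagin classes $\kappa^{\mathrm{Kato},\infty}_n$ to $\kappa^{\mathrm{Kato}}_n$, compatibly with the defining relations. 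Consequently, the non-vanishing of $\ks^{\mathrm{Kato},\infty} \bmod X\Lambda^{\mathrm{cyc}}$ forces the non-triviality of $\ks^{\mathrm{Kato}}$ as a Kolyvagin system for $(T, \mathcal{F}_{\mathrm{cl}}, \mathcal{N}^{\mathrm{cyc}}_1)$.

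For the second assertion, I would invoke the explicit relationship between Kato's Kolyvagin system and Kurihara numbers. As recalled in Remark \ref{rem:main-higher-gross-zagier-easy}(4), $\kn(E)$ is the image of $\ks^{\mathrm{Kato}}(E)$ under the dual exponential map, applied termwise at each $n \in \mathcal{N}^{\mathrm{cyc}}_1$. Therefore, once $\ks^{\mathrm{Kato}}$ is known to be non-trivial, the non-vanishing of $\kn$ follows from the injectivity (in the relevant quotients $\mathbb{Z}_p/I^{\mathrm{cyc}}_n\mathbb{Z}_p$) of the dual exponential on the ordinary part of the local cohomology at primes in $\mathcal{P}^{\mathrm{cyc}}_1$.

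The main obstacle, in my view, is the bookkeeping in the second paragraph: verifying with care that reduction of the $\Lambda^{\mathrm{cyc}}$-adic Kolyvagin system at the prime $X\Lambda^{\mathrm{cyc}}$ yields exactly $\ks^{\mathrm{Kato}}$ (and not merely a Kolyvagin system whose zeroth class vanishes). This depends on the precise normalization of $\ks^{\mathrm{Kato},\infty}$ used in \cite{kim-structure-selmer} and on the compatibility of the Kolyvagin derivative operators with the restriction map $\mathbb{Q}_\infty \to \mathbb{Q}$ in the trivial-character direction. Once this compatibility is in place, both assertions follow formally from Theorem \ref{thm:non-triviality-kn}.
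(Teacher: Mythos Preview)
Your overall architecture matches the paper's: apply Theorem \ref{thm:non-triviality-kn} at $\mathfrak{P}=X\Lambda^{\mathrm{cyc}}$, then descend to $\ks^{\mathrm{Kato}}$, then pass to $\kn$. The descent step in your second paragraph is fine in spirit, and the paper in fact does not even spell it out.

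The genuine gap is in your third paragraph. The dual exponential map is at $p$, not at the Kolyvagin primes in $\mathcal{P}^{\mathrm{cyc}}_1$; there is no ``dual exponential on the ordinary part of the local cohomology at primes in $\mathcal{P}^{\mathrm{cyc}}_1$''. More importantly, even with the map placed correctly, appealing to injectivity does not prove what you need. The map $\ks^{\mathrm{Kato}}\mapsto \kn$ factors through localization at $p$, and a non-zero global Kolyvagin class can very well have vanishing image in $\mathrm{H}^1(\mathbb{Q}_p,T/I^{\mathrm{cyc}}_nT)$ (i.e.\ lie in the $p$-strict Selmer group). So ``$\ks^{\mathrm{Kato}}$ non-trivial $\Rightarrow$ $\kn\neq 0$'' does not follow from any injectivity statement for the dual exponential.

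The paper closes this gap differently. It introduces the correction exponent $t=\mathrm{length}_{\mathbb{Z}_p}E(\mathbb{Q}_p)[p^\infty]$ (set to $0$ in the split multiplicative case) and observes that $p^t\cdot\kn$ is the image of $\ks^{\mathrm{Kato}}$ under the dual exponential; hence triviality of $\ks^{\mathrm{Kato}}$ forces $p^t\cdot\kn=0$, and therefore $\kn=0$. For the direction you actually need, the paper argues by contrapositive: if $p^t\cdot\kn\equiv 0$, then every class $\kappa^{\mathrm{Kato}}_n$ is locally trivial at $p$, so $\ks^{\mathrm{Kato}}$ is in fact a Kolyvagin system for the $p$-strict Selmer structure; but that structure has core rank zero, so every such Kolyvagin system vanishes. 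This core-rank-zero step is the missing idea in your proposal; replace your injectivity claim with it and the argument goes through.
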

\begin{proof}
We give a sketch only here, and we refer to \cite{kim-structure-selmer} for details.
Let $$t = 
\left\lbrace
\begin{array}{ll}
 0 & \textrm{ if  $E$ has split multiplicative reduction at $p$,} \\
 \mathrm{length}_{\mathbb{Z}_p} \left( E(\mathbb{Q}_p)[p^\infty] \right)  & \textrm{ otherwise.}
\end{array} \right.$$
 and $p^t \cdot \kn = \left\lbrace p^t \cdot \widetilde{\delta}_n \in \mathbb{Z}_p/I^{\mathrm{cyc}}_n\mathbb{Z}_p \right\rbrace_{ n \in \mathcal{N}^{\mathrm{cyc}}_1 }$.
Then it is easy to see that $p^t \cdot \kn$ is non-zero if and only if $\kn$ is non-zero.
It suffices to show that $\ks^{\mathrm{Kato}}$ is non-trivial if and only if $p^t \cdot \kn$ is non-zero under our working hypotheses.
If $p^t \cdot \kn$ is identically zero, then $\ks^{\mathrm{Kato}}$ becomes a Kolyvagin system of core rank zero, so it is trivial.
If $\ks^{\mathrm{Kato}}$ is trivial, then $p^t \cdot \kn$ is identically zero since $p^t \cdot \kn$ occurs as the image of $\ks^{\mathrm{Kato}}$ under the dual exponential map.
\end{proof}
In particular, the Iwasawa main conjecture inverting $p$ is strong enough to obtain the non-triviality of $\kn$.
\subsection{The non-triviality of $\ks^{\mathrm{Heeg}}$}
We keep all the settings in $\S$\ref{subsec:working-hypotheses} with even $\nu(N^-)$.

Let $K_{\infty}$ be the anticyclotomic $\mathbb{Z}_p$-extension of $K$ and $\Lambda^{\mathrm{ac}} = \mathbb{Z}_p\llbracket \mathrm{Gal}(K_\infty/K) \rrbracket \simeq \mathbb{Z}_p\llbracket X \rrbracket$ the anticyclotomic iwasawa algebra.
Denote by $K_m$ the cyclic subextension of $K$ of degree $p^m$ in $K_\infty$. 
Let $\alpha$ be the unit root of $X^2 - a_p(E)X +p$ and $\kappa^{\mathrm{Heeg}, \alpha, \infty}_1$ the $p$-stabilized Heegner point over $K_\infty$ with $U_p$-eigenvalue $\alpha$ \cite{howard-kolyvagin}.
It is known that  $\kappa^{\mathrm{Heeg}, \alpha, \infty}_1 \neq 0$ thanks to the work of Cornut and Vatsal \cite{cornut-higher-heegner-points, vatsal-duke}.

Although we do not go through the precise definitions of various Selmer structures here, all the details can be found in \cite{howard-kolyvagin}.
\begin{thm}[Howard] \label{thm:howard-divisibility}
We keep all the assumptions in $\S$\ref{subsec:working-hypotheses} with even $\nu(N^-)$ and 
 assume that $E$ has good ordinary reduction at $p$.
 Then 
 $\widehat{\mathrm{Sel}}(K_\infty, T) = \varprojlim_m \mathrm{Sel}(K_m, T)$ and $\mathrm{Sel}( K_\infty, E[p^\infty])^\vee$ 
are $\Lambda^{\mathrm{ac}}$-modules of rank one, and there exists a finitely generated torsion $\Lambda^{\mathrm{ac}}$-module $M_\infty$ such that
\begin{enumerate}
\item $\mathrm{Sel}( K_\infty, E[p^\infty])^\vee \sim \Lambda \oplus M_{\infty} \oplus M_{\infty}$,
\item $\mathrm{char}_{\Lambda^{\mathrm{ac}}} \left( M_{\infty} \right) = \mathrm{char}_{\Lambda^{\mathrm{ac}}} \left( M_{\infty} \right)^\iota$, and
\item $\mathrm{char}_{\Lambda^{\mathrm{ac}}} \left(  \dfrac{\widehat{\mathrm{Sel}}(K_\infty, T)}{ \kappa^{\mathrm{Heeg}, \alpha, \infty}_1 } \right) \subseteq \mathrm{char}_{\Lambda^{\mathrm{ac}}} \left( M_\infty \right)$
\end{enumerate}
where $\sim$ means a pseudo-isomorphism of $\Lambda^{\mathrm{ac}}$-modules, $(-)^\iota : \Lambda^{\mathrm{ac}} \to \Lambda^{\mathrm{ac}}$ is the involution defined by $\gamma \mapsto \gamma^{-1}$ for $ \gamma \in \mathrm{Gal}(K_\infty/K)$.
\end{thm}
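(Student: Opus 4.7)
The plan is to apply Mazur--Rubin-style Kolyvagin system machinery in core rank one, now over the anticyclotomic Iwasawa algebra $\Lambda^{\mathrm{ac}}$. The starting input is the non-triviality of the bottom class $\kappa^{\mathrm{Heeg},\alpha,\infty}_1 \in \widehat{\mathrm{Sel}}(K_\infty, T)$, which is the Cornut--Vatsal theorem cited just before the statement. A global Euler characteristic / Poitou--Tate calculation for the classical self-dual Selmer structure on $T$ over $K_\infty$, combined with the non-anomalous hypotheses (2)--(3) of Assumption \ref{assu:non-triviality-heegner} (which ensure the ordinary local condition at $p$ has generic corank zero on the divisible side), forces $\widehat{\mathrm{Sel}}(K_\infty, T)$ and $\mathrm{Sel}(K_\infty, E[p^\infty])^\vee$ to have $\Lambda^{\mathrm{ac}}$-rank exactly one. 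In particular, the bottom Heegner class generates a rank-one free $\Lambda^{\mathrm{ac}}$-submodule of $\widehat{\mathrm{Sel}}(K_\infty, T)$ up to pseudo-null error.

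Next I would assemble the full $\Lambda^{\mathrm{ac}}$-adic Heegner point Kolyvagin system $\{\kappa^{\mathrm{Heeg},\alpha,\infty}_n\}_{n \in \mathcal{N}^{\mathrm{ac}}_1}$ by the Kolyvagin derivative procedure applied to $p$-stabilized Heegner points along the ring class tower, and verify the reciprocity relations in the sense of \cite{howard-kolyvagin}. The surjectivity of $\overline{\rho}$ in hypothesis (a) restricts to a sufficiently large image on $G_K$ (using the hypothesis $D_K \neq -3$), which, together with Assumption \ref{assu:non-triviality-heegner}, guarantees that $T/\mathfrak{m}T$ is absolutely irreducible as a $G_K$-module and that the classical Selmer structure is cartesian. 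Applying the Mazur--Rubin structure theorem for Kolyvagin systems of core rank one then simultaneously delivers the pseudo-decomposition $\mathrm{Sel}(K_\infty, E[p^\infty])^\vee \sim \Lambda^{\mathrm{ac}} \oplus M_\infty \oplus M_\infty$ --- the doubled factor $M_\infty \oplus M_\infty$ being the $\Lambda^{\mathrm{ac}}$-adic incarnation of the even/odd phenomenon in Proposition \ref{prop:selmer-group-structure-even-odd}, forced by the self-duality of the Selmer structure --- together with the divisibility
\[
\mathrm{char}_{\Lambda^{\mathrm{ac}}}\!\left( \frac{\widehat{\mathrm{Sel}}(K_\infty, T)}{\Lambda^{\mathrm{ac}} \cdot \kappa^{\mathrm{Heeg},\alpha,\infty}_1} \right) \subseteq \mathrm{char}_{\Lambda^{\mathrm{ac}}}(M_\infty).
\]

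The symmetry $\mathrm{char}_{\Lambda^{\mathrm{ac}}}(M_\infty) = \mathrm{char}_{\Lambda^{\mathrm{ac}}}(M_\infty)^\iota$ will be extracted from the action of complex conjugation $c \in \mathrm{Gal}(K/\mathbb{Q})$: because $c$ acts on $\mathrm{Gal}(K_\infty/K)$ by inversion, it induces the involution $\iota$ on $\Lambda^{\mathrm{ac}}$, and the classical Selmer structure is $c$-stable (the ordinary local conditions at primes above $p$ are either swapped or preserved by $c$ according as $p$ splits or is inert). A Flach / Cassels--Tate-style pairing on the torsion part of $\mathrm{Sel}(K_\infty, E[p^\infty])^\vee$, sesquilinear with respect to $\iota$, upgrades to a perfect $\Lambda^{\mathrm{ac}}$-pairing $M_\infty \times M_\infty \to (\mathrm{Frac}\,\Lambda^{\mathrm{ac}})/\Lambda^{\mathrm{ac}}$ relative to $\iota$, from which the characteristic ideal symmetry follows.

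I expect the principal technical obstacle to lie in the Chebotarev ingredient in the $\Lambda^{\mathrm{ac}}$-adic setting --- the analogue of Lemma \ref{lem:chebotarev-bipartite} --- where one needs primes $\ell \in \mathcal{P}^{\mathrm{ac}}_k$ whose Frobenius detects any specified non-zero class uniformly in $k$. This requires controlling the image of Galois on $T/p^k T$ along the Iwasawa tower, and is precisely where surjectivity of $\overline{\rho}$ on $G_\mathbb{Q}$, together with the odd-discriminant and $D_K \neq -3$ hypotheses, becomes indispensable.
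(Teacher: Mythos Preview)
The paper does not give an independent proof of this theorem: its entire proof is the one-line citation ``See \cite[Theorem 2.2.10]{howard-kolyvagin}.'' The result is quoted as Howard's theorem and used as a black box. So there is no argument in the paper to compare your proposal against, beyond checking whether your sketch is a faithful outline of what Howard actually does.

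On that score, your outline is broadly accurate: Howard's proof in \cite{howard-kolyvagin} does proceed by building the $\Lambda^{\mathrm{ac}}$-adic Heegner Kolyvagin system, invoking Cornut--Vatsal for non-triviality of $\kappa^{\mathrm{Heeg},\alpha,\infty}_1$, using self-duality of the Selmer structure to obtain the $M_\infty \oplus M_\infty$ shape, and extracting the $\iota$-symmetry from a Cassels--Tate / Flach-type pairing compatible with complex conjugation. The Chebotarev ingredient you flag is indeed one of the main technical points Howard must handle. One minor correction: the rank-one statement for $\widehat{\mathrm{Sel}}(K_\infty,T)$ and $\mathrm{Sel}(K_\infty,E[p^\infty])^\vee$ in Howard's argument is not obtained purely from an Euler characteristic computation together with Cornut--Vatsal; rather, the Kolyvagin system bound itself is what forces the corank to be exactly one (the Euler characteristic alone only gives a parity/lower bound in this self-dual setting). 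But this is a matter of emphasis rather than a gap.
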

\begin{proof}
See \cite[Theorem 2.2.10]{howard-kolyvagin}.
\end{proof}
\begin{rem}
Perrin-Riou's Heegner point main conjecture \cite{perrin-riou-heegner} is the equality of (3) of Theorem \ref{thm:howard-divisibility}.
See \cite{wan-heegner, burungale-castella-kim, castella-wan-imc-derivatives, sweeting-kolyvagin} for the recent developments on this conjecture.
\end{rem}
\begin{thm} \label{thm:non-triviality-heegner}
We keep all the assumptions in $\S$\ref{subsec:working-hypotheses} with even $\nu(N^-)$ and assume that $E$ has good ordinary reduction at $p$.
Let $\mathfrak{P}$ be a height one prime of $\Lambda^{\mathrm{ac}}$.
Then the following statements are equivalent.
\begin{enumerate}
\item  $ \ks^{\mathrm{Heeg}, \alpha, \infty}$ is non-trivial modulo $\mathfrak{P}$.
\item $\mathrm{ord}_{\mathfrak{P}} \left( \mathrm{char}_{\Lambda^{\mathrm{ac}}} \left(  \dfrac{\widehat{\mathrm{Sel}}(K_\infty, T)}{ \kappa^{\mathrm{Heeg}, \alpha, \infty}_1 } \right) \right) = \mathrm{ord}_{\mathfrak{P}} \left( \mathrm{char}_{\Lambda^{\mathrm{ac}}} \left( M_\infty \right) \right)$.
\end{enumerate}
\end{thm}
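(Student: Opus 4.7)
The plan is to mirror the strategy of Theorem \ref{thm:non-triviality-kn} from \cite{kim-structure-selmer}, replacing the Mazur--Rubin cyclotomic framework by Howard's $\Lambda^{\mathrm{ac}}$-adic Heegner point Kolyvagin system machinery of \cite{howard-kolyvagin}. Conceptually, the equivalence is a ``rigidity'' statement: the divisibility defect of every class in $\ks^{\mathrm{Heeg},\alpha,\infty}$ modulo $\mathfrak{P}$ is controlled by the gap between the two characteristic ideals appearing in the Heegner point main conjecture, and this gap is non-negative by Theorem \ref{thm:howard-divisibility}(3).

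First, I would localize at the height-one prime $\mathfrak{P}$ and pass to the discrete valuation ring $(\Lambda^{\mathrm{ac}})_{\mathfrak{P}}$, in which all $\Lambda^{\mathrm{ac}}$-torsion modules have a well-defined length equal to $\mathrm{ord}_\mathfrak{P}$ of their characteristic ideal. Under Assumption \ref{assu:non-triviality-heegner}, the module of $\Lambda^{\mathrm{ac}}$-adic Heegner point Kolyvagin systems for $(T, \mathcal{F}_{\mathrm{cl}}, \mathcal{P}^{\mathrm{ac}})$ is free of rank one in Howard's sense, and each higher class $\kappa^{\mathrm{Heeg},\alpha,\infty}_n$ is recovered from $\kappa_1 = \kappa^{\mathrm{Heeg},\alpha,\infty}_1$ via Kolyvagin derivative operations indexed by $n \in \mathcal{N}^{\mathrm{ac}}$. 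In particular, the ``index of divisibility'' of the entire collection modulo $\mathfrak{P}$ is well-defined.

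Second, I would apply Howard's structure theorem for $\Lambda^{\mathrm{ac}}$-adic Kolyvagin systems to establish the exact identity
\begin{equation*}
\mathrm{ord}_{\mathfrak{P}}\!\left( \mathrm{char}_{\Lambda^{\mathrm{ac}}}\!\left(\dfrac{\widehat{\mathrm{Sel}}(K_\infty, T)}{\Lambda^{\mathrm{ac}}\cdot \kappa^{\mathrm{Heeg}, \alpha, \infty}_1}\right) \right) - \mathrm{ord}_{\mathfrak{P}}\!\left( \mathrm{char}_{\Lambda^{\mathrm{ac}}}(M_\infty) \right) = 2\cdot \partial_{\mathfrak{P}}\!\left(\ks^{\mathrm{Heeg}, \alpha, \infty}\right),
\end{equation*}
where $\partial_{\mathfrak{P}}$ denotes the minimal $\mathfrak{P}$-adic divisibility of the classes $\kappa^{\mathrm{Heeg},\alpha,\infty}_n$ after reduction modulo $\mathfrak{P}$. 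The ``$\geq 0$'' direction is precisely Theorem \ref{thm:howard-divisibility}(3); the matching ``$\leq$'' direction is the content of the structure theorem, whose proof mimics the Mazur--Rubin argument in the core-rank-one setting using the Chebotarev input that underlies Howard's construction. The equivalence of (1) and (2) then drops out: $\ks^{\mathrm{Heeg},\alpha,\infty}$ is non-trivial modulo $\mathfrak{P}$ iff $\partial_\mathfrak{P} = 0$, which, by the displayed identity, happens iff the two characteristic ideals agree at $\mathfrak{P}$.

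The main obstacle will be the exact identity in the second step. Howard's original work establishes only the one-sided divisibility, so the refinement must be obtained by tracking the Kolyvagin derivative construction carefully and verifying that every ``missing'' factor in the characteristic ideal of the Selmer quotient is accounted for by a common $\mathfrak{P}$-adic divisibility of the higher classes. This is the anticyclotomic core-rank-one analogue of the Mazur--Rubin structure theorem, and (as in the cyclotomic treatment of \cite{kim-structure-selmer, sakamoto-p-selmer}) is the technical heart of the argument; the remainder of the proof is formal bookkeeping once this identity is in hand.
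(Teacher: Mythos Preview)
Your conceptual target is right: the gap
\[
\mathrm{ord}_{\mathfrak{P}}\Bigl(\mathrm{char}_{\Lambda^{\mathrm{ac}}}\bigl(\widehat{\mathrm{Sel}}(K_\infty,T)/\kappa^{\mathrm{Heeg},\alpha,\infty}_1\bigr)\Bigr)-\mathrm{ord}_{\mathfrak{P}}\bigl(\mathrm{char}_{\Lambda^{\mathrm{ac}}}(M_\infty)\bigr)
\]
should indeed be governed by $\partial^{(\infty)}$ of the $\mathfrak{P}$-specialized Kolyvagin system (the factor $2$ in your display is not correct, but that is minor). The genuine gap is in how you propose to reach this identity. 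You say: localize directly at $\mathfrak{P}$ and invoke a $\Lambda^{\mathrm{ac}}$-adic structure theorem whose ``$\leq$'' half you will prove by mimicking Mazur--Rubin. But no such theorem is available, and the obstacle is concrete: if $\mathfrak{P}$ divides $\mathrm{char}_{\Lambda^{\mathrm{ac}}}(\widehat{\mathrm{Sel}}/\kappa_1)$ then $\kappa^{\mathrm{Heeg},\alpha,(\mathfrak{P})}_1=0$, so $\mathrm{ord}(\ks^{\mathrm{Heeg},(\mathfrak{P})})>0$ and Kolyvagin's structure theorem (Theorem \ref{thm:structure-kolyvagin}) acquires the error term $\mathrm{Err}$ of Remark \ref{rem:error-term-heegner}. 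That error term is exactly what prevents your displayed identity from being an equality at $\mathfrak{P}$ itself, and your outline says nothing about how to control it.

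The paper, and in fact the very argument of \cite[Theorem 5.3.10]{mazur-rubin-book} and \cite{kim-structure-selmer} that you cite, does \emph{not} localize directly at $\mathfrak{P}$. It deforms: for $\mathfrak{P}=(g)$ one sets $\mathfrak{P}_M=(g+p^M)$ (or $\mathfrak{P}_M=(X^M+p)$ when $\mathfrak{P}=p\Lambda^{\mathrm{ac}}$), so that $\Lambda^{\mathrm{ac}}/\mathfrak{P}_M\simeq\Lambda^{\mathrm{ac}}/\mathfrak{P}$ as rings while $\mathfrak{P}_M$ is coprime to $\mathrm{char}_{\Lambda^{\mathrm{ac}}}(\widehat{\mathrm{Sel}}/\kappa_1)$ for $M\gg 0$. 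This forces $\kappa^{\mathrm{Heeg},\alpha,(\mathfrak{P}_M)}_1\neq 0$, hence $\mathrm{ord}(\ks^{\mathrm{Heeg},(\mathfrak{P}_M)})=0$ and the error term vanishes; Kolyvagin's structure theorem then gives exact length identities at each $\mathfrak{P}_M$ up to $O(1)$ in $M$. The bridge back to $\mathfrak{P}$ is the congruence $\ks^{\mathrm{Heeg},\alpha,(\mathfrak{P})}\equiv\ks^{\mathrm{Heeg},\alpha,(\mathfrak{P}_M)}\pmod{p^M}$, which shows that $\partial^{(\infty)}(\ks^{\mathrm{Heeg},\alpha,(\mathfrak{P})})<\infty$ iff $\partial^{(\infty)}(\ks^{\mathrm{Heeg},\alpha,(\mathfrak{P}_M)})$ stays bounded as $M\to\infty$, and the latter boundedness is read off from the length identities. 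This deformation-plus-congruence step is the missing idea in your outline; without it, the ``direct'' structure-theorem route you sketch does not close.
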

\begin{proof}
Our argument mainly follows \cite[Theorem 2.2.10]{howard-kolyvagin} but with some modification.

Let $f_{\Lambda^{\mathrm{ac}}}$ be a generator of $\mathrm{char}_{\Lambda^{\mathrm{ac}}} \left(  \dfrac{\widehat{\mathrm{Sel}}(K_\infty, T)}{ \kappa^{\mathrm{Heeg}, \alpha, \infty}_1 }\right) $. 
We first suppose that $\mathfrak{P} \neq p\Lambda^{\mathrm{ac}}$ and write $\mathfrak{P} = (g)$ where $g \in \Lambda^{\mathrm{ac}}$.
Define $\mathfrak{P}_M  = (g+p^M)\Lambda^{\mathrm{ac}}$.
By taking $M$ sufficiently large, we may assume the following statements.
\begin{itemize}
\item $\mathfrak{P}_M$ is a height one prime ideal of  $\Lambda^{\mathrm{ac}}$.
\item $\Lambda^{\mathrm{ac}} / \mathfrak{P}_M \simeq \Lambda^{\mathrm{ac}} / \mathfrak{P}$ as rings.
\item $\mathfrak{P}_M$ and $f_{\Lambda^{\mathrm{ac}}}$ are relatively prime, so we have $\kappa^{\mathrm{Heeg}, \alpha, (\mathfrak{P}_M)}_1 \neq 0$.
\item Both natural maps
\[
\xymatrix{
\widehat{\mathrm{Sel}}(K_\infty, T)/  \mathfrak{P}_M \to \mathrm{Sel}(K, T_{\mathfrak{P}_M}),
& \mathrm{Sel}(K, ( T_{\mathfrak{P}_M} )^* )  \to \mathrm{Sel}(K_\infty, E[p^\infty])[\mathfrak{P}_M]
}
\]
have finite kernel and cokernel which are bounded by a constant depending only on $[S_{\mathfrak{P}_M} : \Lambda^{\mathrm{ac}} / \mathfrak{P}_M]$
but not on $\mathfrak{P}_M$ itself
where $S_{\mathfrak{P}_M}$ is the integral closure of $\Lambda^{\mathrm{ac}} / \mathfrak{P}_M$ \cite[Proposition 2.2.8]{howard-kolyvagin}.
\end{itemize}
Following the argument of \cite[Theorem 5.3.10]{mazur-rubin-book},
the equality 
$(\mathfrak{P}_M, \mathfrak{P}^n ) = 
(\mathfrak{P}_M, p^{Mn} )$
implies that
\begin{align*}
\mathrm{length}_{\mathbb{Z}_p} \left(  \mathrm{Sel}(K, T_{\mathfrak{P}_M} ) / S_{\mathfrak{P}_M} \kappa^{\mathrm{Heeg}, \alpha, (\mathfrak{P}_M)}_1 \right) & = 
\mathrm{length}_{\mathbb{Z}_p} \left(  \Lambda^{\mathrm{ac}} / (f_{\Lambda^{\mathrm{ac}}} , \mathfrak{P}_M \right) \\
& = \mathrm{length}_{\mathbb{Z}_p} \left(  \Lambda^{\mathrm{ac}} / (  \mathfrak{P}^{ \mathrm{ord}_{\mathfrak{P}} (  f_{\Lambda^{\mathrm{ac}}} )}, \mathfrak{P}_M \right) \\
& = M \cdot \mathrm{rk}_{\mathbb{Z}_p}  ( \Lambda^{\mathrm{ac}} / \mathfrak{P}_M )  \cdot \mathrm{ord}_{\mathfrak{P}} (  f_{\Lambda^{\mathrm{ac}}} )
\end{align*}
up to $O(1)$ as $M$ varies. Here, $O(1)$ means that the differences are constant independent of $M$.
Similarly, we have
\begin{align*}
2 \cdot \mathrm{length}_{\mathbb{Z}_p} (M_{\mathfrak{P}_M}) & = \mathrm{length}_{\mathbb{Z}_p} \left( \mathrm{Sel}(K,  ( T_{\mathfrak{P}_M} )^* )_{/\mathrm{div}} \right) \\
& = \mathrm{length}_{\mathbb{Z}_p} \left(  ( \mathrm{Sel}(K_\infty,  E[p^\infty])^\vee / \mathfrak{P}_M )_{\mathbb{Z}_p\textrm{-tors}} \right) \\
& = M \cdot \mathrm{rk}_{\mathbb{Z}_p}  ( \Lambda^{\mathrm{ac}} / \mathfrak{P}_M )  \cdot \mathrm{ord}_{\mathfrak{P}} \left( \mathrm{char}_{\Lambda^{\mathrm{ac}}} \left( ( \mathrm{Sel}( K_\infty, E[p^\infty])^\vee)_{\Lambda^{\mathrm{ac}}\textrm{-tors}}  \right) \right)
\end{align*}
up to $O(1)$ as $M$ varies where $M_{\mathfrak{P}_M} = M_\infty \otimes_{\Lambda^{\mathrm{ac}}} S_{\mathfrak{P}_M}$.
We also have equality
\begin{align*}
\mathrm{length}_{S_{\mathfrak{P}_M}} \left(  \mathrm{Sel}(K, T_{\mathfrak{P}_M} ) / S_{\mathfrak{P}_M} \kappa^{\mathrm{Heeg}, \alpha, (\mathfrak{P}_M)}_1 \right) = \partial^{(0)} ( \ks^{\mathrm{Heeg}, \alpha, (\mathfrak{P}_M)} ) .
\end{align*}

Although we consider Kolyvagin systems at height one prime $\mathfrak{P}_M$, the corresponding Selmer structure is self-dual thanks to \cite[Lemma 2.1.1]{howard-kolyvagin}. 
Since $\kappa^{\mathrm{Heeg}, \alpha, (\mathfrak{P}_M)}_1 \neq 0$, we have
\begin{align*}
\mathrm{cork}_{ S_{\mathfrak{P}_M} }\mathrm{Sel}(K, (T_{\mathfrak{P}_M})^*) & = 1, \\
\mathrm{Sel}(K, (T_{\mathfrak{P}_M})^*)_{/\mathrm{div}} & \simeq \bigoplus_{i \geq 1} \left( S_{\mathfrak{P}_M} /  \mathfrak{m}^{a_i}_{ S_{\mathfrak{P}_M} }  S_{\mathfrak{P}_M} \right)^{\oplus 2}
\end{align*}
where $\mathfrak{m}_{ S_{\mathfrak{P}_M} }$ is the maximal ideal of $S_{\mathfrak{P}_M}$ and $a_1 \geq  a_2 \geq  \cdots$ thanks to \cite[Theorem 1.6.1]{howard-kolyvagin}.
In particular, for any $k \geq 1$ such that $\kappa^{\mathrm{Heeg}, \alpha, (\mathfrak{P}_M)}_1 \neq 0$ in $S_{\mathfrak{P}_M} /\mathfrak{m}^{k}_{ S_{\mathfrak{P}_M}} S_{\mathfrak{P}_M}$, we have
\begin{align*}
\mathrm{Sel}(K, (T_{\mathfrak{P}_M})^*[\mathfrak{m}^{k}_{ S_{\mathfrak{P}_M}}])  & \simeq  S_{\mathfrak{P}_M} /\mathfrak{m}^{k}_{ S_{\mathfrak{P}_M}} S_{\mathfrak{P}_M} \oplus M^{(k)} \oplus M^{(k)}  \\
& \simeq S_{\mathfrak{P}_M} /\mathfrak{m}^{k}_{ S_{\mathfrak{P}_M}} S_{\mathfrak{P}_M} \oplus \bigoplus_{i \geq 1} \left( S_{\mathfrak{P}_M} /  \mathfrak{m}^{a_i}_{ S_{\mathfrak{P}_M} }  S_{\mathfrak{P}_M} \right)^{\oplus 2}
\end{align*}
with $\mathrm{length}_{S_{\mathfrak{P}_M}} M^{(k)} < k$ following the proof of \cite[Theorem 1.6.1]{howard-kolyvagin}.
Denote by $$\ks^{\mathrm{Heeg}, \alpha, (\mathfrak{P}_M), 2k-1} = \left\lbrace \kappa^{\mathrm{Heeg}, \alpha, (\mathfrak{P}_M)}_n : n \in \mathcal{N}^{\mathrm{ac}}_{2k-1} \right\rbrace$$
 the Heegner point Kolyvagin system for $(T_{\mathfrak{P}_M}/ \mathfrak{m}^{k}_{ S_{\mathfrak{P}_M}} T_{\mathfrak{P}_M}, \mathcal{F}_{\mathrm{cl}}, \mathcal{N}^{\mathrm{ac}}_{2k-1})$.
We recall the following rigidity result for Heegner point Kolyvagin systems \cite[Lemma 2.3.1]{zanarella-howard}\footnote{This is also studied in an unpublished manuscript of Stein--Weinstein \cite{stein-weinstein}.}, which is a strengthening of \cite[Lemma 1.6.4]{howard-kolyvagin}.
For $n \in \mathcal{N}^{\mathrm{ac}}_{2k-1}$, there exists a unique integer $\delta^{\mathrm{Heeg}}_{\mathfrak{P}_M}(k)$, independent of $n$, such that
\begin{equation} \label{eqn:rigidity-kolyvagin}
\left\langle \kappa^{\mathrm{Heeg}, \alpha, (\mathfrak{P}_M)}_n \right\rangle  = \mathfrak{m}^{\lambda^{(k)}(n) + \delta^{\mathrm{Heeg}}_{\mathfrak{P}_M}(k) }_{ S_{\mathfrak{P}_M}}\mathrm{Sel}_{\mathcal{F}_{\mathrm{cl}}(n)}(K, T_{\mathfrak{P}_M}/ \mathfrak{m}^{k}_{ S_{\mathfrak{P}_M}} T_{\mathfrak{P}_M})
\end{equation}
where $\lambda^{(k)}(n) = \mathrm{length}_{ S_{\mathfrak{P}_M} }  M^{(k)}(n)$ and
$\mathrm{Sel}_{\mathcal{F}_{\mathrm{cl}}(n)}(K, (T_{\mathfrak{P}_M})^*[\mathfrak{m}^{k}_{ S_{\mathfrak{P}_M}}])   \simeq  S_{\mathfrak{P}_M} /\mathfrak{m}^{k}_{ S_{\mathfrak{P}_M}} S_{\mathfrak{P}_M} \oplus M^{(k)}(n) \oplus M^{(k)}(n)$.
The restriction of Kolyvagin primes to $\mathcal{N}^{\mathrm{ac}}_{2k-1}$ is essential here and $\delta^{\mathrm{Heeg}}_{\mathfrak{P}_M}(k)$ is also independent of $k$ if $k \gg 0$.
By (\ref{eqn:rigidity-kolyvagin}), we have
\begin{align*}
\mathrm{length}_{S_{\mathfrak{P}_M}} M^{(k)} & = \sum_{i \geq 1} a_i \\
& = \partial^{(0)} ( \ks^{\mathrm{Heeg}, \alpha, (\mathfrak{P}_M)} ) - \delta^{\mathrm{Heeg}}_{\mathfrak{P}_M}(k) .
\end{align*}
Combining all the above computations, we obtain equalities
\begin{align} \label{eqn:heegner-non-triviality-1}
\begin{split}
& 2 \cdot M \cdot \mathrm{rk}_{\mathbb{Z}_p}  ( S_{\mathfrak{P}_M} )  \cdot \mathrm{ord}_{\mathfrak{P}} (  f_{\Lambda^{\mathrm{ac}}} ) \\
& = 2 \cdot \mathrm{length}_{\mathbb{Z}_p} \left(  \mathrm{Sel}(K, T_{\mathfrak{P}_M} ) / S_{\mathfrak{P}_M} \kappa^{\mathrm{Heeg}, \alpha, (\mathfrak{P}_M)}_1 \right) \\ 
& =  2 \cdot\dfrac{\mathrm{rk}_{\mathbb{Z}_p}  ( S_{\mathfrak{P}_M} ) }{e(S_{\mathfrak{P}_M} / \mathbb{Z}_p)} \cdot  \mathrm{length}_{S_{\mathfrak{P}_M}} \left(  \mathrm{Sel}(K, T_{\mathfrak{P}_M} ) / S_{\mathfrak{P}_M} \kappa^{\mathrm{Heeg}, \alpha, (\mathfrak{P}_M)}_1 \right) \\
& = 2 \cdot\dfrac{\mathrm{rk}_{\mathbb{Z}_p}  ( S_{\mathfrak{P}_M} ) }{e(S_{\mathfrak{P}_M} / \mathbb{Z}_p)} \cdot \partial^{(0)} ( \ks^{\mathrm{Heeg}, \alpha, (\mathfrak{P}_M)} ) \\
& \geq  2 \cdot\dfrac{\mathrm{rk}_{\mathbb{Z}_p}  ( S_{\mathfrak{P}_M} ) }{e(S_{\mathfrak{P}_M} / \mathbb{Z}_p)} \cdot \left( \partial^{(0)} ( \ks^{\mathrm{Heeg}, \alpha, (\mathfrak{P}_M)} ) -  \delta^{\mathrm{Heeg}}_{\mathfrak{P}_M}(k) \right) \\
& = \dfrac{\mathrm{rk}_{\mathbb{Z}_p}  ( S_{\mathfrak{P}_M} ) }{e(S_{\mathfrak{P}_M} / \mathbb{Z}_p)} \cdot  \mathrm{length}_{S_{\mathfrak{P}_M}} \left( \mathrm{Sel}(K, (T_{\mathfrak{P}_M})^*)_{/\mathrm{div}} \right)  \\
& =  \mathrm{length}_{\mathbb{Z}_p} \left( \mathrm{Sel}(K, (T_{\mathfrak{P}_M})^*)_{/\mathrm{div}} \right)
\end{split}
\end{align}
up to $O(1)$ as $M$ varies, and
\begin{align} \label{eqn:heegner-non-triviality-2}
\begin{split}
& \mathrm{length}_{\mathbb{Z}_p} \left( \mathrm{Sel}(K,  ( T_{\mathfrak{P}_M} )^* )_{/\mathrm{div}} \right) \\
& = M \cdot \mathrm{rk}_{\mathbb{Z}_p}  ( S_{\mathfrak{P}_M} )   \cdot \mathrm{ord}_{\mathfrak{P}} \left( \mathrm{char}_{\Lambda^{\mathrm{ac}}} \left( ( \mathrm{Sel}( K_\infty, E[p^\infty])^\vee)_{\Lambda^{\mathrm{ac}}\textrm{-tors}}  \right) \right)
\end{split}
\end{align}
up to $O(1)$ as $M$ varies again.

We prove (2) $\Rightarrow$ (1) first.
Suppose that
$$\mathrm{ord}_{\mathfrak{P}} \left( \mathrm{char}_{\Lambda^{\mathrm{ac}}} \left( ( \mathrm{Sel}( K_\infty, E[p^\infty])^\vee)_{\Lambda^{\mathrm{ac}}\textrm{-tors}}  \right) \right) 
=2  \cdot \mathrm{ord}_{\mathfrak{P}} (  f_{\Lambda^{\mathrm{ac}}} ) .$$
Combining (\ref{eqn:heegner-non-triviality-1}) with  (\ref{eqn:heegner-non-triviality-2}), 
the inequality above becomes an equality, so $\delta^{\mathrm{Heeg}}_{\mathfrak{P}_M}(k)$ is also a constant  as $M$ varies.
Since 
\begin{equation} \label{eqn:congruences-p-M}
\ks^{\mathrm{Heeg}, \alpha, (\mathfrak{P})} \equiv \ks^{\mathrm{Heeg}, \alpha, (\mathfrak{P}_M)} \pmod{p^M}
\end{equation}
for every $M \geq 1$,
we obtain $\delta^{\mathrm{Heeg}}_{\mathfrak{P}}(k) = \delta^{\mathrm{Heeg}}_{\mathfrak{P}_M}(k) <\infty$  by taking $M > k$, so $ \ks^{\mathrm{Heeg}, \alpha, (\mathfrak{P})}$ is also non-trivial.

Now we prove (1) $\Rightarrow$ (2).
Suppose that $ \ks^{\mathrm{Heeg}, \alpha, (\mathfrak{P})}$ is non-trivial, so $ \delta^{\mathrm{Heeg}}_{\mathfrak{P}}(k) < \infty$. 
By using the congruence (\ref{eqn:congruences-p-M}) again, $\delta^{\mathrm{Heeg}}_{\mathfrak{P}_M}(k)$ is bounded as $M$ varies, so the inequality above becomes an equality.
Combining (\ref{eqn:heegner-non-triviality-1}) with  (\ref{eqn:heegner-non-triviality-2}), we obtain
$$\mathrm{ord}_{\mathfrak{P}} \left( \mathrm{char}_{\Lambda^{\mathrm{ac}}} \left( ( \mathrm{Sel}( K_\infty, E[p^\infty])^\vee)_{\Lambda^{\mathrm{ac}}\textrm{-tors}}  \right) \right) 
=2  \cdot \mathrm{ord}_{\mathfrak{P}} (  f_{\Lambda^{\mathrm{ac}}} ) .$$

When $\mathfrak{P} = p\Lambda^{\mathrm{ac}}$, the same argument works by taking $\mathfrak{P}_M = X^M + p$.
\end{proof}

\begin{defn} \label{defn:non-anomalous}
For an elliptic curve $E$ with good ordinary reduction $p$, we say that \textbf{$E$ is non-anomalous at $p$} 
if
\begin{enumerate}
\item $a_p(E) \not\equiv 1 \pmod{p}$ if $p$ splits in $K$, and
\item $a_p(E) \not\equiv \pm 1 \pmod{p}$ if $p$ is inert in $K$.
\end{enumerate}
\end{defn}

\begin{cor} \label{cor:non-triviality-heegner}
We keep all the assumptions in $\S$\ref{subsec:working-hypotheses} with even $\nu(N^-)$ and 
 assume that $E$ has good ordinary reduction at $p$.
\begin{enumerate}
\item The Heegner point main conjecture localized at $X\Lambda^{\mathrm{ac}}$ holds if and only if  $ \ks^{\mathrm{Heeg}}$ is non-trivial, i.e. Kolyvagin's conjecture holds.
\item If $E$ is non-anomalous at $p$ and $\ks^{\mathrm{Heeg}}$ is non-trivial modulo $p$, then the Heegner point main conjecture holds.
\item  We assume that $E$ is non-anomalous at $p$ and $p$ splits in $K$. Then the following statements are equivalent.
\begin{enumerate}
\item $\ks^{\mathrm{Heeg}}$ is non-trivial modulo $p$.
\item  The Heegner point main conjecture holds and $p$ does not divide any Tamagawa factor at a prime dividing $N^+$.
\end{enumerate}
\item If the Heegner point main conjecture localized at $X\Lambda^{\mathrm{ac}}$ holds, then the following rank one $p$-converse results follow
\begin{align*}
\mathrm{cork}_{\mathbb{Z}_p}\mathrm{Sel}(K, E[p^\infty]) =1 & \Rightarrow \mathrm{ord}_{s=1}L(E/K,s) = 1, \\
\mathrm{cork}_{\mathbb{Z}_p}\mathrm{Sel}(\mathbb{Q}, E[p^\infty]) =1 & \Rightarrow \mathrm{ord}_{s=1}L(E,s) = 1. 
\end{align*}
\end{enumerate}
\end{cor}
\begin{proof}
\begin{enumerate}
\item 
We have
\begin{align*}
\ks^{\mathrm{Heeg}, \alpha} & = \left\lbrace \kappa^{\mathrm{Heeg}, \alpha}_n  \in \mathrm{H}^1(K, T/I^{\mathrm{ac}}_nT) : n \in \mathcal{N}^{\mathrm{ac}}_1 \right\rbrace \\
& = \left\lbrace C_{E,p} \cdot \kappa^{\mathrm{Heeg}}_n  \in \mathrm{H}^1(K, T/I^{\mathrm{ac}}_nT) : n \in \mathcal{N}^{\mathrm{ac}}_1 \right\rbrace \\
& =  C_{E,p} \cdot \ks^{\mathrm{Heeg}}_n 
\end{align*}
where $C_{E,p} \in \mathbb{Z}_p$ is the constant depending only on $E$ and the splitting behavior of $p$ in $K/\mathbb{Q}$.  See Remark \ref{rem:comparison-p-stabilization} below for further details.
If $E$ is non-anomalous at $p$, then $C_{E,p}$ is a $p$-adic unit.
Since $\partial^{(\infty)}( \ks^{\mathrm{Heeg}, \alpha}  ) = \partial^{(\infty)}( \ks^{\mathrm{Heeg}}  ) + \mathrm{ord}_p(C_{E,p})$,
 the non-trivialities of $\ks^{\mathrm{Heeg}}$ and $\ks^{\mathrm{Heeg}, \alpha}$ are equivalent.
The conclusion follows easily from Theorem \ref{thm:non-triviality-heegner}.
\item In the same manner, Theorem \ref{thm:non-triviality-heegner} implies that the mod $p$ non-triviality of $\ks^{\mathrm{Heeg}}$ $\Rightarrow$ the Heegner point main conjecture.
\item We first show  that (b) $\Rightarrow$ (a).
We follow the strategy of \cite{zanarella-howard} and give a sketch only.
Let $\chi$ be a finite order character on $\mathrm{Gal}(K_\infty/K)$ and $\mathfrak{P}_\chi \subseteq \Lambda^{\mathrm{ac}}$ the height one prime ideal defined by the kernel of $\chi: \Lambda^{\mathrm{ac}} \to \mathcal{O}_\chi$ where $\mathcal{O}_\chi$ is the discrete valuation ring generated by the values of $\chi$ over $\mathbb{Z}_p$.
Let $\ks^{\mathrm{Heeg}, \alpha, (\mathfrak{P}_\chi)}$ be the $\chi$-specialization of $\ks^{\mathrm{Heeg}, \alpha, \infty}$
and we may assume that $\kappa^{\mathrm{Heeg}, \alpha, (\mathfrak{P}_\chi)}_1 \neq 0$ by varying $\chi$.
Applying Kolyvagin's structure theorem (Theorem \ref{thm:structure-kolyvagin}) to this setting,
we obtain
$$\mathrm{length}_{\mathcal{O}_\chi} M_\chi = \partial^{(0)}( \ks^{\mathrm{Heeg}, \alpha, (\mathfrak{P}_\chi)} ) - \partial^{(\infty)}( \ks^{\mathrm{Heeg}, \alpha, (\mathfrak{P}_\chi)} ) $$
where $M_\chi = M_\infty \otimes_{  \Lambda^{\mathrm{ac}}, \chi  } \mathcal{O}_\chi$.
Note that we do not see any error term (``the $\chi$-twisted version of $\mathrm{Err}$" in Remark \ref{rem:error-term-heegner}) since $\kappa^{\mathrm{Heeg}, \alpha, (\mathfrak{P}_\chi)}_1 \neq 0$.
We now assume that the Heegner point main conjecture.
Since $p$ splits in $K$, it is equivalent to the BDP main conjecture \cite[Theorem 5.2]{burungale-castella-kim}. 
Evaluating the BDP $p$-adic $L$-function at $\chi^{-1}$, we obtain the following formula from \cite[Theorem 4.4.3]{zanarella-howard}, which is the $\chi$-twisted version of \cite[Proposition A.3]{burungale-castella-kim}
$$2 \cdot \mathrm{length}_{\mathcal{O}_\chi} M_\chi = 2 \cdot \partial^{(0)}( \ks^{\mathrm{Heeg}, \alpha, (\mathfrak{P}_\chi)} ) - \sum_{v \vert N^+} \mathrm{ord}_p\left(  c_v((T_{\mathfrak{P}_\chi})^*/K) \right)$$
where $c_v((T_{\mathfrak{P}_\chi})^*/K)$ is the Tamagawa factor of $(T_{\mathfrak{P}_\chi})^* = \mathrm{Hom}( T_{\mathfrak{P}_\chi} , \mu_{p^\infty} )  $ over $K$ at $v$ and  $v$ runs over places of $K$.
Thus, we obtain
$$2 \cdot \partial^{(\infty)}( \ks^{\mathrm{Heeg}, \alpha, (\mathfrak{P}_\chi)} ) = \sum_{v \vert N^+} \mathrm{ord}_p\left(  c_v((T_{\mathfrak{P}_\chi})^*/K) \right) .$$
Under our assumptions (especially on the non-anomalous one), \cite[Proposition 3.1.3]{zanarella-howard} shows that
$\partial^{(\infty)}( \ks^{\mathrm{Heeg}, \alpha} ) = 0$ if and only if 
$\partial^{(\infty)}( \ks^{\mathrm{Heeg}, \alpha, (\mathfrak{P}_\chi)} ) = 0$.
Also, \cite[Proposition 4.3.5]{zanarella-howard} implies that
$\sum_{v \vert N^+} \mathrm{ord}_p\left(  c_v(E/K) \right) = 0$ if and only if
$\sum_{v \vert N^+} \mathrm{ord}_p\left(  c_v((T_{\mathfrak{P}_\chi})^*/K) \right) = 0$.
Since the mod $p$ non-vanishing properties of $\ks^{\mathrm{Heeg}}$ and $\ks^{\mathrm{Heeg}, \alpha}$ are equivalent under the non-anomalous assumption,
$\ks^{\mathrm{Heeg}}$ is non-trivial modulo $p$.

For the direction (a) $\Rightarrow$ (b), we know that the mod $p$ non-triviality of $\ks^{\mathrm{Heeg}}$ $\Rightarrow$ the Heegner point main conjecture by using (2).
By using the same argument of the (b) $\Rightarrow$ (a) part, 
any Tamagawa factor at a prime dividing $N^+$ is not divisible by $p$ if and only if $\ks^{\mathrm{Heeg}}$ is not divisible by $p$. The conclusion follows.
\item
 The $p$-converse over $K$ immediately follows from (1), Theorem \ref{thm:kolyvagin-vanishing-order}.(1), and Gross--Zagier formula \cite{gross-zagier-original}.
 The $p$-converse over $\mathbb{Q}$ follows from  the $p$-converse over $K$ with a suitable choice of $K$ \cite{bump-friedberg-hoffstein, murty-murty-mean}.
 \end{enumerate}
\end{proof}
\begin{rem}
\begin{enumerate}
\item Corollary \ref{cor:non-triviality-heegner}.(1) says that a small piece of the Heegner point main conjecture is strong enough to deduce Kolyvagin's conjecture, 
i.e. the non-triviality of  $\ks^{\mathrm{Heeg}}$.
\item Corollary \ref{cor:non-triviality-heegner}.(2) significantly simplifies the proof of the main result of \cite{burungale-castella-kim}.
See also \cite{zanarella-howard}.
%A similar idea is observed in \cite{zanarella-howard}, but no consideration on the error term is given therein.
If we further assume that $\kappa^{\mathrm{Heeg}}_1 \neq 0$, then $p$ does not divide any Tamagawa factor at a prime dividing $N^+$ thanks to the work of Jetchev \cite{jetchev-global-divisibility}.
\item Corollary \ref{cor:non-triviality-heegner}.(3) is also proved in \cite{zanarella-howard}.
\end{enumerate}
\end{rem}

\begin{rem} \label{rem:supersingular}
We expect that most argument easily generalizes to the $\pm$-Iwasawa-theoretic setting when $E$ has supersingular reduction at $p$.
%However, $p$ should split in $K$ in this case because the formulation of the anticyclotomic $\pm$-Iwasawa theory for non-CM elliptic curves at an inert prime is still mysterious.
\end{rem}

\begin{rem} \label{rem:comparison-p-stabilization}
The standard $p$-stabilization process yields the following equality:
$$
\kappa^{\mathrm{Heeg}, \alpha}_n =
\left\lbrace
\begin{array}{ll}
 \left(1- \dfrac{1}{\alpha} \right)^2 \cdot \kappa^{\mathrm{Heeg}}_n & \textrm{ if }  p \textrm{ splits in } K \\
 \left(1- \dfrac{1}{\alpha^2} \right) \cdot \kappa^{\mathrm{Heeg}}_n & \textrm{ if } p \textrm{ is inert in } K .
\end{array} \right.$$
In \cite[$\S$2.3]{howard-kolyvagin},  Howard gave a construction of the $\Lambda^{\mathrm{ac}}$-adic Heegner point Kolyvagin system $\ks^{\mathrm{How}, \infty}$, and it is slightly different from the $p$-stabilized one.  His construction essentially follows Perrin-Riou's one in \cite{perrin-riou-heegner}.
By using  \cite[Lemma 2.3.3]{howard-kolyvagin},  we can deduce the following formula:
$$
\kappa^{\mathrm{How}}_n =
\left\lbrace
\begin{array}{ll}
 \left(1-a_p+p \right)^2 \cdot \kappa^{\mathrm{Heeg}}_n & \textrm{ if }  p \textrm{ splits in } K \\
 \left(1-a_p+p \right) \cdot \left(1+a_p+p \right) \cdot \kappa^{\mathrm{Heeg}}_n & \textrm{ if } p \textrm{ is inert in } K .
\end{array} \right.$$
where $\ks^{\mathrm{How}} = \left\lbrace  \kappa^{\mathrm{How}}_n \right\rbrace$ is the specialization of $\ks^{\mathrm{How}, \infty}$ at the trivial character.
Under the non-anomalous assumption,  all these ``$p$-adic multipliers" are $p$-adic units.
Since the ratio between these $p$-adic multipliers is also a $p$-adic unit,  any $\Lambda^{\mathrm{ac}}$-adic lift of the ratio is also $\Lambda^{\mathrm{ac}}$-adic unit.
Therefore, the difference between 
$\kappa^{\mathrm{Heeg}, \alpha, \infty}_1$ and $\kappa^{\mathrm{How}, \infty}_1$ is a $\Lambda^{\mathrm{ac}}$-adic unit.
\end{rem}

\subsection{The non-triviality of $\lambdab^{\mathrm{bip}}$}
We keep all the settings in $\S$\ref{subsec:working-hypotheses} with odd $\nu(N^-)$.
We also  assume that $E$ has good ordinary reduction at $p$ and  adapt most notation from the previous subsection.

Let $\lambda^{\mathrm{bip}, \alpha, \infty}_1 \in \Lambda^{\mathrm{ac}}$ be the $p$-stabilized Bertolini--Darmon's theta element for $E$ over $K_\infty$ \cite{bertolini-darmon-imc-2005}.
It is known that $\lambda^{\mathrm{bip}, \alpha, \infty}_1 \neq 0$ thanks to the work of Vatsal \cite{vatsal-uniform}.

In \cite[$\S$3.2]{howard-bipartite}, the mod $p^k$ version of $\Lambda^{\mathrm{ac}}$-adic bipartite Euler systems is illustrated.
As in the proof of Theorem \ref{thm:structure-bipartite}, $\Lambda^{\mathrm{ac}}$-adic bipartite Euler systems are defined by taking the inverse limit with respect to $k$.
Denote by $\lambdab^{\mathrm{bip}, \alpha, \infty}$ the definite part of the $\Lambda^{\mathrm{ac}}$-adic bipartite Euler system.

\begin{thm}[Bertolini--Darmon] \label{thm:bertolini-darmon-divisibility}
We keep all the assumptions in $\S$\ref{subsec:working-hypotheses} with odd $\nu(N^-)$ and 
 assume that $E$ has good ordinary reduction at $p$.
Then 
 $\mathrm{Sel}( K_\infty, E[p^\infty])^\vee$ 
is a finitely generated torsion $\Lambda^{\mathrm{ac}}$-module, and 
$$\left( \lambda^{\mathrm{bip}, \alpha, \infty}_1 \cdot \lambda^{\mathrm{bip}, \alpha, \infty, \iota}_1 \right) \subseteq \mathrm{char}_{\Lambda^{\mathrm{ac}}} \left( \mathrm{Sel}(K_\infty, E[p^\infty])^\vee \right) .$$
\end{thm}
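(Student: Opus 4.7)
The plan is to deduce this from the $\Lambda^{\mathrm{ac}}$-adic bipartite Euler system formalism of Section \ref{subsec:formalism-bipartite}, running in parallel to the proof of Theorem \ref{thm:non-triviality-heegner}.

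For torsionness, I would start from Vatsal's non-vanishing theorem \cite{vatsal-uniform}, which gives $\lambda^{\mathrm{bip},\alpha,\infty}_1 \neq 0$ in $\Lambda^{\mathrm{ac}}$. For any height-one prime $\mathfrak{P} \subseteq \Lambda^{\mathrm{ac}}$ coprime to $\lambda^{\mathrm{bip},\alpha,\infty}_1$, reducing the $\Lambda^{\mathrm{ac}}$-adic bipartite Euler system modulo $p^k$ and specializing along an approximating sequence $\mathfrak{P}_M$ of height-one primes (constructed as in the proof of Theorem \ref{thm:non-triviality-heegner}) yields non-trivial free bipartite Euler systems at each finite level. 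Theorem \ref{thm:structure-bipartite} then forces the $\mathfrak{P}$-part of $\mathrm{Sel}(K_\infty, E[p^\infty])^\vee$ to have finite length; since this holds for cofinitely many $\mathfrak{P}$, the Selmer group is $\Lambda^{\mathrm{ac}}$-torsion.

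For the divisibility, fix a height-one prime $\mathfrak{P}$ of $\Lambda^{\mathrm{ac}}$ and choose an approximating family $\mathfrak{P}_M$ coprime to $\lambda^{\mathrm{bip},\alpha,\infty}_1$, $\lambda^{\mathrm{bip},\alpha,\infty,\iota}_1$, and a generator of $\mathrm{char}_{\Lambda^{\mathrm{ac}}}(\mathrm{Sel}(K_\infty, E[p^\infty])^\vee)$, with $\Lambda^{\mathrm{ac}}/\mathfrak{P}_M \simeq \Lambda^{\mathrm{ac}}/\mathfrak{P}$ as rings. Applying Theorem \ref{thm:structure-pk-bipartite} at $r = 0$ to the $\mathfrak{P}_M$-specialization of $\lambdab^{\mathrm{bip},\alpha,\infty}$ gives
$$
\mathrm{length}_{S_{\mathfrak{P}_M}}\left( \mathrm{Sel}(K, (T_{\mathfrak{P}_M})^*)_{/\mathrm{div}} \right) \ \leq \ 2 \cdot \mathrm{ind}\left(\lambda^{\mathrm{bip},\alpha,(\mathfrak{P}_M)}_1 , S_{\mathfrak{P}_M} \right).
$$
Repeating the bookkeeping of (\ref{eqn:heegner-non-triviality-1})--(\ref{eqn:heegner-non-triviality-2}) with the control theorem identification of $\mathrm{Sel}(K_\infty, E[p^\infty])^\vee/\mathfrak{P}_M$ with $\mathrm{Sel}(K, (T_{\mathfrak{P}_M})^*)^\vee$ up to $O(1)$, together with the congruence $\lambda^{\mathrm{bip},\alpha,(\mathfrak{P}_M)}_1 \equiv \lambda^{\mathrm{bip},\alpha,\infty}_1 \pmod{\mathfrak{P}_M}$, dividing by $M \cdot \mathrm{rk}_{\mathbb{Z}_p}(\Lambda^{\mathrm{ac}}/\mathfrak{P}_M)$, and passing to the limit $M \to \infty$ yields $\mathrm{ord}_\mathfrak{P}(\mathrm{char}) \leq 2 \cdot \mathrm{ord}_\mathfrak{P}(\lambda^{\mathrm{bip},\alpha,\infty}_1)$. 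The characteristic ideal is $\iota$-invariant by the self-duality of $T$, so the same bound holds with $\mathfrak{P}$ replaced by $\mathfrak{P}^\iota$, giving $\mathrm{ord}_\mathfrak{P}(\mathrm{char}) \leq 2 \cdot \mathrm{ord}_\mathfrak{P}(\lambda^{\mathrm{bip},\alpha,\infty,\iota}_1)$. Since $\min(2a, 2b) \leq a+b$, combining these inequalities yields
$$
\mathrm{ord}_{\mathfrak{P}}\left(\mathrm{char}_{\Lambda^{\mathrm{ac}}}(\mathrm{Sel}(K_\infty, E[p^\infty])^\vee)\right) \ \leq \ \mathrm{ord}_{\mathfrak{P}}\left( \lambda^{\mathrm{bip},\alpha,\infty}_1 \cdot \lambda^{\mathrm{bip},\alpha,\infty,\iota}_1 \right),
$$
which is the desired divisibility after ranging over all height-one primes.

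The main obstacle is not the Euler-system bookkeeping above, which is formal once the system is in place, but the construction and integrality of the $\Lambda^{\mathrm{ac}}$-adic bipartite Euler system itself: one must verify that Bertolini--Darmon's level-raising congruences for the quaternionic forms $f_{nN^-}$ assemble into $\Lambda^{\mathrm{ac}}$-adic families satisfying both explicit reciprocity laws integrally, and that Hida's $p$-ordinary stabilization is compatible with these congruences. Under Assumption \ref{assu:non-triviality-bipartite} and Condition CR, all of this is supplied by \cite{bertolini-darmon-imc-2005}, so the argument ultimately rests on their original construction combined with the structural content of Theorem \ref{thm:structure-pk-bipartite}.
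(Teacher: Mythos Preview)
The paper does not actually prove this theorem: its ``proof'' is a one-line citation to \cite{bertolini-darmon-imc-2005, pw-mu, chida-hsieh-main-conj, kim-pollack-weston} together with \cite[Theorem 3.2.3.(b)]{howard-bipartite}. So there is no in-paper argument to compare against; the result is imported wholesale as known.

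Your sketch is correct, and it is essentially a reconstruction of Howard's axiomatic argument \cite[Theorem 3.2.3.(b)]{howard-bipartite}, which is precisely one of the references the paper points to. Two small remarks. First, the inequality $\mathrm{length}_{S_{\mathfrak{P}_M}}(\mathrm{Sel}) \leq 2\cdot \mathrm{ind}(\lambda_1)$ comes most directly from Theorem \ref{thm:stub-submodulues-bipartite} (the containment $\lambda^{\mathrm{bip}}_1 \in \mathrm{Stub}_1 = \mathfrak{m}^{\mathrm{length}(M_1)}R$) rather than from Theorem \ref{thm:structure-pk-bipartite}; the latter requires knowing $\delta$, which you do not. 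Second, your detour through $\min(2a,2b) \leq a+b$ and $\iota$-invariance of the characteristic ideal is unnecessary here: Lemma \ref{lem:theta-involution} already gives $(\lambda^{\mathrm{bip},\alpha,\infty}_1) = (\lambda^{\mathrm{bip},\alpha,\infty,\iota}_1)$, so the two bounds coincide. Your closing paragraph is the honest point: the entire argument bottoms out in the construction of the $\Lambda^{\mathrm{ac}}$-adic system and the two explicit reciprocity laws, which is exactly the content of \cite{bertolini-darmon-imc-2005} and its refinements, and this is why the paper simply cites the result rather than reproving it.
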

\begin{proof}
See \cite{bertolini-darmon-imc-2005, pw-mu, chida-hsieh-main-conj, kim-pollack-weston}. See also \cite[Theorem 3.2.3.(b)]{howard-bipartite}.
\end{proof}
\begin{rem}
The anticyclotomic main conjecture for elliptic curves \`{a} la Bertolini--Darmon is the equality of the inequality in Theorem \ref{thm:bertolini-darmon-divisibility}.
\end{rem}
\begin{lem} \label{lem:theta-involution}
We keep all the assumptions in $\S$\ref{subsec:working-hypotheses} with odd $\nu(N^-)$ and 
 assume that $E$ has good ordinary reduction at $p$.
Then
$\left( \lambda^{\mathrm{bip}, \alpha, \infty}_1 \right) = \left( \lambda^{\mathrm{bip}, \alpha, \infty, \iota}_1 \right)$ in $\Lambda^{\mathrm{ac}}$.
\end{lem}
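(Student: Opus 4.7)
The plan is to establish the functional equation
$$\lambda^{\mathrm{bip}, \alpha, \infty, \iota}_1 = w \cdot \lambda^{\mathrm{bip}, \alpha, \infty}_1$$
for some sign $w \in \{\pm 1\}$; once this holds, the two elements generate the same principal ideal of $\Lambda^{\mathrm{ac}}$ and the lemma follows immediately.

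To prove the functional equation, first unwind the construction: by definition, $\lambda^{\mathrm{bip}, \alpha, \infty}_1 = \varprojlim_n \theta_n$ where
$$\theta_n = \sum_{\sigma \in \mathrm{Gal}(K_n/K)} f_{N^-,\alpha}\bigl(\sigma \cdot \varsigma^{(n)}\bigr)\,\sigma \in \mathbb{Z}_p[\mathrm{Gal}(K_n/K)],$$
with $f_{N^-,\alpha}$ the $p$-stabilisation (with $U_p$-eigenvalue $\alpha$) of the quaternionic eigenform, and $\varsigma^{(n)}$ the Gross point of conductor $p^n$ (cf.\ Appendix \ref{sec:gross-points}). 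The complex conjugation $\tau \in \mathrm{Gal}(K/\mathbb{Q})$ acts on $\mathrm{Gal}(K_\infty/K)$ by inversion because $K_\infty/\mathbb{Q}$ is anticyclotomic; this is precisely the content of $\iota$. At the level of Gross points, a direct computation using the optimal embedding $\Phi : K \hookrightarrow B_{N^-}$ shows that $\tau \cdot \varsigma^{(n)}$ differs from $\varsigma^{(n)}$ only by the action of a suitable Atkin--Lehner element at $N^-$. Since $f_{N^-,\alpha}$ is an eigenvector for this Atkin--Lehner involution with eigenvalue $\epsilon = \pm 1$, combining these observations yields $\theta_n^\iota = \epsilon \cdot \theta_n$ for every $n$, and passing to the inverse limit gives the desired functional equation with $w = \epsilon$.

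The main obstacle is the careful tracking of the $p$-stabilisation under conjugation: one must verify that the $\alpha$-eigencomponent remains stable under $\iota$, i.e.\ that the $U_p$-action on the ordinary stabilisation is compatible with the action of $\tau$ on Gross points of $p$-power conductor. Here the non-anomalous hypothesis in Assumption \ref{assu:non-triviality-bipartite} is essential: it guarantees that the two roots of $X^2 - a_p(E) X + p$ are distinct modulo $p$, so that the $\alpha$-component is canonically defined and is preserved by $\tau$. Aside from this point, the argument is entirely analogous to the functional equation of the unstabilised Bertolini--Darmon theta element derived in \cite{bertolini-darmon-imc-2005} (see also \cite{chida-hsieh-main-conj}), which may either be cited directly or reproduced via the short computation sketched above.
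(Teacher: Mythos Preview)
Your approach is essentially the same as the paper's: the paper simply cites \cite[Lemma 1.5]{bertolini-darmon-imc-2005} for the functional equation $\lambda^{\mathrm{bip}, \alpha, \infty, \iota}_1 = \pm \lambda^{\mathrm{bip}, \alpha, \infty}_1$, and you sketch the underlying computation (complex conjugation on Gross points realized via Atkin--Lehner) before pointing to the same reference. One minor remark: the non-anomalous hypothesis is not actually needed for the functional equation itself---the $\alpha$-stabilization is already canonically defined over $\mathbb{Z}_p$ since $\alpha \neq \beta$ in characteristic zero---so that part of your discussion can be dropped.
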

\begin{proof}
See \cite[Lemma 1.5]{bertolini-darmon-imc-2005}.
\end{proof}

\begin{thm} \label{thm:non-triviality-bipartite}
We keep all the assumptions in $\S$\ref{subsec:working-hypotheses} with odd $\nu(N^-)$ and 
 assume that $E$ has good ordinary reduction at $p$.
Let $\mathfrak{P}$ be a height one prime of $\Lambda^{\mathrm{ac}}$.
Then the following statements are equivalent.
\begin{enumerate}
\item  $ \lambdab^{\mathrm{bip}, \alpha, \infty}$ is non-trivial modulo $\mathfrak{P}$.
\item $2 \cdot \mathrm{ord}_{\mathfrak{P}} \left( \lambda^{\mathrm{bip}, \alpha, \infty}_1 \right) = \mathrm{ord}_{\mathfrak{P}} \left( \mathrm{char}_{\Lambda^{\mathrm{ac}}} \left( \mathrm{Sel}(K_\infty, E[p^\infty])^\vee \right) \right)$.
\end{enumerate}
\end{thm}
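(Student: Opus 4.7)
The plan is to follow the proof of Theorem \ref{thm:non-triviality-heegner} essentially verbatim, substituting Bertolini--Darmon's divisibility (Theorem \ref{thm:bertolini-darmon-divisibility}) together with Lemma \ref{lem:theta-involution} for Howard's divisibility, and the bipartite structure theorem (Theorem \ref{thm:structure-pk-bipartite}, as packaged in Theorem \ref{thm:structure-bipartite}) for Kolyvagin's structure theorem. The role of $\kappa^{\mathrm{Heeg},\alpha,\infty}_1$ will be played by $\lambda^{\mathrm{bip},\alpha,\infty}_1$, and the square in statement (2) reflects the fact that Theorem \ref{thm:bertolini-darmon-divisibility} involves the product $\lambda^{\mathrm{bip},\alpha,\infty}_1 \cdot \lambda^{\mathrm{bip},\alpha,\infty,\iota}_1$, which by Lemma \ref{lem:theta-involution} has the same $\mathfrak{P}$-valuation as $(\lambda^{\mathrm{bip},\alpha,\infty}_1)^2$.

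First, I would construct an approximating sequence $\mathfrak{P}_M$ exactly as in the Heegner case: put $\mathfrak{P}_M = (g + p^M)\Lambda^{\mathrm{ac}}$ when $\mathfrak{P} = (g) \neq p\Lambda^{\mathrm{ac}}$, and $\mathfrak{P}_M = (X^M + p)\Lambda^{\mathrm{ac}}$ when $\mathfrak{P} = p\Lambda^{\mathrm{ac}}$. For $M$ sufficiently large, $\Lambda^{\mathrm{ac}}/\mathfrak{P}_M \simeq \Lambda^{\mathrm{ac}}/\mathfrak{P}$ as rings, $(\mathfrak{P}_M, \mathfrak{P}^n) = (\mathfrak{P}_M, p^{Mn})$, and $\mathfrak{P}_M$ is coprime both to a fixed generator of $\mathrm{char}_{\Lambda^{\mathrm{ac}}}(\mathrm{Sel}(K_\infty, E[p^\infty])^\vee)$ and to $\lambda^{\mathrm{bip},\alpha,\infty}_1$ (the latter by Theorem \ref{thm:bertolini-darmon-divisibility} and Lemma \ref{lem:theta-involution}). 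In particular, $\lambda^{\mathrm{bip},\alpha,(\mathfrak{P}_M)}_1 \neq 0$ and $\mathrm{ord}(\lambdab^{\mathrm{bip},\alpha,(\mathfrak{P}_M)}) = 0$. Write $S_{\mathfrak{P}_M}$ for the integral closure of $\Lambda^{\mathrm{ac}}/\mathfrak{P}_M$.

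Second, I would compute $\mathrm{length}_{\mathbb{Z}_p}(\mathrm{Sel}(K, (T_{\mathfrak{P}_M})^*)_{/\mathrm{div}})$ in two ways, both up to $O(1)$ as $M$ varies. Howard's control theorem \cite[Proposition 2.2.8]{howard-kolyvagin} together with the identity $(\mathfrak{P}_M, \mathfrak{P}^n) = (\mathfrak{P}_M, p^{Mn})$ gives the Iwasawa-theoretic expression
$$
\mathrm{length}_{\mathbb{Z}_p}\left( \mathrm{Sel}(K, (T_{\mathfrak{P}_M})^*)_{/\mathrm{div}} \right) = M \cdot \mathrm{rk}_{\mathbb{Z}_p}(\Lambda^{\mathrm{ac}}/\mathfrak{P}_M) \cdot \mathrm{ord}_{\mathfrak{P}}\left( \mathrm{char}_{\Lambda^{\mathrm{ac}}}(\mathrm{Sel}(K_\infty, E[p^\infty])^\vee) \right).
$$
Applying Theorem \ref{thm:structure-pk-bipartite} to the specialization $\lambdab^{\mathrm{bip},\alpha,(\mathfrak{P}_M)}$ (via the limit argument of Theorem \ref{thm:structure-bipartite}) and using $\mathrm{ord}(\lambdab^{\mathrm{bip},\alpha,(\mathfrak{P}_M)}) = 0$ yields
$$
\mathrm{length}_{S_{\mathfrak{P}_M}}\left( \mathrm{Sel}(K, (T_{\mathfrak{P}_M})^*)_{/\mathrm{div}} \right) = 2 \left( \partial^{(0)}(\lambdab^{\mathrm{bip},\alpha,(\mathfrak{P}_M)}) - j_{\mathfrak{P}_M} \right)
$$
with $j_{\mathfrak{P}_M} := \partial^{(\infty)}(\lambdab^{\mathrm{bip},\alpha,(\mathfrak{P}_M)})$, while the same control argument applied to $\lambda^{\mathrm{bip},\alpha,\infty}_1 \in \Lambda^{\mathrm{ac}}$ gives
$$
\tfrac{\mathrm{rk}_{\mathbb{Z}_p}(S_{\mathfrak{P}_M})}{e(S_{\mathfrak{P}_M}/\mathbb{Z}_p)} \cdot \partial^{(0)}(\lambdab^{\mathrm{bip},\alpha,(\mathfrak{P}_M)}) = M \cdot \mathrm{rk}_{\mathbb{Z}_p}(\Lambda^{\mathrm{ac}}/\mathfrak{P}_M) \cdot \mathrm{ord}_{\mathfrak{P}}(\lambda^{\mathrm{bip},\alpha,\infty}_1).
$$
Combining these three displays expresses $\mathrm{length}_{\mathbb{Z}_p}(\mathrm{Sel}(K, (T_{\mathfrak{P}_M})^*)_{/\mathrm{div}})$ as a $\mathbb{Z}_p$-linear combination of $M \cdot \mathrm{ord}_{\mathfrak{P}}(\lambda^{\mathrm{bip},\alpha,\infty}_1)$ and $j_{\mathfrak{P}_M}$.

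Finally, I would conclude by the limit argument of Theorem \ref{thm:non-triviality-heegner}: statement (1) asserts $\partial^{(\infty)}(\lambdab^{\mathrm{bip},\alpha,(\mathfrak{P})}) < \infty$, which via the mod-$p^M$ congruences $\lambdab^{\mathrm{bip},\alpha,(\mathfrak{P})} \equiv \lambdab^{\mathrm{bip},\alpha,(\mathfrak{P}_M)}$ is equivalent to $j_{\mathfrak{P}_M}$ being bounded as $M \to \infty$, and matching the $O(M)$ leading terms in the two expressions for $\mathrm{length}_{\mathbb{Z}_p}(\mathrm{Sel}(K, (T_{\mathfrak{P}_M})^*)_{/\mathrm{div}})$ renders this boundedness equivalent to $2 \cdot \mathrm{ord}_{\mathfrak{P}}(\lambda^{\mathrm{bip},\alpha,\infty}_1) = \mathrm{ord}_{\mathfrak{P}}(\mathrm{char}_{\Lambda^{\mathrm{ac}}}(\mathrm{Sel}(K_\infty, E[p^\infty])^\vee))$, which is (2). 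The main obstacle will be justifying Theorem \ref{thm:structure-pk-bipartite} over the Artinian local coefficient rings $S_{\mathfrak{P}_M}/\mathfrak{m}^k_{S_{\mathfrak{P}_M}}$ in place of $\mathbb{Z}/p^k\mathbb{Z}$: one must check that the induced classical Selmer structure on $T_{\mathfrak{P}_M} \otimes S_{\mathfrak{P}_M}/\mathfrak{m}^k_{S_{\mathfrak{P}_M}}$ is self-dual and cartesian, that Assumption \ref{assu:chebotarev-bipartite} persists for the corresponding admissible primes (both reducing to absolute irreducibility of $E[p]$ as in \cite[\S3.3]{howard-bipartite} together with Assumption \ref{assu:non-triviality-bipartite}), and that the mod-$\mathfrak{P}^k_M$ specialization of the $\Lambda^{\mathrm{ac}}$-adic pair $(\lambdab^{\mathrm{bip},\alpha,\infty}, \ks^{\mathrm{bip},\alpha,\infty})$ yields a \emph{free} bipartite Euler system, which should follow from the constructions of \cite{bertolini-darmon-imc-2005, chida-hsieh-main-conj, pw-mu}.
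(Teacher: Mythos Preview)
Your proposal is correct and follows essentially the same approach as the paper. Two minor points: the paper observes that (1) $\Rightarrow$ (2) is already \cite[Theorem 3.2.3.(c)]{howard-bipartite} and only argues (2) $\Rightarrow$ (1); and the control theorem and the self-dual Selmer structure variant you flag as the ``main obstacle'' are cited from \cite[Proposition 3.3.1 and \S2.6]{howard-bipartite} rather than \cite{howard-kolyvagin}, since in the definite sign the Selmer group over $K_\infty$ is cotorsion (so no $/\mathrm{div}$ is needed) and the bipartite framework already supplies the required Artinian-coefficient formalism.
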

By Lemma \ref{lem:theta-involution}, the second statement is exactly the localization of anticyclotomic main conjecture for elliptic curves at $\mathfrak{P}$.
\begin{proof}
The direction (1) $\Rightarrow$ (2) is \cite[Theorem 3.2.3.(c)]{howard-bipartite}, so we focus on the converse.
Like the proof of Theorem \ref{thm:non-triviality-heegner}, our argument mainly follows \cite[$\S$3.4]{howard-bipartite} again but with some modification.

We first suppose that $\mathfrak{P} \neq p\Lambda^{\mathrm{ac}}$ and write $\mathfrak{P} = (g)$ where $g \in \Lambda^{\mathrm{ac}}$.
Define $\mathfrak{P}_M  = (g+p^M)\Lambda^{\mathrm{ac}}$.
By taking $M$ sufficiently large, we may assume the following statement.
\begin{itemize}
\item $\mathfrak{P}_M$ is a height one prime ideal of  $\Lambda^{\mathrm{ac}}$.
\item $\Lambda^{\mathrm{ac}} / \mathfrak{P}_M \simeq \Lambda^{\mathrm{ac}} / \mathfrak{P}$ as rings.
\item $\mathfrak{P}_M$ and $\lambda^{\mathrm{bip}, \alpha, \infty}_1$ are relatively prime, so $\lambda^{\mathrm{bip}, \alpha, (\mathfrak{P}_M)}_1 \neq 0$.
\item The natural map
$$\mathrm{Sel}(K, ( T_{\mathfrak{P}_M} )^* )  \to \mathrm{Sel}(K_\infty, E[p^\infty])[\mathfrak{P}_M]$$
has finite kernel and cokernel which are bounded by a constant depending only on $[S_{\mathfrak{P}_M} : \Lambda^{\mathrm{ac}} / \mathfrak{P}_M]$
but not on $\mathfrak{P}_M$ itself \cite[Proposition 3.3.1]{howard-bipartite}.
\end{itemize}
Following the argument of \cite[Theorem 5.3.10]{mazur-rubin-book}, we obtain
$$\mathrm{length}_{\mathbb{Z}_p}  \left( 
\mathrm{Sel}(K, ( T_{\mathfrak{P}_M} )^* )
\right) = M \cdot \mathrm{rk}_{\mathbb{Z}_p} S_{\mathfrak{P}_M} \cdot \mathrm{ord}_{\mathfrak{P}} \left( \mathrm{Sel}(K_\infty, E[p^\infty])^\vee \right) $$
up to $O(1)$ as $M$ varies.
We also have
$$\mathrm{length}_{\mathbb{Z}_p}  \left( 
S_{\mathfrak{P}_M} / S_{\mathfrak{P}_M} \cdot \lambda^{\mathrm{bip}, \alpha, (\mathfrak{P}_M)}_1
\right)
= M \cdot \mathrm{rk}_{\mathbb{Z}_p} S_{\mathfrak{P}_M}  \cdot 
\mathrm{ord}_{\mathfrak{P}} \left( \lambda^{\mathrm{bip}, \alpha, \infty}_1 \right)$$
up to $O(1)$ as $M$ varies.

For any pair of positive integers $k \leq j$, set
$$\delta_{\mathfrak{P}_M}(k,j) = \mathrm{min} \left\lbrace \mathrm{ind} (\lambda^{\mathrm{bip}, \alpha, (\mathfrak{P}_M) }_n , S_{\mathfrak{P}_M}/p^k S_{\mathfrak{P}_M}) : n \in \mathcal{N}^{\mathrm{def}}_j \right\rbrace  .$$
Since $\lambda^{\mathrm{bip}, \alpha, (\mathfrak{P}_M) }_1 \neq 0$ in $S_{\mathfrak{P}_M}$, $\delta_{\mathfrak{P}_M}(k,j) < \infty$ for sufficiently large $k$. 
As $\delta_{\mathfrak{P}_M}(k,j) \leq \delta_{\mathfrak{P}_M}(k,j+1)$, we define
$\delta_{\mathfrak{P}_M}(k) = \lim_{j \to \infty} \delta_{\mathfrak{P}_M}(k,j) $.
By applying \cite[Proposition 3.3.3]{howard-bipartite}, we have
$$ \mathrm{length}_{S_{\mathfrak{P}_M}}  \left( \mathrm{Sel}(K, ( T_{\mathfrak{P}_M} )^* ) \right) =  2 \cdot \left( \partial^{(0)} ( \lambdab^{\mathrm{bip}, \alpha, (\mathfrak{P}_M)} ) -\delta_{\mathfrak{P}_M}(k) \right) $$
for every $k \geq 1$ such that $\lambda^{\mathrm{bip}, \alpha, (\mathfrak{P}_M) }_1 \neq 0$ in $S_{\mathfrak{P}_M}/ p^k S_{\mathfrak{P}_M}$.
In particular, $\delta_{\mathfrak{P}_M}(k)$ is independent of $k$ if $k$ is sufficiently large.
For such an integer $k$, we have
\begin{align*}
& M \cdot \mathrm{rk}_{\mathbb{Z}_p} S_{\mathfrak{P}_M} \cdot \mathrm{ord}_{\mathfrak{P}} \left( \mathrm{Sel}(K_\infty, E[p^\infty])^\vee \right) \\
& = \mathrm{length}_{\mathbb{Z}_p}  \left(  \mathrm{Sel}(K, ( T_{\mathfrak{P}_M} )^* ) \right) \\
& = \dfrac{\mathrm{rk}_{\mathbb{Z}_p}  ( S_{\mathfrak{P}_M} ) }{e(S_{\mathfrak{P}_M} / \mathbb{Z}_p)} \cdot  \mathrm{length}_{S_{\mathfrak{P}_M}}  \left( \mathrm{Sel}(K, ( T_{\mathfrak{P}_M} )^* ) \right) \\
& = \dfrac{\mathrm{rk}_{\mathbb{Z}_p}  ( S_{\mathfrak{P}_M} ) }{e(S_{\mathfrak{P}_M} / \mathbb{Z}_p)} \cdot  2 \cdot \left( \partial^{(0)} ( \lambdab^{\mathrm{bip}, \alpha, (\mathfrak{P}_M)} ) - \delta_{\mathfrak{P}_M}(k) \right) \\
& \leq 2 \cdot \dfrac{\mathrm{rk}_{\mathbb{Z}_p}  ( S_{\mathfrak{P}_M} ) }{e(S_{\mathfrak{P}_M} / \mathbb{Z}_p)} \cdot \partial^{(0)} ( \lambdab^{\mathrm{bip}, \alpha, (\mathfrak{P}_M)} ) \\
& = 2 \cdot \dfrac{\mathrm{rk}_{\mathbb{Z}_p}  ( S_{\mathfrak{P}_M} ) }{e(S_{\mathfrak{P}_M} / \mathbb{Z}_p)} \cdot  \mathrm{length}_{S_{\mathfrak{P}_M}}  \left(  S_{\mathfrak{P}_M} / S_{\mathfrak{P}_M} \cdot \lambda^{\mathrm{bip}, \alpha, (\mathfrak{P}_M)}_1 \right) \\
& = 2 \cdot \mathrm{length}_{\mathbb{Z}_p}  \left( S_{\mathfrak{P}_M} / S_{\mathfrak{P}_M} \cdot \lambda^{\mathrm{bip}, \alpha, (\mathfrak{P}_M)}_1 \right) \\
& = 2 \cdot  M \cdot \mathrm{rk}_{\mathbb{Z}_p} S_{\mathfrak{P}_M}  \cdot \mathrm{ord}_{\mathfrak{P}} \left( \lambda^{\mathrm{bip}, \alpha, \infty}_1 \right)
\end{align*}
up to $O(1)$ as $M$ varies, and the inequality in the middle is an equality if and only if $\delta_{\mathfrak{P}_M}(k)$ is a constant as $M$ varies.

If we assume $2 \cdot \mathrm{ord}_{\mathfrak{P}} \left( \lambda^{\mathrm{bip}, \alpha, \infty}_1 \right) = \mathrm{ord}_{\mathfrak{P}} \left( \mathrm{char}_{\Lambda^{\mathrm{ac}}} \left( \mathrm{Sel}(K_\infty, E[p^\infty])^\vee \right) \right)$, then
the inequality in the middle becomes an equality, so $\delta_{\mathfrak{P}_M}(k)$ is a constant as $M$ varies.
Since
$$\lambdab^{\mathrm{bip}, \alpha, (\mathfrak{P})} \equiv \lambdab^{\mathrm{bip}, \alpha, (\mathfrak{P}_M)} \pmod{p^M} ,$$
we have $\delta_{\mathfrak{P}}(k) = \delta_{\mathfrak{P}_M}(k) <\infty$ by taking $M > k$.
Hence, $\lambdab^{\mathrm{bip}, \alpha, (\mathfrak{P})}$ is also non-trivial,
When $\mathfrak{P} = p\Lambda^{\mathrm{ac}}$, the same argument works by taking $\mathfrak{P}_M = X^M + p$.
\end{proof}

\begin{cor} \label{cor:non-triviality-bipartite}
We keep all the assumptions in $\S$\ref{subsec:working-hypotheses} with odd $\nu(N^-)$ and 
 assume that $E$ has good ordinary reduction at $p$.
The anticyclotomic main conjecture localized at $X\Lambda^{\mathrm{ac}}$ holds if and only if $\lambdab^{\mathrm{bip}}$ is non-trivial.
\end{cor}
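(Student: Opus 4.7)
The plan is to specialize Theorem \ref{thm:non-triviality-bipartite} at the height one prime ideal $\mathfrak{P} = X\Lambda^{\mathrm{ac}}$, which corresponds to the trivial character of $\mathrm{Gal}(K_\infty/K)$, and then translate from the $p$-stabilized $\Lambda^{\mathrm{ac}}$-adic setting down to the bipartite Euler system over $K$ itself. This is the direct bipartite analogue of the argument used in Corollary \ref{cor:non-triviality-heegner}.(1) for $\ks^{\mathrm{Heeg}}$.

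First I would observe that the hypothesis of the corollary is exactly statement (2) of Theorem \ref{thm:non-triviality-bipartite} at $\mathfrak{P} = X\Lambda^{\mathrm{ac}}$: invoking Lemma \ref{lem:theta-involution} to identify $(\lambda^{\mathrm{bip},\alpha,\infty,\iota}_1) = (\lambda^{\mathrm{bip},\alpha,\infty}_1)$, the localization of the anticyclotomic main conjecture at $X\Lambda^{\mathrm{ac}}$ becomes
\[
2 \cdot \mathrm{ord}_{X\Lambda^{\mathrm{ac}}} \left( \lambda^{\mathrm{bip}, \alpha, \infty}_1 \right) = \mathrm{ord}_{X\Lambda^{\mathrm{ac}}} \left( \mathrm{char}_{\Lambda^{\mathrm{ac}}} \left( \mathrm{Sel}(K_\infty, E[p^\infty])^\vee \right) \right).
\]
Applying Theorem \ref{thm:non-triviality-bipartite}, I conclude that $\lambdab^{\mathrm{bip},\alpha,\infty}$ is non-trivial modulo $X\Lambda^{\mathrm{ac}}$.

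Next I would descend to finite level. The mod-$X\Lambda^{\mathrm{ac}}$ specialization of the $\Lambda^{\mathrm{ac}}$-adic bipartite Euler system is, by construction, the $p$-stabilized bipartite Euler system for $(T, \mathcal{F}_{\mathrm{cl}}, \mathcal{P}^{\mathrm{adm}}_1)$ over $K$; call its definite part $\lambdab^{\mathrm{bip}, \alpha}$. Non-triviality modulo $X\Lambda^{\mathrm{ac}}$ therefore gives non-triviality of $\lambdab^{\mathrm{bip}, \alpha}$. Finally, I would remove the $p$-stabilization: Assumption \ref{assu:non-triviality-bipartite}(2)-(3) forces $a_p(E) \not\equiv 1 \pmod{p}$ in the split case and $a_p(E) \not\equiv \pm 1 \pmod{p}$ in the inert case, so the unit root $\alpha$ of $X^2 - a_p(E)X + p$ satisfies that the $p$-stabilization factor at $p$ is a $p$-adic unit. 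Hence the divisibilities of $\lambdab^{\mathrm{bip}, \alpha}$ and $\lambdab^{\mathrm{bip}}$ coincide term by term, exactly as in the sketch at the end of the proof of Corollary \ref{cor:non-triviality-kn} and in Corollary \ref{cor:non-triviality-heegner}.(1), and non-triviality of $\lambdab^{\mathrm{bip}}$ follows.

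I do not expect any serious obstacle: all three steps are formal once Theorem \ref{thm:non-triviality-bipartite} is in hand. The only place requiring genuine care is the last step, where one must verify that passing from $\alpha$-stabilized theta elements to unstabilized ones preserves the vanishing pattern along the family indexed by $\mathcal{N}^{\mathrm{def}}_1$; this is controlled precisely by the non-anomalous hypothesis, which was included in Assumption \ref{assu:non-triviality-bipartite} for exactly this purpose.
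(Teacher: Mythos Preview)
Your proposal is correct and follows essentially the same approach as the paper. In fact the paper gives no separate proof for this corollary at all: the passage from the $\alpha$-stabilized system to the unstabilized one via the non-anomalous hypothesis is already recorded at the end of the proof of Theorem \ref{thm:non-triviality-bipartite}, so the corollary is meant to be read as the immediate specialization of that theorem at $\mathfrak{P} = X\Lambda^{\mathrm{ac}}$, exactly as you carry out.
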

\begin{proof}
We have
\begin{align*}
\lambdab^{\mathrm{bip}, \alpha} & = \left\lbrace \lambda^{\mathrm{bip}, \alpha}_n  \in \mathbb{Z}_p/I^{\mathrm{adm}} \mathbb{Z}_p : n \in \mathcal{N}^{\mathrm{def}}_1 \right\rbrace \\
& = \left\lbrace C'_{E,p} \cdot \lambda^{\mathrm{bip}}_n  \in \mathbb{Z}_p/I^{\mathrm{adm}} \mathbb{Z}_p : n \in \mathcal{N}^{\mathrm{def}}_1 \right\rbrace \\
& = C'_{E,p} \cdot \lambdab^{\mathrm{bip}}
\end{align*}
where $C'_{E,p} \in \mathbb{Z}_p$ is the constant depending only on $E$ and the splitting behavior of $p$ in $K/\mathbb{Q}$.
If $E$ is non-anomalous at $p$, then $C'_{E,p}$ is a $p$-adic unit.
Since $\partial^{(\infty)}( \lambdab^{\mathrm{bip}, \alpha} ) = \partial^{(\infty)}( \lambdab^{\mathrm{bip}}  ) + \mathrm{ord}_p(C'_{E,p})$,
 the non-trivialities of $\lambdab^{\mathrm{bip}}$ and $\lambdab^{\mathrm{bip}, \alpha}$ are equivalent.
The conclusion follows from Theorem \ref{thm:non-triviality-bipartite}.
\end{proof}

\begin{rem}
\begin{enumerate}
\item Theorem \ref{thm:non-triviality-bipartite} refines \cite[Theorem 3.2.3.(c)]{howard-bipartite}. 
In particular, under our working hypotheses, if $E$ is non-anomalous at $p$  and $\lambdab^{\mathrm{bip}}$ is non-trivial modulo $p$, then the anticyclotomic main conjecture for elliptic curves holds.
\item Remark \ref{rem:supersingular} applies to the bipartite Euler system setting equally.
\end{enumerate}
\end{rem}

\appendix

\section{Construction of the Gross point of conductor 1} \label{sec:gross-points}
We quickly review the construction of the Gross point of conductor 1, which is used for  the construction of $\lambda^{\mathrm{bip}}_n$ in $\S$\ref{sec:bipartite-euler-systems}. Some notation might be slightly different from those in the main text. See  \cite{chida-hsieh-main-conj, chida-hsieh-p-adic-L-functions, kim-overconvergent} for details.

Let $K$ be the imaginary quadratic field of discriminant $-D_K <0$.
Define
$$\vartheta := 
\left \lbrace
    \begin{array}{ll}
     \dfrac{ D_K - \sqrt{-D_K} }{ 2 }  & \textrm{ if } 2 \nmid D_K \\ 
\dfrac{ D_K - 2\sqrt{-D_K} }{ 4 }  & \textrm{ if } 2 \mid D_K
    \end{array}
    \right.
$$
 so that
$\mathcal{O}_K = \mathbb{Z} + \mathbb{Z}\vartheta$.
Let $B_{nN^-}$ be the definite quaternion algebra over $\mathbb{Q}$ of discriminant $nN^-$.
Then there exists an embedding of $K$ into $B_{nN^-}$ \cite{vigneras}.
More explicitly, we choose a $K$-basis $(1,J)$ of $B_{nN^-}$ so that $B_{nN^-} = K \oplus K \cdot J$ such that
$\beta := J^2 \in \mathbb{Q}^\times$ with $\beta <0$, $J \cdot t = \overline{t} \cdot J$ for all $t \in K$, $\beta \in \left( \mathbb{Z}^\times_q \right)^2$ for all $q \mid pN^+$, and
$\beta \in \mathbb{Z}^\times_q$ for all $q \mid D_K$.
Fix a square root $\sqrt{\beta} \in \overline{\mathbb{Q}}$ of $\beta$.
Fix an isomorphism
$$i := \prod i_q : \widehat{B}^{(nN^-)}_{nN^-} \simeq \mathrm{M}_2(\mathbb{A}^{(nN^-\infty)})$$
as follows.
\begin{itemize}
\item For each finite place $q \mid N^+p$, the isomorphism
$i_q : B_{nN^-,q} \simeq \mathrm{M}_2(\mathbb{Q}_q)$ is defined by
\[
\xymatrix{
{
i_q(\vartheta)  = \left( \begin{matrix}
\mathrm{trd}(\vartheta) & - \mathrm{nrd}(\vartheta) \\
1 & 0
\end{matrix} \right)  }, & 
{
i_q(J)  = \sqrt{\beta} \cdot \left( \begin{matrix}
-1 & \mathrm{trd}(\vartheta) \\
0 & 1
\end{matrix} \right) 
}
}
\]
where $\mathrm{trd}$ and $\mathrm{nrd}$ are the reduced trace and the reduced norm on $B$, respectively.
\item For each finite place $q \nmid pN^+$, the isomorphism
$i_q : B_{nN^-,q} \simeq \mathrm{M}_2(\mathbb{Q}_q)$ is chosen so that
$i_q \left( \mathcal{O}_K \otimes \mathbb{Z}_q  \right) \subseteq \mathrm{M}_2(\mathbb{Z}_q) $.
\end{itemize}
Under the fixed isomorphism $i$, for any rational prime $q$, the local Gross point $\varsigma_q \in B^\times_{nN^-,q}$ is defined as follows:
\begin{itemize}
\item $\varsigma_q := 1$
in $B^\times_{nN^-,q}$ for $q \nmid pN^+$.
\item $\varsigma_q := \frac{1}{\sqrt{D_K}}\cdot \left( \begin{matrix}
\vartheta & \overline{\vartheta} \\
1 & 1
\end{matrix} \right) \in \mathrm{GL}_2(K_\mathfrak{q}) = \mathrm{GL}_2(\mathbb{Q}_q) $
for $q \mid N^+$ with $q = \mathfrak{q} \overline{\mathfrak{q}}$ in $\mathcal{O}_K$.
\item 
$\varsigma^{(0)}_p = \left( \begin{matrix}
\vartheta & -1 \\
1 & 0
\end{matrix} \right)
 \in \mathrm{GL}_2(K_\mathfrak{p}) = \mathrm{GL}_2( \mathbb{Q}_{p} )$ when $p = \mathfrak{p}\overline{\mathfrak{p}}$ splits in $K$.
\item $\varsigma^{(0)}_p = \left( \begin{matrix}
0 & 1 \\
-1 & 0
\end{matrix} \right)
 \in \mathrm{GL}_2(K_p) = \mathrm{GL}_2( \mathbb{Q}_{p^2} )$ when $p$ is inert in $K$.
\end{itemize}
The \textbf{Gross point $\varsigma^{(0)}$ of conductor $1$ on $\widehat{B}^\times_{nN^-}$} is defined by
$$\varsigma^{(0)} := \varsigma^{(0)}_p \times \prod_{q \neq p} \varsigma_q \in \widehat{B}^\times_{nN^-} .$$
The action of $\widehat{K}^\times$ on $\varsigma^{(0)}$ is defined by left translation with embedding $K \hookrightarrow B$.

\bibliographystyle{amsalpha}
\bibliography{library}

\providecommand{\bysame}{\leavevmode\hbox to3em{\hrulefill}\thinspace}
\providecommand{\MR}{\relax\ifhmode\unskip\space\fi MR }
% \MRhref is called by the amsart/book/proc definition of \MR.
\providecommand{\MRhref}[2]{%
  \href{http://www.ams.org/mathscinet-getitem?mr=#1}{#2}
}
\providecommand{\href}[2]{#2}
\begin{thebibliography}{BCDT01}

\bibitem[BCDT01]{bcdt}
C.~Breuil, B.~Conrad, F.~Diamond, and R.~Taylor, \emph{On the modularity of
  elliptic curves over {$\mathbb{Q}$}: wild 3-adic exercises}, J. Amer. Math.
  Soc. \textbf{14} (2001), no.~4, 843--939.

\bibitem[BCGS]{burungale-castella-grossi-skinner-indivisibility}
A.~A. Burungale, F.~Castella, G.~Grossi, and C.~Skinner, \emph{Indivisibility
  of {K}olyvagin systems and {I}wasawa theory}, preprint.

\bibitem[BCK21]{burungale-castella-kim}
A.~A. Burungale, F.~Castella, and C.-H. Kim, \emph{A proof of {P}errin-{R}iou's
  {H}eegner point main conjecture}, Algebra Number Theory \textbf{15} (2021),
  no.~10, 1627--1653.

\bibitem[BD05]{bertolini-darmon-imc-2005}
M.~Bertolini and H.~Darmon, \emph{Iwasawa's main conjectures for elliptic
  curves over anticyclotomic {$\mathbb{Z}_p$}-extensions}, Ann. of Math. (2)
  \textbf{162} (2005), no.~1, 1--64.

\bibitem[BDV22]{bertolini-darmon-venerucci}
M.~Bertolini, H.~Darmon, and R.~Venerucci, \emph{Heegner points and
  {B}eilinson--{K}ato elements: a conjecture of {P}errin-{R}iou}, Adv. Math.
  \textbf{398} (2022), 108172.

\bibitem[BFH90]{bump-friedberg-hoffstein}
D.~Bump, S.~Friedberg, and J.~Hoffstein, \emph{Nonvanishing theorems for
  {$L$}-functions of modular forms and their derivatives}, Invent. Math.
  \textbf{102} (1990), no.~3, 543--618.

\bibitem[BPS]{kazim-pollack-sasaki}
K.~B{\"{u}}y{\"{u}}kboduk, R.~Pollack, and S.~Sasaki, \emph{{$p$}-adic
  {G}ross--{Z}agier formula at critical slope and a conjecture of
  {P}errin-{R}iou}, preprint,
  \href{https://arxiv.org/abs/1811.08216}{arXiv:1811.08216}.

\bibitem[BSTW]{burungale-skinner-tian-wan}
A.~A. Burungale, C.~Skinner, Y.~Tian, and X.~Wan, \emph{Zeta elements for
  elliptic curves and application}, in preparation.

\bibitem[BT20]{burungale-tian-p-converse}
A.~A. Burungale and Y.~Tian, \emph{{$p$}-converse to a theorem of
  {G}ross--{Z}agier, {K}olyvagin and {R}ubin}, Invent. Math. \textbf{220}
  (2020), no.~1, 211--253.

\bibitem[B{\"{u}}y09]{kazim-tamagawa}
K.~B{\"{u}}y{\"{u}}kboduk, \emph{Tamagawa defect of {E}uler systems}, J. Number
  Theory \textbf{129} (2009), no.~2, 402--417.

\bibitem[CH15]{chida-hsieh-main-conj}
M.~Chida and M.-L. Hsieh, \emph{On the anticyclotomic {I}wasawa main conjecture
  for modular forms}, Compos. Math. \textbf{151} (2015), no.~5, 863--897.

\bibitem[CH18]{chida-hsieh-p-adic-L-functions}
\bysame, \emph{Special values of anticyclotomic {$L$}-functions for modular
  forms}, J. Reine Angew. Math. \textbf{741} (2018), 87--131.

\bibitem[Cor02]{cornut-higher-heegner-points}
C.~Cornut, \emph{Mazur's conjecture on higher {H}eegner points}, Invent. Math.
  \textbf{148} (2002), no.~3, 495--523.

\bibitem[CW22]{castella-wan-imc-derivatives}
F.~Castella and X.~Wan, \emph{The {I}wasawa main conjectures for
  {$\mathrm{GL}_2$} and derivatives of {$p$}-adic {$L$}-functions}, Adv. Math.
  \textbf{400} (2022), no.~108266, 45 pages.

\bibitem[GZ86]{gross-zagier-original}
B.~Gross and D.~Zagier, \emph{Heegner points and derivatives of {$L$}-series},
  Invent. Math. \textbf{84} (1986), no.~2, 225--320.

\bibitem[How04]{howard-kolyvagin}
B.~Howard, \emph{The {H}eegner point {K}olyvagin system}, Compos. Math.
  \textbf{140} (2004), no.~6, 1439--1472.

\bibitem[How06]{howard-bipartite}
\bysame, \emph{Bipartite {E}uler systems}, J. Reine Angew. Math. \textbf{597}
  (2006), 1--25.

\bibitem[Jet08]{jetchev-global-divisibility}
D.~Jetchev, \emph{Global divisibility of {H}eegner points and {T}amagawa
  numbers}, Compositio Math. \textbf{144} (2008), 811--826.

\bibitem[JSW17]{jetchev-skinner-wan}
D.~Jetchev, C.~Skinner, and X.~Wan, \emph{The {B}irch and {S}winnerton-{D}yer
  formula for elliptic curves of analytic rank one}, Camb. J. Math. \textbf{5}
  (2017), no.~3, 369--434.

\bibitem[Kat04]{kato-euler-systems}
K.~Kato, \emph{{$p$}-adic {H}odge theory and values of zeta functions of
  modular forms}, Ast\'{e}risque \textbf{295} (2004), 117--290.

\bibitem[Kim]{kim-structure-selmer}
C.-H. Kim, \emph{The structure of {S}elmer groups and the {I}wasawa main
  conjecture for elliptic curves}, submitted,
  \href{https://arxiv.org/abs/2203.12159}{arXiv:2203.12159}.

\bibitem[Kim19]{kim-overconvergent}
\bysame, \emph{Overconvergent quaternionic forms and anticyclotomic {$p$}-adic
  {$L$}-functions}, Publ. Mat. \textbf{63} (2019), no.~2, 727--767.

\bibitem[Kol90]{kolyvagin-euler-systems}
V.~Kolyvagin, \emph{Euler systems}, The {G}rothendieck {F}estschrift {V}olume
  {II} (P.~Cartier, L.~Illusie, N.~M. Katz, G.~Laumon, Y.~Manin, and K.~A.
  Ribet, eds.), Progr. Math., vol.~87, Birkh\"{a}user {B}oston, 1990,
  pp.~435--483.

\bibitem[Kol91]{kolyvagin-selmer}
\bysame, \emph{On the structure of {S}elmer groups}, Math. Ann. \textbf{291}
  (1991), no.~2, 253--259.

\bibitem[KPW17]{kim-pollack-weston}
C.-H. Kim, R.~Pollack, and T.~Weston, \emph{On the freeness of anticyclotomic
  {S}elmer groups of modular forms}, Int. J. Number Theory \textbf{13} (2017),
  no.~6, 1443--1455.

\bibitem[Kur14a]{kurihara-munster}
M.~Kurihara, \emph{Refined {I}wasawa theory for {$p$}-adic representations and
  the structure of {S}elmer groups}, M{\"{u}}nster J. of Math. \textbf{7}
  (2014), no.~1, 149--223.

\bibitem[Kur14b]{kurihara-iwasawa-2012}
\bysame, \emph{The structure of {S}elmer groups of elliptic curves and modular
  symbols}, Iwasawa Theory 2012: State of the Art and Recent Advances
  (T.~Bouganis and O.~Venjakob, eds.), Contrib. Math. Comput. Sci., vol.~7,
  Springer, 2014, pp.~317--356.

\bibitem[Maz78]{mazur-rational-isogenies}
B.~Mazur, \emph{Rational isogenies of prime degree}, Invent. Math. \textbf{44}
  (1978), 129--162, with an appendix by D. Goldfeld.

\bibitem[MM91]{murty-murty-mean}
M.~R. Murty and V.~K. Murty, \emph{Mean values of derivatives of modular
  {$L$}-series}, Ann. of Math. (2) \textbf{133} (1991), no.~3, 447--475.

\bibitem[MR04]{mazur-rubin-book}
B.~Mazur and K.~Rubin, \emph{{K}olyvagin {S}ystems}, Mem. Amer. Math. Soc.,
  vol. 168, American {M}athematical {S}ociety, March 2004.

\bibitem[MT87]{mazur-tate}
B.~Mazur and J.~Tate, \emph{Refined conjectures of the ``{B}irch and
  {S}winnerton-{D}yer type"}, Duke Math. J. \textbf{54} (1987), no.~2,
  711--750.

\bibitem[Nek07]{nekovar-euler-systems}
J.~Nekov\'{a}\v{r}, \emph{The {E}uler system method for {CM} points on
  {S}himura curves}, {$L$}-functions and {G}alois representations (Cambridge)
  (D.~Burns, K.~Buzzard, and J.~Nekov\'{a}\v{r}, eds.), London Math. Soc.
  Lecture Note Ser., vol. 320, Cambridge University Press, 2007, pp.~471--547.

\bibitem[PR87]{perrin-riou-heegner}
B.~Perrin-Riou, \emph{Fonctions {$L$} {$p$}-adiques, th\'{e}orie d'{I}wasawa et
  points de {H}eegner}, Bull. Soc. Math. France \textbf{115} (1987), no.~4,
  399–--456.

\bibitem[PR93]{perrin-riou-rational-pts}
\bysame, \emph{Fonctions {$L$} {$p$}-adiques d'une courbe elliptique et points
  rationnels}, Ann. Inst. Fourier (Grenoble) \textbf{43} (1993), no.~4,
  945--995.

\bibitem[PW11]{pw-mu}
R.~Pollack and T.~Weston, \emph{On anticyclotomic {$\mu$}-invariants of modular
  forms}, Compos. Math. \textbf{147} (2011), 1353--1381.

\bibitem[RS07]{rubin-silverberg-twists-rank-four}
K.~Rubin and A.~Silverberg, \emph{Twists of elliptic curves of rank at least
  four}, Ranks of elliptic curves and random matrix theory (Cambridge) (J.B.
  Conrey, D.W. Farmer, F.~Mezzadri, and N.C. Snaith, eds.), London Math. Soc.
  Lecture Note Ser., vol. 341, 2007, pp.~177--188.

\bibitem[Sak22]{sakamoto-p-selmer}
R.~Sakamoto, \emph{$p$-{S}elmer groups and modular symbols}, Doc. Math.
  \textbf{27} (2022), 1891--1922.

\bibitem[SU14]{skinner-urban}
C.~Skinner and E.~Urban, \emph{The {I}wasawa main conjectures for
  {$\mathrm{GL}_2$}}, Invent. Math. \textbf{195} (2014), no.~1, 1--277.

\bibitem[SW]{stein-weinstein}
W.~Stein and J.~Weinstein, \emph{Kolyvagin classes on elliptic curves:
  structure, distribution, and algorithms}, unpublished manuscript.

\bibitem[Swe]{sweeting-kolyvagin}
N.~Sweeting, \emph{Kolyvagin's conjecture, bipartite {E}uler systems and higher
  congruences of modular forms}, preprint available at
  \href{https://scholar.harvard.edu/naomisweeting}{the author's webpapge} and
  \href{https://arxiv.org/abs/2012.11771}{arXiv:2012.11771}.

\bibitem[Vat02]{vatsal-uniform}
V.~Vatsal, \emph{Uniform distribution of {H}eegner points}, Invent. Math.
  \textbf{148} (2002), no.~1, 1--48.

\bibitem[Vat03]{vatsal-duke}
\bysame, \emph{Special values of anticyclotomic {$L$}-functions}, Duke Math. J.
  \textbf{116} (2003), no.~2, 219--261.

\bibitem[Vig80]{vigneras}
M.-{F}. Vign\'{e}ras, \emph{Arithm\'{e}tique des alg\`{e}bres de quaternions},
  Lecture Notes in Math., vol. 800, Springer, 1980.

\bibitem[Wal85]{waldspurger}
J.-L. Waldspurger, \emph{Sur les values de certaines fonctions {$L$}
  automorphes en leur centre de symm\'{e}tre}, Compos. Math. \textbf{54}
  (1985), 173--242.

\bibitem[Wan21]{wan-heegner}
X.~Wan, \emph{Heegner point {K}olyvagin system and {I}wasawa main conjecture},
  Acta Math. Sin. (Engl. Ser.) \textbf{37} (2021), no.~1, 104--120.

\bibitem[YZ17]{yun-zhang-higher-gross-zagier}
Z.~Yun and W.~Zhang, \emph{Shtukas and the {T}aylor expansion of
  {$L$}-functions}, Ann. of Math. \textbf{186} (2017), no.~3, 767--911.

\bibitem[YZ19]{yun-zhang-higher-gross-zagier-2}
\bysame, \emph{Shtukas and the {T}aylor expansion of {$L$}-functions {(II)}},
  Ann. of Math. \textbf{189} (2019), no.~2, 393--526.

\bibitem[YZZ13]{yuan-zhang-zhang}
X.~Yuan, S.-W. Zhang, and W.~Zhang, \emph{The {G}ross--{Z}agier formula on
  {S}himura curves}, Ann. of Math. Stud., vol. 184, Princeton {U}niversity
  {P}ress, 2013.

\bibitem[Zan]{zanarella-howard}
M.~Zanarella, \emph{On {H}oward's main conjecture and the {H}eegner point
  {K}olyvagin system}, preprint,
  \href{https://arxiv.org/abs/1908.09197}{arXiv:1908.09197}.

\bibitem[Zha12]{wei-zhang-arith-fund-lem}
W.~Zhang, \emph{On arithmetic fundamental lemmas}, Invent. Math. \textbf{188}
  (2012), 197--252.

\bibitem[Zha14a]{wei-zhang-cdm}
\bysame, \emph{The {B}irch--{S}winnerton-{D}yer conjecture and {H}eegner
  points: a survey}, Current {D}evelopments in {M}athematics, vol. 2013, 2014,
  pp.~169--203.

\bibitem[Zha14b]{wei-zhang-mazur-tate}
\bysame, \emph{Selmer groups and the indivisibility of {H}eegner points}, Camb.
  J. Math. \textbf{2} (2014), no.~2, 191--253.

\bibitem[Zha21]{wei-zhang-weil-repns-afl}
\bysame, \emph{Weil representation and arithmetic fundamental lemma}, Ann. of
  Math. (2) \textbf{193} (2021), no.~3, 863--978.

\end{thebibliography}

\end{document}